\newcommand{\R}{\mathbb{R}}
\newcommand{\C}{\mathbb{C}}
\newcommand{\Q}{\mathbb{Q}}
\newcommand{\F}{\mathbb{F}}
\newcommand{\Z}{\mathbb{Z}}
\newcommand{\N}{\mathbb{N}}
\newcommand{\ideal}{\vartriangleleft}
\newcommand{\idealinv}{\vartriangleright}
\renewcommand{\H}{\mathbb{H}}
\newcommand\blankpage{%
    \null
    \thispagestyle{empty}%
    \addtocounter{page}{-1}%
    \newpage}
\patchcmd{\l@chapter}{1.0em}{0.5em}{}{}
\title{The Brauer Group of Rational Numbers}   
\author{Haiyu Chen}
\newtheorem{theorem}{Theorem}[section]
\newtheorem{corollary}[theorem]{Corollary}
\newtheorem{lemma}[theorem]{Lemma}
\newtheorem{proposition}[theorem]{Proposition}
\newtheorem{definition}[theorem]{Definition}
\newtheorem*{remark}{Remark}
\newcommand{\defeq}{\vcentcolon=}
\begin{document}

\baselineskip=18pt plus1pt

\setcounter{secnumdepth}{3}
\setcounter{tocdepth}{3}
\hyphenpenalty=10000
\binoppenalty=\maxdimen
\relpenalty=\maxdimen

\maketitle                  

\newpage
\afterpage{\blankpage}
\begin{romanpages}          
\chapter*{Abstract}\addcontentsline{toc}{chapter}{Abstract}\thispagestyle{empty}

    In this project, we will study the Brauer group that was first defined by R. Brauer. The elements of the Brauer group are the equivalence classes of finite dimensional central simple algebra. Therefore understanding the structure of the Brauer group of a field is equivalent to a complete classification of finite dimensional central division algebras over the field. One of the important achievements of algebra and number theory in the last century is the determination of $Br(\Q)$, the Brauer group of rational numbers. The aim of this dissertation is to review this project, i.e., determining $Br(\mathbb{Q})$. 
    
    There are three main steps. The first step is to determine $Br(\mathbb{R})$, the Brauer group of real numbers. The second step is to identify $Br(k_\nu)$, the Brauer group of the local fields. The third step is to construct two maps $Br(\mathbb{Q})\to Br(\mathbb{R})$ and $Br(\mathbb{Q})\to Br(\mathbb{Q}_p)$ and to use these two maps to understand $Br(\mathbb{Q})$. This dissertation completed the first two steps of this enormous project.
    
    To the author's knowledge, in literature there is no document including all the details of determining $Br(\Q)$ and most of them are written from a advanced perspective that requires the knowledge of class field theory and cohomology. The goal of this document is to develop the result in a relatively elementary way. The project mainly follows the logic of the book \cite{jacobson2009}, but significant amount of details are added and some proofs are originated by the author, for example, \ref{1.9}, \ref{1.4.2}(ii), \ref{4.2.6}, and maximality and uniqueness of \ref{5.5.12}.
\newpage
\afterpage{\blankpage}

\chapter*{Acknowledgements}\addcontentsline{toc}{chapter}{Acknowledgements}

I would like to express my deepest gratitude to my supervisor, Prof. Nikolay Nikolov, for his continuous guidance throughout this project. I appreciate him taking time to think about this project that is perfect for my mathematical background while I can also learn new techniques in algebraic number theory and practice those in the course \emph{Non-commutative rings}. This project also familiarize me with Galois theory, algebraic number theory and some metric spaces and I can see my progress in mathematical maturity. Thanks to the guidance of Prof. Nikolov, I am able to overcome various difficulties that I faced for the first time in mathematical research. Apart from that, he always makes this project flexible to me, which has stimulated my greatest potential. His unique perspectives make this project accessible and enjoyable.\\ \par

Finally, I am gratefully indebted to my parents and friends who warmly support me spiritually whenever I need help. I would like to give a special thanks to my friend Deepak who discussed with me how mathematical research was like and give me emotional support when I got stuck.\\ \par

This accomplishment would not have been possible without them. Thank you.

\tableofcontents            
\addtocontents{toc}{\protect\enlargethispage{\baselineskip}}

\chapter*{Introduction}\addcontentsline{toc}{chapter}{Introduction}

    The aim of this dissertation is to study the Brauer group of rational numbers, which is one of the most important achievement in 1930's. The elements of the Brauer group over a base field are equivalent classes of finite dimensional division algebras whose centers coincide with the base field. The determination of the $Br(k)$ is equivalent to classifying all finite dimensional central simple algebras over the base field $k$.

    Roughly speaking, the larger the field, the smaller the Brauer group. For example, in chapter $2$ we will see the Brauer group of an algebraically closed field is trivial. In chapter $3$ we will see the Brauer group of real number is isomorphic to $\Z/2\Z$, the cyclic group of order $2$. It turns out in chapters 4-5 that the Brauer group of local field is isomorphic to $\Q/\Z$. Hence, the Brauer group of rational number is one of the "largest" Brauer groups one may get.
    
    The project of $Br(\Q)$ has three main phases. The first phase is to determine $Br(\R)$. The second phase is to identify $Br(k_\nu)$. The reason that this two steps are important is that the elements of $Br(\Q)$ can be studied via all completions of $\Q$. In some sense the "only" possible completions of $\Q$ are of two kinds, the finite, which is a local field $k_\nu$, or infinite, which is $\R$. Then in the third phase we can combine the results by studying the maps $Br(\Q)\twoheadrightarrow$ $Br(\R)$, and $Br(\Q)\twoheadrightarrow Br(\Q_p)$ and by the exact sequence \cite{stackexchangeBrauerGroup}
    \begin{equation*}
        0\to Br(\Q)\to \Z/2\Z \oplus_\nu \Q/\Z\to \Q/\Z\to 0.
    \end{equation*}
    
    The focus of this dissertation is the calculation of $Br(\R)$ and $Br(k_\nu)$ for local fields $k_\nu$. The first chapter discusses some classical results of finite dimensional central simple algebra. They are representatives of elements in the Brauer group. The second chapter defines the Brauer group, and gives that over algebraically closed field, the Brauer group is trivial. The third chapter calculate $Br(\R)$, as the title indicates. Chapter $4$ introduces cyclic algebras, which play an important role in determination of $Br(k)$ where $k$ is a global or local field. A consequence of Albert-Brauer-Hasse-Neother theorem is that a division algebra of any degree over $k$ is a cyclic algebra \cite{wikiBrauergroup}, or every central simple algebra over an algebraic number field is cyclic \cite{pierce1982associative}. Chapter $5$ introduces valuation theory, which is essential in determining the finite field extensions of local fields, and finally contributes to the determination of the Brauer group of local fields.
    
    As a part of classical field theory, an application of this result is to generalize the reciprocity law. See chapter VII in \cite{cassels1967algebraic}.

    It worths to mention that there are other equivalent definitions of Brauer groups. This dissertation uses a concrete approach to determine the Brauer groups. One of the equivalent ways is to use Galois cohomology. In chapter $4$ (corollary \ref{4.1.12}) we set up an isomorphism $Br(L/k)\cong H^2(G,L^\times).$ In fact, we can express Brauer group in an arbitrary field in terms of Galois cohomology as $$Br(k)\cong H^2(k,G_m)$$ where $G_m$ is the multiplicative group, viewed as an algebraic group over $k$ \cite{wikiBrauergroup}.

\newpage
\chapter*{Notations}
\begin{table}[h]
\centering
    \bgroup
    \def\arraystretch{1.4}
    \begin{tabularx}{\textwidth}{l c X}
         $Z(A)$         &:& The center of an ring $A$.\\
         $M_n(A)$       &:& The Matrix ring of $m\times n$ matrices.\\
         $A^{op}$       &:& The opposite ring $(A,+,\ast_{op})$ of the ring $(A,+,\cdot_{A})$ with multiplication defined by $a\ast_{op} b=b\cdot_{A} a$.\\
         $End({}_A V)$  &:& $A$-linear endomorphisms of V as a \emph{left} $A$-module.\\
         ${}_A V_D$     &:& $V$ is an $A$-$D$ bimodule.\\
         $A\sim B$      &:& $A$ is similar to $B$.\\
         $[A]$          &:& Similarity class of $A$ under $\sim$, or the equivalence class of finite dimensional central simple algebras similar to $A$.\\
         $\H$   &:& The quaternion algebra.\\
         $C_R(S)$       &:& The centralizer of a subset $S\subseteq R$ in ring R, which is 
                            $\{z\in R: \ zs=sz \text{ for all }s\in S\}$\\
         $A\otimes_k B$ &:& If $k$ is the base field of algebra $A$, $B$, it is often written as $A\otimes B$. If $k$ is not the base field, say $k\subseteq L$, then we write $A\otimes_L B$.\\
         $m_a(x)$         &:& The minimal polynomial of $a\in L\setminus k$ in the finite field extension $L/k$.\\
         $Gal(L/k)$     &:& The Galois group of a Galois extension $L/k$.\\
         $L^\times$     &:& The multiplicative group of a field $L$.\\
         $[V:k]$        &:& $dim_k V$, for $k$-vector space $V$.\\
         $[G:H]$        &:& If $H$ is a subgroup of a group $G$, then $[G:H]=|G/H|$ is the number of $H$-cosets of $G$.\\
         $\R_{>0}^\times$&:& The multiplicative group of positive reals. \\
         
    
    
         P.I.D          &:& The principal ideal domain.\\
         U.F.D          &:& The unique factorization domain.\\
         $m|n$          &:& The integer $m$ divides $n$.\\
         $A\defeq B$    &:& $A$ is defined to be equal to $B$.\\
         $R[X]$         &:& The ring of polynomials over a ring $R$.\\
    \end{tabularx}
}\end{table}

\chapter*{Assumptions}
\begin{itemize}
    \item All rings in this dissertation are associative and have an identity element $1$, but are not
necessarily commutative.
    \item Modules are left modules, unless stated otherwise.
    \item Let $k$ be a field, and $A$ be a $k$-algebra, unless stated otherwise.
    \item If $A\cong B$ are isomorphic rings or modules. $S\subseteq A$. Then writing $S\subseteq B$ is an abuse of notation which means the image of $S$ is in $B$ under the isomorphism $A\cong B$, or $S$ embeds into $B$.
    \item We will assume the knowledge of Galois theory in chapter $5$ of \cite{drozd2012finite} and \emph{non-commutative rings} in \cite{non-comm}. Some common results from them may be directly used without citation.
\end{itemize}

\end{romanpages}            

\chapter{The Finite Dimensional Central Simple Algebra}
In this chapter, we will introduce the finite dimensional central simple algebra over a field. They play essential role in the construction of the Brauer group in the next chapter and they will be the focus of this dissertation.
This chapter is mainly based on \cite{jacobson2009}, unless explicitly cited.

\section{Construction from Simple Algebra}

Let $k$ be a field and $A$ be a $k$-algebra.
\begin{definition}[central]\index{central algebra}
A $k$-algebra A is called \emph{central} if its center $Z(A)\cong k$.
\end{definition}

\begin{definition}[primitive]\index{primitive}\index{primitive!ring}\index{primitive!ideal}
Let $I$ be a two-sided ideal of A. Then
\begin{enumerate}[(i)]
  \item The ideal $I$ is \emph{left primitive} if
  \[ I=\emph{Ann}_A(M)=\{x\in A: xM=0\}=\bigcap_{x\in M}\emph{ann(x)} \]
  for some simple left $A$-module $M$;
   \item The ring $A$ is \emph{left primitive} if its zero ideal is left primitive, i.e., $A$ has a faithful simple left module.
  \end{enumerate}
\end{definition}

\begin{lemma} \label{1.3}
A simple ring is left primitive.
\end{lemma}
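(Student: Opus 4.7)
The plan is to exhibit a faithful simple left $A$-module explicitly by passing to a quotient by a maximal left ideal, and then use simplicity of $A$ to force the annihilator of that quotient to vanish.

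First, I would invoke Zorn's lemma on the poset of proper left ideals of $A$, ordered by inclusion; this poset is nonempty (it contains $0$) and every chain has an upper bound, namely its union, which is again a proper left ideal because the identity $1$ is not contained in any member of the chain. This yields a maximal left ideal $M \ideal A$, and then $A/M$ is a simple left $A$-module in the standard way: any nonzero left $A$-submodule of $A/M$ pulls back to a left ideal strictly containing $M$, hence equals $A$ by maximality, so the submodule is all of $A/M$.

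Next, I would set $I \defeq \text{Ann}_A(A/M) = \{x \in A : xA \subseteq M\}$ and check that $I$ is a two-sided ideal of $A$: it is visibly a left ideal, and if $b \in I$ and $a \in A$, then for every $x \in A$ we have $(ba)x = b(ax) \in M$, so $ba \in I$. Now because $A$ is simple, $I$ is either $0$ or $A$. The latter is impossible: if $I = A$ then $1 \in I$, which would give $1 \cdot (1 + M) = 1 + M = 0$ in $A/M$, contradicting that $M$ is a proper ideal. Therefore $I = 0$, so $A/M$ is a faithful simple left $A$-module, which by the definition of left primitive means $A$ is left primitive.

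There is no real obstacle here; the only subtle point is making sure to verify that $\text{Ann}_A(A/M)$ is two-sided (not merely a left ideal), which is needed before applying simplicity of $A$. The argument requires no finite-dimensionality or Artinian hypothesis, only the existence of the identity element, which guarantees the Zorn's lemma step.
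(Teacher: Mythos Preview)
Your proof is correct and follows essentially the same approach as the paper: take a maximal left ideal $M$, form the simple module $A/M$, and use simplicity of $A$ to force $\text{Ann}_A(A/M)=0$. You simply fill in more detail (explicit Zorn's lemma, verification that the annihilator is two-sided) than the paper's terse version.
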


\begin{proof}
Let $A$ be a simple ring. Let $M$ be a maximal left ideal in $A$. Then $A/M$ is a simple left $A$-module. Take $I=\emph{Ann}_A(A/M)$ which is a two-sided ideal in $A$. $I\neq A$ since $A$ acts non-trivially on $A/M$. Hence $I=0$. $A$ is left primitive.
\end{proof}
 
\begin{theorem}[Artin-Wedderburn]\label{1.4}
Let A be a left primitive, left Artinian ring. Then $A\cong M_n(D)$ for some division ring $D$ and some integer $n\geq 1$.
\end{theorem}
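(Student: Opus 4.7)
The plan is to realise $A$ as the full matrix algebra of a faithful simple module over a division ring, by combining Schur's lemma, the Jacobson Density Theorem, and the descending chain condition. Since $A$ is left primitive, by definition there exists a faithful simple left $A$-module $M$; in particular the structure map $\rho : A \to End(M)$ (as abelian group) is injective. By Schur's lemma applied to the simple module $M$, the ring $D := End({}_A M)$ is a division ring. Letting $D$ act on the right makes $M$ into a right $D$-module, and since elements of $A$ commute with $D$-actions by construction, $\rho$ factors through $End(M_D)$.

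Next I would invoke (or establish) the Jacobson Density Theorem for $A$ acting on $M$: for every $D$-linearly independent tuple $m_1, \ldots, m_n \in M$ and every $m'_1, \ldots, m'_n \in M$, there exists $a \in A$ with $a m_i = m'_i$ for all $i$. The standard proof is by induction on $n$, exploiting $Am = M$ for all nonzero $m$ (simplicity of $M$) and using the $D$-structure to rule out the degenerate case where the inductively found annihilator already kills $m_n$.

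With density in hand, the left Artinian hypothesis forces $[M : D] < \infty$. Indeed, if $m_1, m_2, \ldots$ were an infinite $D$-linearly independent sequence in $M$, then the left ideals
\begin{equation*}
    L_n \;\defeq\; \{\, a \in A : a m_i = 0 \text{ for } i = 1, \ldots, n \,\}
\end{equation*}
would satisfy $L_1 \supsetneq L_2 \supsetneq \cdots$: density supplies, for each $n$, an element of $L_{n-1} \setminus L_n$, namely some $a$ sending $m_1, \ldots, m_{n-1}$ to $0$ but $m_n$ to a nonzero target. This contradicts the descending chain condition, so $M$ has finite dimension, say $n$, over $D$.

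Finally, finite dimensionality upgrades density to surjectivity of $\rho : A \to End(M_D)$, and injectivity was already noted from faithfulness; thus $A \cong End(M_D) \cong M_n(D^{op})$. Since $D^{op}$ is itself a division ring, renaming gives $A \cong M_n(D)$ as required. The principal obstacle is the density theorem, which does all the real work; once density is in place, the transition through the descending chain condition to an honest ring isomorphism is essentially a dimension count.
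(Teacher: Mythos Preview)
Your proposal is correct and follows the standard route via Schur's lemma and the Jacobson density theorem, both of which the paper states explicitly later in the same chapter; the paper itself does not prove this theorem in-text but simply defers to a reference, so there is no in-paper argument to compare against, and your outline is precisely what that reference would supply. One cosmetic remark: once $D = End({}_A M)$ is made to act on the right (as both you and the paper's conventions do), one already has $End(M_D) \cong M_n(D)$ rather than $M_n(D^{op})$, so the final renaming step is unnecessary, though of course harmless.
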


\begin{proof}
One can find a proof in many classical texts, or \cite{non-comm}.
\end{proof}
 
\begin{lemma}
    Let $A$ be a ring. Let $M_n(A)$ is the $n\times n$ matrix with coefficients in $A$. Then
    \begin{enumerate}[(i)]
        \item The ideals of $M_n(A)$ are of the form $M_n(I)$ where $I\ideal A$, and $M_n(A)$ is simple if and only if $A$ is simple;
        \item The centers $Z(M_n(A))=Z(A)$, and $M_n(A)$ is central if and only if $A$ is central.
    \end{enumerate}
\end{lemma}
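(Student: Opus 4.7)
The plan is to use the matrix units $E_{ij}$, satisfying $E_{ij}E_{kl}=\delta_{jk}E_{il}$, as the sole computational tool throughout. Every element of $M_n(A)$ writes uniquely as $\sum_{i,j} b_{ij}E_{ij}$ with $b_{ij}\in A$, and left/right multiplication by matrix units is what lets us shuttle single entries between arbitrary positions, which is the whole point of the argument.

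For part (i), I would associate to any two-sided ideal $J\ideal M_n(A)$ the set $I:=\{a\in A:aE_{11}\in J\}$. Closure under addition is immediate, and closure under left and right multiplication by elements of $A$ comes from $(rE_{11})(aE_{11})=raE_{11}$ and $(aE_{11})(rE_{11})=arE_{11}$, giving $I\ideal A$. The equality $J=M_n(I)$ then follows from two short identities: if $a\in I$, then $aE_{ij}=E_{i1}(aE_{11})E_{1j}\in J$ for every $i,j$, so $M_n(I)\subseteq J$; and if $B=(b_{ij})\in J$, then $b_{ij}E_{11}=E_{1i}BE_{j1}\in J$ shows each $b_{ij}\in I$, so $J\subseteq M_n(I)$. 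It is then easy to check that $I\mapsto M_n(I)$ and $J\mapsto \{a:aE_{11}\in J\}$ are mutually inverse inclusion-preserving bijections between ideals of $A$ and ideals of $M_n(A)$, from which the simplicity equivalence drops out immediately.

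For part (ii), the strategy is to take an arbitrary $B=(b_{ij})\in Z(M_n(A))$ and impose the relations $BE_{kl}=E_{kl}B$ for every pair $(k,l)$. A direct computation with the defining relation gives $(BE_{kl})_{ij}=b_{ik}\delta_{jl}$ and $(E_{kl}B)_{ij}=\delta_{ik}b_{lj}$, so $BE_{kl}$ is non-zero only in column $l$ (as the $k$-th column of $B$) while $E_{kl}B$ is non-zero only in row $k$ (as the $l$-th row of $B$). Matching entries forces $b_{ik}=0$ whenever $i\neq k$ and $b_{kk}=b_{ll}$ for all $k,l$, so $B=bI_n$ for a single $b\in A$. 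Finally, commuting $bI_n$ with $aE_{11}$ for arbitrary $a\in A$ gives $ab=ba$, so $b\in Z(A)$; conversely every $bI_n$ with $b\in Z(A)$ is plainly central. This identifies $Z(M_n(A))$ with $Z(A)$ via $b\leftrightarrow bI_n$, and the centrality equivalence over $k$ transfers at once.

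I do not anticipate any serious obstacle: the whole proof is a drill in manipulating matrix units. The only point that needs care is the entry-comparison in (ii), where one must use the commutation relations $BE_{kl}=E_{kl}B$ for \emph{all} pairs $k\neq l$ (not merely $k=l$) to force $B$ to be a scalar multiple of $I_n$, rather than just block-diagonal or diagonal with possibly unequal entries.
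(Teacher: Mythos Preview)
Your proof is correct and complete. The paper itself omits the argument entirely, writing only that ``this can be done via direct checking'' and pointing to Lam's textbook; your matrix-unit computation is exactly the standard direct verification the paper is gesturing at, so there is nothing further to compare.
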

\begin{proof}
  This can be done via direct checking, which we would omit here. One can find a proof in many classical texts, or p.463 and p.473 of \cite{lam1999lectures}.
\end{proof}
 
We are now ready to construct finite dimensional central simple algebra from a finite dimensional simple algebra. Let $A$ be a finite dimensional simple algebra. Since $A$ is finite dimensional, $A$ is left Artinian. By lemma \ref{1.3}, $A$ is left primitive, so $A\cong M_n(D)$ for some division algebra $D$. Let \[ k\defeq Z(A)\cong Z(M_n(D))\cong Z(D) \]
which is a field. We can regard $A$ as a finite dimensional central simple algebra over $k$.

\section{Tensor Product of Algebras}
    \begin{definition}[tensor product of algebra]\index{tensor product of algebra}
    Let $k$ be a field. $A,B$ are $k$-algebras. The \emph{tensor product of $k$-algebra $A,B$} is the tensor product $A\otimes_k B$ of $k$-modules $A,B$ with product defined by
    \[ (a_1\otimes b_1)(a_2\otimes b_2)\defeq a_1 a_2 \otimes b_1 b_2\]
    and extends by linearity to all of $A\otimes_k B$. Denote $A\otimes_k B$ for tensor product of algebras if $A$ and $B$ are both $k$-algebras.
    
    \end{definition}
    
    $A\otimes_k B$ is a $k$-algebra with identity given by $1_A \otimes 1_B$ where $1_A$, $1_B$ are the identities of $A$ and $B$.
    
    Similar to that of modules, the tensor product of algebras is characterized by the following universal property.

    \begin{proposition}\label{univ}
      Let $A,B,C$ be a $k$-algebra. Let $e_1, e_2$ be the homomorphisms $e_1:A\to A\otimes_k B$, $x\mapsto x\otimes 1_B$ and $e_2:B\to A\otimes_k B$, $y\mapsto 1_A\otimes y$. We have
      \[ (x\otimes 1_B)(1_A\otimes y)=x\otimes y=(1_A \otimes y)(x\otimes 1_B)         \]
      i.e., 
      \begin{equation}\label{1.6.1} 
          e_1(x)e_2(y)=e_2(y)e_1(x) ,\quad   x \in A, y \in B. 
      \end{equation}
      \[ \]
      Then there is a unique algebra homomorphism (canonical homomorphism)  $f:A\otimes_k B \to C$ such that $fe_i=f_i, \quad i=1,2.$
    \end{proposition}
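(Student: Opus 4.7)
The plan is to invoke the universal property of the tensor product of $k$-modules and then upgrade the resulting $k$-linear map to an algebra homomorphism, with multiplicativity forced by the commutativity hypothesis \eqref{1.6.1}. I will assume, as is implicit in the statement, that we are given $k$-algebra homomorphisms $f_1 \colon A \to C$ and $f_2 \colon B \to C$ whose images commute elementwise in $C$, i.e., $f_1(x)f_2(y)=f_2(y)f_1(x)$ for all $x \in A$, $y \in B$.

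First I would define the map $\varphi \colon A \times B \to C$ by $\varphi(a,b) = f_1(a)f_2(b)$. Since $f_1, f_2$ are $k$-linear and $C$ is a $k$-algebra (so scalars slide freely through products), $\varphi$ is $k$-bilinear. The universal property of $A \otimes_k B$ as a $k$-module then yields a unique $k$-linear map $f \colon A \otimes_k B \to C$ with $f(a \otimes b) = f_1(a)f_2(b)$ on elementary tensors. This already settles uniqueness: any algebra homomorphism $f'$ satisfying $f'e_i = f_i$ must agree with $f$ on each $a \otimes b = e_1(a)e_2(b)$, and by $k$-linearity agrees with $f$ everywhere.

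Next I would verify that $f$ is an algebra homomorphism. Preservation of the identity is immediate: $f(1_A \otimes 1_B) = f_1(1_A)f_2(1_B) = 1_C \cdot 1_C = 1_C$. The key point is multiplicativity on elementary tensors:
\begin{equation*}
f\bigl((a_1 \otimes b_1)(a_2 \otimes b_2)\bigr) = f(a_1 a_2 \otimes b_1 b_2) = f_1(a_1)f_1(a_2)f_2(b_1)f_2(b_2),
\end{equation*}
while
\begin{equation*}
f(a_1 \otimes b_1)\,f(a_2 \otimes b_2) = f_1(a_1)f_2(b_1)f_1(a_2)f_2(b_2).
\end{equation*}
Equality of these two expressions reduces exactly to $f_1(a_2)f_2(b_1) = f_2(b_1)f_1(a_2)$, which is the commutativity hypothesis on the images. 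Extending to arbitrary elements by bilinearity then gives full multiplicativity.

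Finally I would check the commutation with the structural maps: $f e_1(x) = f(x \otimes 1_B) = f_1(x)f_2(1_B) = f_1(x)$, and symmetrically $f e_2 = f_2$. The main subtlety is really the multiplicativity check, since it is precisely here that the hypothesis \eqref{1.6.1} enters; everything else is a routine application of the module-level universal property. No additional obstacles arise because the elements of the form $a \otimes b$ generate $A \otimes_k B$ as a $k$-module, so verifying multiplicativity on pure tensors suffices.
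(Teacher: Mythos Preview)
Your proposal is correct and follows essentially the same approach as the paper: define the bilinear map $(a,b)\mapsto f_1(a)f_2(b)$, pass to the tensor product to get a $k$-linear $f$, then verify preservation of $1$ and multiplicativity on pure tensors using the commutation hypothesis, and finally check $fe_i=f_i$. You also correctly make explicit the implicit hypothesis that $f_1,f_2$ are given algebra maps with elementwise-commuting images in $C$, which the statement leaves unstated.
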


    \begin{center}
        \begin{tikzcd}
            A 
            \arrow{r}{e_1}
            \arrow{rd}[swap]{f_1}
            
            &A\otimes B  
            \arrow[dotted]{d}{f}
            \arrow[leftarrow]{r}{e_2}
            
            &B 
            \arrow{ld}{f_2}\\
            
            &C
        \end{tikzcd}
    \end{center}
    
    \begin{proof}
        Define $\tilde{f}:A\times B \to C$ by $(x,y)\mapsto f_1(x)f_2(y)$. This is $k$-bilinear so we have a $k$-module homomorphism $f:A\otimes B\to C$ induced by $\tilde{f}$. It remains to check that $f$ is a $k$-algebra homomorphism. Since every element of $A\otimes B$ is of the form $\sum x\otimes y$, it suffices to check it on $x\otimes y$.
        \[ f(1)=f(1_A \otimes 1_B) = f_1(1_A)f_2(1_B)=1\cdot 1=1\] and 
        \begin{align*}
          f((x_1\otimes y_1)(x_2 \otimes y_2))&=f(x_1 x_2 \otimes y_1 y_2)\\
                                              &=f_1(x_1 x_2) f_2(y_1 y_2)\\
                                              &=f_1(x_1)f_1(x_2)f_2(y_1)f_2(y_2)\\
                                              &=f_1(x_1)f_2(y_1)f_1(x_2)f_2(y_2)\\
                                              &=f(x_1\otimes y_1)f(x_2 \otimes y_2).
        \end{align*}
        In addition, we have
        \[ fe_1(x)=f(x\otimes 1_B)=f_1(x) \]
        \[ fe_2(y)=f(1_A \otimes y)=f_2(y). \]
        By relation \eqref{1.6.1}, we can see the homomorphism is unique, similar to the proof of tensor product of $k$-modules.
    \end{proof}

    \begin{proposition}\label{1.7}
        Let $A,B$ be $k$-subalgebras of a $k$-algebra $D$, then $D\cong A\otimes_k B$ if
        \begin{enumerate}[(i)]
            \item $ab=ba$ for all $a\in A, b\in B$;
            \item For a $k$-basis $\{x_\alpha\}_{\alpha \in I}$ of $A$, every $d\in D$ can be written in a unique way as $\sum_i x_{\alpha_i}b_i$ for some $\alpha_i \in I, b_i\in B.$ 
        \end{enumerate}
    \end{proposition}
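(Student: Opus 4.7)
The plan is to apply the universal property established in Proposition \ref{univ} to the inclusion homomorphisms $f_1\colon A\hookrightarrow D$ and $f_2\colon B\hookrightarrow D$. Hypothesis (i) gives $f_1(x)f_2(y)=xy=yx=f_2(y)f_1(x)$, which is exactly the commutation requirement \eqref{1.6.1}. Hence there is a unique $k$-algebra homomorphism $f\colon A\otimes_k B\to D$ with $f(a\otimes b)=ab$, and the whole problem reduces to showing that $f$ is a bijection.

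For surjectivity, pick an arbitrary $d\in D$. By hypothesis (ii), $d=\sum_i x_{\alpha_i}b_i$ for some finite sum, and then $d=f\bigl(\sum_i x_{\alpha_i}\otimes b_i\bigr)$, so $f$ is onto.

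For injectivity, I would first put a general element of $A\otimes_k B$ into a canonical form using the chosen basis $\{x_\alpha\}$ of $A$. Since $A=\bigoplus_{\alpha} k\cdot x_\alpha$ as a $k$-module, extension of scalars gives
\[
A\otimes_k B\;\cong\;\bigoplus_{\alpha}(k\cdot x_\alpha)\otimes_k B\;\cong\;\bigoplus_{\alpha} B,
\]
so every element of $A\otimes_k B$ has a \emph{unique} expression $\sum_\alpha x_\alpha\otimes b_\alpha$ with finitely many nonzero $b_\alpha\in B$. If such an element lies in $\ker f$, then $\sum_\alpha x_\alpha b_\alpha=0$ in $D$, and the uniqueness clause of (ii) forces $b_\alpha=0$ for every $\alpha$, so the element was already zero in $A\otimes_k B$.

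The only subtle point, and thus the main (mild) obstacle, is the canonical-form step: one has to be careful that the uniqueness of coordinates in $A\otimes_k B$ and the unique representation in $D$ are indexed by the same basis $\{x_\alpha\}$, which is precisely why hypothesis (ii) is stated relative to a chosen $k$-basis of $A$ rather than abstractly. Once this matching is in place, both surjectivity and injectivity follow directly from the universal property and the free-module computation above, with no further technicalities.
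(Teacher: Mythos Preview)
Your proof is correct and essentially the same as the paper's: both hinge on the decomposition $A\otimes_k B\cong\bigoplus_\alpha(kx_\alpha\otimes B)\cong\bigoplus_\alpha B$ to match unique coordinates in $A\otimes_k B$ with the unique representation in $D$ from hypothesis (ii). The only cosmetic difference is direction: the paper defines $\phi:D\to A\otimes_k B$, $\sum x_{\alpha_i}b_i\mapsto\sum x_{\alpha_i}\otimes b_i$, and checks by hand that it is a well-defined algebra homomorphism, whereas you go the other way and invoke Proposition~\ref{univ} to get $f:A\otimes_k B\to D$ as an algebra homomorphism for free---a slightly cleaner route, since well-definedness and multiplicativity come automatically from the universal property.
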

        \begin{proof}
        By $(ii)$, we may define a map $\phi:D\to A\otimes_k B$, $ \sum_i x_{\alpha_i}b_i\mapsto \sum_i x_{\alpha_i}\otimes b_i$ and it linearly extends to $\sum a_ib_i\mapsto \sum a_i\otimes b_i$.
        Then
        $$\phi(ab\cdot a'b')\stackrel{(i)}{=}\phi(aa'bb')=aa'\otimes bb'=(a\otimes b)(a'\otimes b')=\phi(ab)\phi(a'b')\quad for\ a,a'\in A,\ b,b'\in B$$
        by $(i)$. Hence $\phi$ is an algebra homomorphism.
        
        $\phi$ is obviously surjective. For injectivity, note that every elements of $A\otimes B$ can be uniquely written as finite sum $\sum_j x_{\alpha_j}\otimes b_j$ for $x_{\alpha_j}\in \{x_\alpha\}_{\alpha\in I}$, $b_j\in B$. 
        
        Indeed, by the natural homomorphisms $$A\otimes B \cong(\oplus_{\alpha\in I}kx_\alpha)\otimes B\cong \oplus_{\alpha\in I}(kx_\alpha\otimes B),$$ the sum $\sum_j x_{\alpha_j}\otimes b_j=0$ forces  $x_{\alpha_j}\otimes b_j=0$ for all $j$. Then $kx_\alpha\otimes B\cong k\otimes B\cong B$ implies $x_{\alpha_j}\otimes b_j=0$ if and only if $b_j=0$, so $\sum_j x_{\alpha_j}b_j=0$. Hence $\phi$ is bijective.
    \end{proof}
    
    \begin{remark}
        If $[D:k]<\infty$, then $D=AB$ and $[D:k]=[A:k]\cdot [B:k]$ implies $(ii)$.
    \end{remark}
    \begin{proposition}\label{1.8}
    A is a $k$-algebra.
    \begin{enumerate}[(i)]
        \item $M_n(A) \cong M_n(k)\otimes_k A$;
        \item $M_m(k)\otimes M_n(k)\cong M_{mn}(k)$.
    \end{enumerate}
    \end{proposition}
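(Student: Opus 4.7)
The plan is to prove (i) as a direct application of Proposition~\ref{1.7}, and then deduce (ii) from (i) together with the standard matrix identification $M_n(M_m(k)) \cong M_{mn}(k)$.

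For (i), I would identify two commuting $k$-subalgebras of $M_n(A)$. Let $A_1$ be the image of $M_n(k)$ in $M_n(A)$ under the natural embedding $e_{ij} \mapsto e_{ij}$ (matrix units with entries in $k \subseteq A$), and let $A_2$ be the image of $A$ under the embedding $a \mapsto a I_n$ (scalar matrices with entries $a$ on the diagonal). To check condition (i) of Proposition~\ref{1.7}, I would directly compute $e_{ij} \cdot (a I_n) = a e_{ij} = (a I_n) \cdot e_{ij}$ entry-wise; this works because the entries of $e_{ij}$ lie in $k$, which commutes with every $a \in A$. For condition (ii) of Proposition~\ref{1.7}, take the $k$-basis $\{e_{ij}\}_{1 \le i,j \le n}$ of $A_1$. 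Any matrix $M \in M_n(A)$ has a unique expression $M = \sum_{i,j} a_{ij} e_{ij}$ with $a_{ij} \in A$, and using the commutation just verified this rewrites uniquely as $M = \sum_{i,j} e_{ij}(a_{ij} I_n)$, which is the required form with coefficients in $A_2$. Proposition~\ref{1.7} then yields $M_n(A) \cong M_n(k) \otimes_k A$.

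For (ii), I would specialize (i) to $A = M_m(k)$ to obtain
\[
    M_n(k) \otimes_k M_m(k) \;\cong\; M_n\bigl(M_m(k)\bigr).
\]
It then suffices to recognize that $M_n(M_m(k)) \cong M_{mn}(k)$ via the standard block-matrix identification: an $n \times n$ matrix whose entries are $m \times m$ blocks over $k$ is the same data as an $mn \times mn$ matrix over $k$, and this identification preserves addition and multiplication (this is a standard fact, or can be verified directly on matrix units).

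I expect the only subtlety to be verifying uniqueness in the basis condition of Proposition~\ref{1.7}, but this is essentially automatic here because $\{e_{ij}\}$ already serves as a free basis for $M_n(A)$ as a right $A$-module under the identification $a_{ij} e_{ij} = e_{ij}(a_{ij} I_n)$. Everything else reduces to the fact that $k \subseteq Z(A)$, which is what lets scalar matrices over $A$ commute with matrix units over $k$.
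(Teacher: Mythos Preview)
Your proof of (i) is correct and essentially identical to the paper's: both identify the commuting subalgebras $M_n(k)$ (via matrix units) and $A$ (via scalar matrices $aI_n$) inside $M_n(A)$, verify the hypotheses of Proposition~\ref{1.7}, and conclude.

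For (ii) your argument is also correct but takes a different route from the paper. You specialize (i) to $A = M_m(k)$ and then invoke the block-matrix identification $M_n(M_m(k)) \cong M_{mn}(k)$. The paper instead makes a second, independent application of Proposition~\ref{1.7}: it explicitly embeds $M_m(k)$ into $M_{mn}(k)$ as block-diagonal matrices $I_n(a)$ and embeds $M_n(k)$ via $b \mapsto bI_m$ (replacing each entry $b_{ij}$ by the scalar block $b_{ij}I_m$), checks these commute, and verifies by hand that every standard basis element $c_{uv}$ of $M_{mn}(k)$ arises uniquely as a product. Your approach is more economical since it reuses (i) rather than repeating the machinery; the paper's approach is more self-contained in that it never appeals to the ring isomorphism $M_n(M_m(k)) \cong M_{mn}(k)$ as a black box, but at the cost of a somewhat fiddly index computation.
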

    
    \begin{proof}
        $(i)$. 
         Let $\{e_{ij}\}_{1\leq i,j \leq n}$ be a basis for $M_n(k)$, where $e_{ij}$ is the matrix with $(i,j)$-entry 1, and other entries 0. Write $AI=\{aI: a \in A\}$ where 
         $aI = \left(
         \begin{smallmatrix}
         a &&&\\ &a&& \\&&\ddots &\\ &&&a 
         \end{smallmatrix} \right) $
         in $ Z(M_n(A))$. 
         Therefore $e_{ij}(aI)=(aI)e_{ij}$ for all $aI\in AI$, $e_{ij}\in M_n(k)$. $\{e_{ij}\}$ is an $A$-basis for $M_n(A)$. Every element of $M_n(A)$ can be written uniquely in the form $\sum_{k=1}^s a_k I e_{\alpha_k}$ where $e_{\alpha_k}\in \{e_{ij}\}_{1\leq i,j\leq n}$, $a_k \in A$. By \ref{1.7} we have $M_n(A) \cong M_n(k)\otimes_k A$.
         
         $(ii)$ Viewing $A=M_m(k)$ as a subalgebra of $M_{mn}(k)$ by considering the block matrix
         \[I_n(a)\defeq 
            \begin{pmatrix} 
            a &&&\\ &a&& \\&&\ddots &\\ &&&a 
            \end{pmatrix} \in M_{mn}(k)
         \] where $a\in A$ is a $m\times m$ block matrix and we have $n\times n$ such blocks with $a$ along the diagonal, $0$ elsewhere.
         
         Viewing $B=M_n(k)$ as a subalgebra of $M_{mn}(k)$ by considering the block matrix for every $b$. Define
         \[ bI_m \defeq 
         \begin{pmatrix}
              b_{11} I_m & \cdots & b_{1n} I_m \\
               \vdots & \ddots & \vdots \\
               b_{n1} I_m & \cdots & b_{nn} I_m
         \end{pmatrix},\ b=(b_{ij})_{1\leq i,j \leq n}\in B,\ b_{ij}\in k
         \]
         where $b_{ij}I_m$ is a $m\times m$ block with $b_{ij}$ along the diagonal and $0$ elsewhere.
         
         It is straightforward to check $BI_m=\{bI_m: \ b\in B\}$ is a $k$-subalgebra of $M_{mn}(k)$.
         
         By block matrix multiplication, one can verify 
         \begin{equation}\label{1.2.4.1}
             I_n(a)\cdot (bI_m)=(bI_m)\cdot I_n(a)=
         \begin{pmatrix}
              ab_{11} I_m & \cdots & ab_{1n} I_m \\
               \vdots & \ddots & \vdots \\
               ab_{n1} I_m & \cdots & ab_{nn} I_m
         \end{pmatrix}
         \end{equation}
         Let $\{c_{uv}\}_{1\leq u,v \leq mn}$ be the standard $k$-basis of $M_{mn}(k)$ with each $c_{uv}$ having $(u,v)$-entry $1$ and $0$ elsewhere. Write $u=gm+x$, $v=hm+y$. Comparing $c_{uv}$ with (\ref{1.2.4.1}) gives 
         $b_{ij}=
               \left\{
                    \begin{array}{lll}
                          1& if\ (i,j)=(g,h) &\\
                        0& otherwise& 
                    \end{array} 
                \right. 
            .$ Take $b=(b_{ij})$ be this matrix in $B$. $a$ is a matrix with $(x,y)$-entry $1$ and $0$ elsewhere. Every $c_{uv}$ corresponds to a pair $(a,b)$. The solution is unique, and exist for all basis element of $M_{mn}(k)$, hence all of $M_{mn}(k)$.
            
        By proposition \ref{1.7} we have $M_m(k)\otimes M_n(k)\cong M_{mn}(k)$.
    \end{proof}

    \begin{lemma}\label{1.14}
        Let R be a finite dimensional simple $k$-algebra. $M_1$, $M_2$ are two irreducible $R$-modules. Then $M_1\cong M_2$.
    \end{lemma}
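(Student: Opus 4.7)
The plan is to reduce to Artin--Wedderburn (Theorem \ref{1.4}) and then exhibit a single model for the simple module that every irreducible must match. Since $R$ is finite dimensional over $k$, it is left Artinian, and by Lemma \ref{1.3} a simple ring is left primitive, so Theorem \ref{1.4} yields $R\cong M_n(D)$ for some division ring $D$ and integer $n\geq 1$. My goal will be to show that $V\defeq D^n$, the space of column vectors acted on by matrix multiplication, is (up to isomorphism) the unique simple left $R$-module, and then both $M_1$ and $M_2$ must be isomorphic to $V$.

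The first step is to check that $V$ is indeed irreducible: given any nonzero $v\in V$, one can choose a matrix in $M_n(D)$ that sends $v$ to any other prescribed vector (using elementary row operations realized as left multiplications), so $Rv=V$ and $V$ has no proper nonzero submodules. The second step is to decompose the regular representation: writing $e_{ii}$ for the matrix unit with $1$ in position $(i,i)$ and zeros elsewhere, we have
\[
R \;=\; M_n(D) \;=\; \bigoplus_{i=1}^{n} R e_{ii},
\]
and each $Re_{ii}$ consists of matrices supported on the $i$-th column, so the natural map $Re_{ii}\to V$ sending a matrix to its $i$-th column is an isomorphism of left $R$-modules. In particular, each $Re_{ii}$ is simple and all of them are isomorphic to $V$.

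The third step handles an arbitrary irreducible $M$. Pick $0\neq m\in M$; since $M$ is simple we have $M=Rm$, and the map $\varphi:R\to M$, $r\mapsto rm$, is a surjection of left $R$-modules. Restricting $\varphi$ to the summands, $\varphi|_{Re_{ii}}:Re_{ii}\to M$ must be nonzero for at least one $i$ (otherwise $\varphi$ itself would be zero). For such an $i$, both $Re_{ii}$ and $M$ are simple, so Schur's lemma (any nonzero homomorphism between simple modules is an isomorphism) forces $Re_{ii}\cong M$, whence $M\cong V$. Applying this to $M=M_1$ and $M=M_2$ gives $M_1\cong V\cong M_2$, as required.

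The argument is essentially bookkeeping once Artin--Wedderburn is invoked; the one point that deserves care is the decomposition $R=\bigoplus_i Re_{ii}$ and the identification of $Re_{ii}$ with $V$, since this is what lets us conclude that any cyclic simple quotient of $R$ is isomorphic to $V$. I do not anticipate a substantive obstacle.
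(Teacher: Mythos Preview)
Your argument is correct, but it takes a more structural route than the paper's. The paper does not invoke Artin--Wedderburn at all: it simply picks a minimal nonzero left ideal $I\subseteq R$ (which exists because $R$ is finite dimensional), observes that the action of $R$ on any irreducible $M$ is faithful (the annihilator is a two-sided ideal, hence zero since $R$ is simple), so $IM\neq 0$, and then any $x\in M$ with $Ix\neq 0$ gives a nonzero $R$-map $I\to M$, $r\mapsto rx$, which is an isomorphism by simplicity of $I$ and $M$. Your proof reaches the same endgame---a nonzero map from a simple left ideal of $R$ into $M$---but you first pass through $R\cong M_n(D)$ and the explicit column decomposition $R=\bigoplus_i Re_{ii}$ to \emph{name} that simple left ideal as $D^n$. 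The paper's approach is leaner and avoids the classification theorem entirely; yours has the advantage of making the unique simple module completely explicit, which can be handy downstream (e.g.\ in dimension counts), at the cost of importing a heavier result than is strictly needed here.
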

    \begin{proof}
    Choose a minimal non-zero left ideal $I\ideal R$. $I$ is an irreducible $R$-module. Let $M$ be any irreducible $R$-module. It suffices to show $M\cong I$ as $R$-module. Regard $M$ as an $R$-representation by the map $\phi:R\to End_k(M)$ where $ker(\phi)$ is a two-sided ideal in $R$. $R$ is simple so the representation is faithful. Since $I\neq 0$, there exist $x\in M$ such that $Ix\neq 0$. Therefore the homomorphism $\psi_x:I\to M$, $r\mapsto rx$ is not a zero homomorphism. $M$ is irreducible $R$-module, so $\psi_x$ is an isomorphism. $M\cong I$ as claimed.
    \end{proof}

    \begin{theorem}\label{1.9}
        If $M_m(D_1) \cong M_n(D_2)$ for some division algebra $D_1, D_2$ and $m,n\geq 1$. Then $m=n$ and $D_1\cong D_2$.
    \end{theorem}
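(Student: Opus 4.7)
The plan is to recover both the division ring and the matrix size as module-theoretic invariants of the common underlying ring. Let $R$ denote the ring obtained from either side of the isomorphism, so that both $M_m(D_1)$ and $M_n(D_2)$ are realizations of $R$. First, I would extract a distinguished simple $R$-module: in $M_m(D_1)$ the matrices supported on a single column form a minimal left ideal isomorphic to the column space $V_1 \defeq D_1^m$, which is therefore a simple $M_m(D_1)$-module; likewise $V_2 \defeq D_2^n$ is a simple $M_n(D_2)$-module. Transported along the isomorphism, both $V_1$ and $V_2$ become simple $R$-modules, and since $R$ is simple and finite dimensional, Lemma \ref{1.14} forces $V_1 \cong V_2$ as $R$-modules.

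Next, I would compute the endomorphism rings. Right multiplication of column vectors by $d \in D_1$ commutes with the left $M_m(D_1)$-action, producing an injection $D_1^{op} \hookrightarrow End({}_R V_1)$. A direct check using the matrix units $e_{ij}$ shows this is surjective too: any $R$-endomorphism of $V_1$ is determined by its value on a single standard basis column, and compatibility with the $e_{ij}$-action forces that value to be right multiplication by a unique element of $D_1$. Thus $End({}_R V_1) \cong D_1^{op}$, and symmetrically $End({}_R V_2) \cong D_2^{op}$. Since isomorphic $R$-modules have isomorphic endomorphism rings, $D_1^{op} \cong D_2^{op}$, and applying the opposite functor yields $D_1 \cong D_2$.

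Finally, to obtain $m = n$, I would decompose $R$ over itself: as a left $M_m(D_1)$-module, $R$ splits into $m$ copies of $V_1$, one per column, giving $\dim_k R = m \cdot \dim_k V_1$; symmetrically $\dim_k R = n \cdot \dim_k V_2$, where $k$ is the common center. Since $V_1 \cong V_2$ as $R$-modules their $k$-dimensions agree, and $m = n$ follows. The main obstacle is the endomorphism ring computation, where the matrix unit bookkeeping must be done carefully to rule out anything beyond right $D_i$-multiplication; the remaining steps are then just Lemma \ref{1.14} together with a dimension count.
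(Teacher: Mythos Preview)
Your proof is correct and follows essentially the same approach as the paper's: extract the simple module, invoke Lemma~\ref{1.14} to identify the two simple modules, compute their endomorphism rings as $D_i^{op}$, and read off $m=n$ from the decomposition of $R$ into simples. The only cosmetic difference is in the endomorphism computation: you use explicit matrix-unit bookkeeping, whereas the paper argues coordinate-free (if $bs \notin sD_1$ then $s$ and $bs$ are $D_1$-independent, and a suitable $a' \in End(S_{D_1})$ produces a contradiction).
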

    \begin{proof}
    Let $A=M_m(D_1)\cong M_n(D_2)$. By semi-simplicity, $A\cong \oplus_{i=1}^m S_i\cong \oplus_{j=1}^n S'_j$, where $S_j$, $S'_j$ are  simple left $A$-modules, so all of them are isomorphic, by \ref{1.14}. We can regard $S=S_i=D_1^m$, $S'=S_j'=D_2^n$ and $A=End(S_{D_1})$. $A$ acts transitively on non-zero elements of $S$ because $D_1$ is a division algebra.
    
    Now if we show $End(_A S)\cong D_1^{op}$ we are done because $$D_1\cong D_1^{op}\cong End(_A S)\cong End(_A S') \cong D_2^{op}\cong D_2,$$ and then we have $m=n$. It remains to show 
    \begin{equation}\label{1.2.6.1}
        End(_A S)\cong D_1^{op}.
    \end{equation}
    Take $b\in End(_A S)\subseteq End(S)$, where $End(S)$ is the endomorphism ring of $S$ as an abelian group. $b$ is $A$-linear so for all $a\in A$, $$a(b\cdot s)=b\cdot (a\cdot s)\quad for\ all\ s\in S.$$ Therefore we have $ab=ba$ for all $a\in A$ in $End(S)$. $b$ is a centralizer of $A=End(S_{D_1})$ in $End(S)$.
    
    Take $b\in End(_A S)$. Fix any $0\neq s\in S$. We claim that $bs\in sD_1$. If not, $bs\notin sD_1$, then $s$ and $bs$ are $D_1$-linearly independent. There is an $D_1$-linear map $a'\in End(S_{D_1})$ sending $s$ to $0$ and $bs$ to $1$, but $$1=a'(bs)\stackrel{\eqref{1.2.6.1}}{=}b(a's)=b\cdot 0=0.$$ Now the claim holds.
    
    For this $b$, we have $bs=sd$ for some $d\in D_1$. Take another $0\neq t\in S$, then we have $bt=td'$ for some $d'$ and $s=ct$ for some $c\in A$. Then $$bs=bct=c(bt)=c(td')=(ct)d'=sd'$$ therefore $d=d'$. Therefore for every $b\in End(_A S)$, there exist $d\in D$ such that $bs=sd$ for all $s\in S$.
    
    For uniqueness of $d$, say $sd=sd'$. Take $s=(1_{D_1}, 0, 0, \cdots,0)$ gives $d=d'$.
    
    It is then obvious that $$(b_1b_2)s=b_1(b_2s)=(b_2s)d_1=(sd_2)d_1=s(d_2d_1),\quad for\ the\ unique\ d_1,d_2\in D_1.$$
    
    Therefore $End(_A S)\cong D_1^{op}$ and this concludes the whole proof.

    \end{proof}

\section{The Enveloping Algebra}

    Let $V$ be a left $A$-module. $D=End({}_{A}V)$. We can write left $A$-module endomorphisms of $V$ on the right and define the composition of such endomorphisms by the rule
    \[\gamma(\alpha \beta)=(\gamma\alpha) \beta,\ \gamma\in V,\ \alpha,\beta \in D. \]
    We can regard $V$ as a $A$-$D$-bimodule, written ${}_A V_{D}$ and the two actions are compatible, i.e., 
    \[a(v\alpha)=(av)\alpha,\ a\in A,\  \alpha \in D.\]
    In particular, if we regard $A$ as a left-$A$ module, then the action of $End({}_A A)$ can be regarded as right multiplication by an element of $A^{op}$ via $a\mapsto (a)\alpha=a\cdot(1)\alpha$ where $(1)\alpha\in A^{op}$.
    
    \begin{definition}[enveloping algebra]\label{1.3.1}\index{enveloping algebra}
        The \emph{enveloping algebra} $A^e$ of a $k$-algebra $A$ is \[A^e\defeq A\otimes_k A^{op}.\] If A is a $k$-subalgebra of B, the natural action of $A^e$ on $B$ (hence in particular A) is given by 
        \[ (\sum a_i \otimes b_i)x=\sum a_i x b_i,\ a_i\in A,\ b_i\in A^{op},\ x\in B. \]
    \end{definition}
    
    By the universal property of tensor product there is a homomorphism from $A^e$ to $B$ so this is well-defined and it is a module action by direct verification. Observe that an $A^e$-submodule of $A$ is a two-sided ideal of $A$ and if $A$ is simple, $A$ is $A^e$-irreducible. To understand $End_{A^e}(A)$, note that any $\alpha \in End_{A^e}(A)$ is both left $A$-linear and right $A^{op}$-linear. A left $A$-linear (right $A^{op}$-linear) map is in a natural way the same as right $A^{op}$ (left $A$) multiplications. Hence the action of $\alpha$ is $x\mapsto cx=xd$, for $c\in A$, $d\in A^{op}$. Putting $x=1_A$ gives $c=d$. Hence $c\in Z(A)$. If $A$ is central $k$-algebra then $c\in k$ and the action is $x\mapsto cx$, for $c\in k$, i.e., $End_{A^e}(A)\cong k$.
    
    \begin{lemma}[Schur's lemma]
        Let V be a simple left $A$-module. Then $D=End({}_A V)$ is a division ring.
    \end{lemma}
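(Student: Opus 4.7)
The plan is to show directly that every nonzero element of $D = \text{End}({}_A V)$ admits a two-sided inverse in $D$; since $D$ is already a ring (it is an endomorphism ring), this is the only thing left to verify for it to be a division ring. The strategy relies entirely on the fact that the only $A$-submodules of $V$ are $0$ and $V$.

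First I would take an arbitrary nonzero $\alpha \in D$ and consider its kernel $\ker \alpha \subseteq V$ and its image $V\alpha \subseteq V$ (using the right-action convention from the paragraph preceding \ref{1.3.1}). Both are $A$-submodules of $V$: the kernel because $\alpha$ is $A$-linear, and the image for the same reason. Since $\alpha \neq 0$, the kernel is a proper submodule, hence $\ker \alpha = 0$ by simplicity; similarly, the image is nonzero, hence $V\alpha = V$. Thus $\alpha$ is bijective as a map of abelian groups.

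Next I would show that the set-theoretic inverse $\alpha^{-1}$ lies in $D$, i.e., is itself $A$-linear. Given $a \in A$ and $w \in V$, write $w = v\alpha$ with $v \in V$ unique; then $(aw)\alpha^{-1} = (a(v\alpha))\alpha^{-1} = ((av)\alpha)\alpha^{-1} = av = a(w\alpha^{-1})$, where the second equality uses the $A$-linearity of $\alpha$. Hence $\alpha^{-1} \in \text{End}({}_A V) = D$, and it serves as a two-sided inverse in $D$ since $\alpha \alpha^{-1}$ and $\alpha^{-1}\alpha$ both act as the identity on $V$. Together with the ring structure already present on $D$, this shows $D$ is a division ring.

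I do not anticipate any serious obstacle: the argument is essentially one line once the simplicity of $V$ is invoked on the two submodules $\ker \alpha$ and $V\alpha$. The only point requiring any care at all is the bookkeeping of the right-action convention on endomorphisms, to make sure that the verification of $A$-linearity of $\alpha^{-1}$ is written in a way that is consistent with the composition rule $\gamma(\alpha\beta) = (\gamma\alpha)\beta$ introduced just above.
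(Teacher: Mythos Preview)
Your proof is correct and is exactly the standard argument for Schur's lemma. The paper itself does not supply a proof but merely refers the reader to classical texts or \cite{non-comm}, where one would find precisely the kernel/image argument you have written.
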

    \begin{proof}
        One can find a proof in many classical texts, or \cite{non-comm}.
    \end{proof}
    
    \begin{theorem}[Jacobson's density]\label{1.12}
        Let $A$ be a ring. Let $V$ be a simple left $A$-module. Let $X\subseteq V$ be a finite $D$-linearly independent subset of $V$ where $D=End_A(V)$ (by Schur's lemma $D$ is a division ring). Then for every $\alpha\in End_D(V_D)$ there exist $a\in A$ such that $\alpha (x)=ax$ for all $x\in X$. 
    \end{theorem}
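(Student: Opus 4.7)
The plan is to proceed by induction on $n := |X|$. For the base case $n = 1$, with $X = \{x\}$ and $x \neq 0$, I note that $Ax$ is a nonzero $A$-submodule of the simple module $V$, hence $Ax = V$; in particular $\alpha(x) \in Ax$ furnishes the required element $a$.

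For the inductive step, I would set
\[ I := \{a \in A : a x_i = 0 \text{ for all } 1 \le i \le n-1\}, \]
a left ideal of $A$, and reduce everything to the key claim that $I x_n = V$. Granting this, the inductive hypothesis applied to $X' := \{x_1, \ldots, x_{n-1}\}$ produces $a' \in A$ with $a' x_i = \alpha(x_i)$ for $i < n$; then $\alpha(x_n) - a' x_n$ lies in $V = I x_n$, so some $b \in I$ satisfies $b x_n = \alpha(x_n) - a' x_n$, and $a := a' + b$ does the job.

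To prove the claim, I observe that $I x_n$ is an $A$-submodule of $V$, so by simplicity of $V$ it suffices to show $I x_n \neq 0$. Supposing for contradiction $I x_n = 0$, I consider the $A$-module map $\phi : A \to V^{n-1}$, $a \mapsto (a x_1, \ldots, a x_{n-1})$. Since $V$ is a right $D$-vector space (Schur's lemma making $D$ a division ring), I can extend $\{x_1, \ldots, x_{n-1}\}$ to a $D$-basis of $V$; for any target $(v_1, \ldots, v_{n-1}) \in V^{n-1}$, I build $\beta \in End_D(V_D)$ with $\beta(x_i) = v_i$, and the inductive hypothesis applied to $X'$ and $\beta$ yields $a \in A$ with $\phi(a) = (v_1, \ldots, v_{n-1})$. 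Hence $\phi$ is surjective. Since $\ker \phi = I$ and $I x_n = 0$, the assignment $\phi(a) \mapsto a x_n$ defines a well-posed $A$-module map $\tilde \phi : V^{n-1} \to V$. Using the identification $Hom_A(V^{n-1}, V) \cong D^{n-1}$ (with $D$ acting on the right), I can write $\tilde \phi(v_1, \ldots, v_{n-1}) = \sum_{i=1}^{n-1} v_i d_i$ for some $d_i \in D$; evaluating at $\phi(1) = (x_1, \ldots, x_{n-1})$ then gives $x_n = \sum_i x_i d_i$, contradicting the $D$-linear independence of $X$.

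The step I expect to be the main obstacle is conjuring the auxiliary map $\tilde \phi$ and converting an apparent $A$-linear dependence into an honest $D$-linear relation on $V_D$; this is the clever move that makes the induction close. A minor ancillary point is the appeal to basis existence for $V_D$ (axiom of choice) needed to manufacture $\beta$ from prescribed values on $X'$.
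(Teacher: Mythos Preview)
The paper does not actually supply a proof of this theorem; it simply refers the reader to ``many classical texts'' and the lecture notes \cite{non-comm}. So there is nothing to compare against at the level of strategy.

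Your argument is correct and is precisely the standard inductive proof one finds in those classical texts (e.g.\ Lam, Bourbaki, Lang). The key idea---that the assumption $I x_n = 0$ lets you factor the map $a \mapsto a x_n$ through $V^{n-1}$ and then read off a $D$-linear dependence from $\mathrm{Hom}_A(V^{n-1},V) \cong D^{n-1}$---is exactly the standard trick, and you have identified it clearly. One stylistic remark: you do not strictly need the surjectivity of $\phi$ (and hence the basis-extension step invoking choice). Since $V^{n-1}$ is semisimple, the image $M=\phi(A)$ is a direct summand, and your $\tilde\phi$ defined on $M$ extends to an $A$-map $V^{n-1}\to V$ by zero on a complement; the same decomposition into $D$-components then applies. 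But this is a cosmetic improvement, not a gap.
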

    
    \begin{remark}
        This is a density theorem in the following sense: $V$ is a vector space over division ring $D$. $A$ is said to act densely on $V$ (or $A$ is said to be dense on $V$) if for every $n$ and $x_1,\cdots,x_n$ in $V$, which are linearly independent over $D$, and corresponding $n$ elements $y_1,\cdots,y_n$ in $V$, there is an element $a\in A$ such that $y_i=ax_i$ for all $i$. This theorem says that $A$ acts densely on $V$.
    \end{remark}
    \begin{proof}
         One can find a proof in many classical texts, or \cite{non-comm}.
    \end{proof}

\section{Some Main Results}
    We are now ready to prove some main results for finite dimensional central simple algebra.
    
    \begin{theorem}\label{1.13}
        If $A$ is a finite dimensional central simple algebra over a field $k$. Then
        \[ A^e=A\otimes_k A^{op}\cong M_n(k) ,\  n=[A:k].\]
    \end{theorem}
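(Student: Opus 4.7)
The plan is to use the natural action of $A^e$ on $A$ (Definition \ref{1.3.1}) to construct an explicit isomorphism $A^e \xrightarrow{\sim} \mathrm{End}_k(A) \cong M_n(k)$. The action $(a \otimes b)\cdot x = axb$ extends to a $k$-algebra homomorphism
\[
\phi: A^e \longrightarrow \mathrm{End}_k(A),
\]
since each element of $A^e$ acts as a $k$-linear map on $A$ (the $k$-linearity uses that $k = Z(A)$ sits in the middle). Because $[A:k] = n$, we have $\mathrm{End}_k(A) \cong M_n(k)$, so it will suffice to prove $\phi$ is bijective.

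First I would show $\phi$ is surjective via the Jacobson density theorem \ref{1.12}. The paragraph following Definition \ref{1.3.1} already records the two key facts: the $A^e$-submodules of $A$ are exactly its two-sided ideals, so $A$ is a simple $A^e$-module by simplicity of $A$; and $\mathrm{End}_{A^e}(A) \cong k$, using centrality. Hence taking any $k$-basis $x_1, \dots, x_n$ of $A$ (which is $k$-linearly independent in the sense required by \ref{1.12}, since here $D = k$) and any $k$-linear $\alpha \in \mathrm{End}_k(A) = \mathrm{End}_D(A_D)$, Jacobson density produces an element $u \in A^e$ with $\alpha(x_i) = u \cdot x_i$ for all $i$. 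Since both $\phi(u)$ and $\alpha$ are $k$-linear and agree on a basis, $\phi(u) = \alpha$, giving surjectivity.

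Finally, injectivity follows from a dimension count: $[A^e : k] = [A : k] \cdot [A^{op} : k] = n^2 = [M_n(k) : k]$, so a surjective $k$-linear map between $k$-vector spaces of the same finite dimension is automatically an isomorphism. Combined with surjectivity of $\phi$, this gives $A^e \cong M_n(k)$.

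The main obstacle is the surjectivity step, where one must carefully invoke Jacobson density with the correct identification of the endomorphism ring. Everything else is either a routine verification (that $\phi$ is a well-defined $k$-algebra map, via the universal property \ref{univ} applied to the commuting actions of $A$ and $A^{op}$ on $A$) or a dimension count. Note that this argument does not need to invoke that $A^e$ is itself simple; surjectivity onto $M_n(k)$ together with equal dimensions forces that a posteriori, recovering $A^e$ as a simple finite-dimensional central $k$-algebra.
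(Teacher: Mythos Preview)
Your proposal is correct and follows essentially the same approach as the paper: regard $A$ as an $A^e$-module, use simplicity and centrality of $A$ to get that $A$ is $A^e$-irreducible with $\mathrm{End}_{A^e}(A)\cong k$, apply Jacobson density to obtain surjectivity of $A^e\to\mathrm{End}_k(A)$, and conclude by the dimension count $[A^e:k]=n^2=[\mathrm{End}_k(A):k]$. Your write-up is somewhat more detailed in justifying the well-definedness of $\phi$ and the passage from density to surjectivity, but the argument is the same.
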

    \begin{proof}
        Regard $A$ as a $A^e$-module. $A$ is simple so $A$ is $A^{e}$-irreducible and $End_{A^e}(A)\cong k$ as above. $A$ is finite dimensional over $k$ so choose a $k$-basis $\{x_1,\cdots,x_n\}$. By Density Theorem \ref{1.12}, we have a epimorphism from $A^e$ to $End_k(A)$. Since \[[A^e:k]=n^2=[End_k(A):k],\] we have $A^e\cong End_k(A)\cong M_n(k)$.
    \end{proof}

    \begin{theorem}\label{1.4.2}
    Let $A$ be a finite dimensional central simple subalgebra of an algebra $B$ over $k$. Then
    \begin{enumerate}[(i)]
        \item $B\cong A\otimes_k C$, $C=C_B(A)$;
        \item There is a bijective correspondence between ideals $I\triangleleft C$ and ideals in $B$ via $I\mapsto A\otimes I$;
        \item $Z(C)=Z(B)$.
    \end{enumerate}
    \end{theorem}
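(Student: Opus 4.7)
The plan is to deduce (i) from Proposition \ref{1.7} applied to $A,C\subseteq B$, then read off (ii) and (iii) from the explicit isomorphism $B\cong A\otimes_k C$. Hypothesis (i) of Proposition \ref{1.7} holds by the definition $C=C_B(A)$. For hypothesis (ii) I would fix a $k$-basis $x_1,\ldots,x_n$ of $A$ and endow $B$ with the natural $A^e$-module structure $(a\otimes a')\cdot b=aba'$ from Definition \ref{1.3.1}. By Theorem \ref{1.13}, $A^e\cong M_n(k)$ is simple Artinian, with $A$ as its unique simple module and $End_{A^e}(A)\cong k$ (as remarked after Definition \ref{1.3.1}).

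For the uniqueness half of Proposition \ref{1.7}(ii), I would invoke the Jacobson density theorem \ref{1.12} to obtain $s_1,\ldots,s_n\in A^e$ with $s_j\cdot x_i=\delta_{ij}$. Writing $s_j=\sum_\ell u_\ell\otimes v_\ell$, the crucial computation exploits that $c\in C$ commutes with each $v_\ell\in A$:
\[s_j\cdot(x_i c)=\sum_\ell u_\ell x_i c v_\ell=\sum_\ell u_\ell x_i v_\ell c=(s_j\cdot x_i)\,c.\]
Applying $s_i$ to any supposed relation $\sum_j x_j c_j=0$ with $c_j\in C$ then forces $c_i=0$.

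For the existence half, I would exploit the semisimplicity of $A^e$: $B$ decomposes as a direct sum of simple $A^e$-submodules, each isomorphic to $A$. An $A^e$-linear map $\phi\colon A\to B$ must send $1$ to an element of $C$ (since $a\phi(1)=\phi(a)=\phi(1)a$), and its image is then $Ac$ for $c=\phi(1)$. Consequently $B=\sum_\alpha A c_\alpha$ with $c_\alpha\in C$, and expanding each $A$-coefficient in the basis $\{x_j\}$ presents every $b\in B$ as $\sum_j x_j c_j$ with $c_j\in C$. Proposition \ref{1.7} then yields $B\cong A\otimes_k C$ as $k$-algebras.

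Parts (ii) and (iii) would fall out quickly. For (ii), under $B\cong A\otimes_k C$ any $A^e$-submodule has the form $A\otimes_k W$ for a $k$-subspace $W\subseteq C$ (recovered as the multiplicity space $Hom_{A^e}(A,\cdot)$); such a submodule is a two-sided ideal of $B$ iff it is stable under left and right multiplication by $C$, which is precisely $W\ideal C$. For (iii), $Z(B)\subseteq C_B(A)=C$ is immediate, and for $z\in C$ we have $z\in Z(B)$ iff $z$ commutes with all of $C$ (it already commutes with $A$), i.e., $z\in Z(C)$. The main obstacle will be the existence half in (i); the simple Artinian structure of $A^e\cong M_n(k)$ and the identification of $C$ with the multiplicity space are what make this go through.
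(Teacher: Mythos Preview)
Your proposal is correct and follows the same overall strategy as the paper: view $B$ as an $A^e$-module, use $A^e\cong M_n(k)$ (Theorem~\ref{1.13}) to decompose $B$ into copies of the simple module $A$, observe that each copy is generated by an element of $C$, and then invoke Proposition~\ref{1.7}. The organization differs slightly. For (i), the paper obtains uniqueness directly from the direct-sum decomposition $B=\bigoplus_\alpha Ac_\alpha$ and then checks that the $c_\alpha$ form a $k$-basis of $C$; you instead fix a basis $\{x_i\}$ of $A$ and use the density theorem to build projectors $s_j\in A^e$ with $s_j\cdot x_i=\delta_{ij}$, which recovers each $c_j$ from a relation $\sum x_jc_j=0$. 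This is exactly the computation the paper performs, but the paper deploys it in (ii) (to show every ideal of $B$ has the form $AI$), whereas you have already absorbed it into (i) and then handle (ii) by the cleaner multiplicity-space description $\mathrm{Hom}_{A^e}(A,-)$. Both routes are valid; yours makes the module-theoretic structure of (ii) more transparent, while the paper's is more hands-on. Part (iii) is identical in both.
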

    
    \begin{proof}
        We can regard $B$ as $A^e$-module. By theorem \ref{1.13}, $A^e\cong M_n(k)$ is a simple ring since $k$ is a field. $A$ is simple so $A$ is $A^e$-irreducible. By lemma \ref{1.14} above, we know that all irreducible $A^e$-modules are isomorphic to $A$. Therefore $B$ is a direct sum of all irreducible $A^e$-modules which are isomorphic to $A$.
        
        $(i)$.
            We want to apply \ref{1.7}. By definition of $C$, we have $ac=ca$ for all $a\in A,\ c\in C$. Observe that $1$ is a generator of $A$ as $A^e$-module having the properties
            \[ (a\otimes 1)\cdot 1=a\cdot 1=(1\otimes a)\cdot 1,\] \[(a\otimes 1)\cdot 1=0\ \textrm{implies}\ a=0.\]
            Since any irreducible $A^e$-module is isomorphic to $A$, we can find $c_\alpha$ in every irreducible $A^e$-module with the properties:
            \begin{itemize}
                \item $(a\otimes 1)\cdot c_\alpha=(1\otimes a)\cdot c_\alpha$;
                \item $(a\otimes 1)\cdot c_\alpha=0\ \textrm{implies}\ a=0$;
                \item $c_\alpha$ is an $A^e$-generator, hence an $A$-generator by the first property.
            \end{itemize}
            Therefore we can write $B=\oplus_\alpha A c_\alpha$ with $c_\alpha a=ac_\alpha$, and $ac_\alpha=0$ implies $a=0$ for all $a$. Therefore $c_\alpha\in C$ and every element of $B$ can be written in a unique way of $\sum a_\alpha c_\alpha$, $a_\alpha \in A$. It remains to show $\{c_\alpha\}_\alpha$ is a basis of $C$.
            
            Take $c\in C\subset B$ and write $c=\sum a_\alpha c_\alpha$. For any $a\in A$, $ac=ca$ implies $aa_\alpha=a_\alpha a$. Hence $a_\alpha \in Z(A)\cong k$. Therefore $c\in \sum kc_\alpha$ and $\{c_\alpha\}_\alpha$ is a basis of $C$.
            
            Now applying \ref{1.7} gives the result (i).
            
            $(ii)$.
            Write $a\in A$ for $a\otimes 1_C$, and $c\in C$ for $1_A\otimes C$ in $B$. Note that $ac=ca$ so $B=AC=CA$. $I$ is an ideal in $C$. $AI$ is an ideal in $B$. By the universal property of tensor product \ref{univ} there is a unique homomorphism $A\otimes_k C\to AC$. By $(i)$, this is an isomorphism. Let $\{x_1=1,x_2,\cdots,x_n\}$ be a basis of $A$. By the isomorphism we see that every element of $B$ is written uniquely in the form $\sum x_i c_i$, $c_i\in C$ so elements of $AI$ is of the form $\sum_{i=1}^n x_i d_i$ for $d_i\in I$. 
            
            Claim that $AI\cap C=I$. Indeed, let $y\in AI\cap C$. $y=\sum_{i=1}^n x_i d_i$ for some $d_i\in I$. Since $y\in C$, $y=c_1=x_1 c_1$ for some $c_1\in C$. Since $y\in B$ the uniqueness gives $\sum_{i=1}^n x_i d_i=x_1 c_1$. Since $\{x_i\}_{i=1}^n$ form a basis, $y=x_1 d_1$ for $d_1\in I$. Because $x_1=1$, $y\in I$. $AI\cap C\subseteq I$. But then $I\subseteq AI\cap C$ gives $I=AI\cap C$. For injectivity, set $AI_1=AI_2$, and then $I_1=AI_1\cap C=AI_2\cap C=I_2$.
            
            For surjectivity, consider any ideal $I'$ in $B$. We need to show $I'$ is of the form $AI$ where $I\defeq I'\cap C$, an ideal in $C$. We claim that if $\sum_{i=1}^n x_i c_i\in I'$ then $c_i\in I'$ for all $i$. To see it consider a $k$-linear map $T_j:A\to A$ sending $x_j$ to $1$ and other $x_i$, $i\neq j$, to $0$. Since $End_{A^e} A\cong k$ by the comments following \ref{1.3.1}, density theorem \ref{1.12} gives that there exist $w_j\in A^e$ acting on $A$ in the same way as $T_j$. $w_j\in A\otimes_k A^{op}$ so $w_j=\sum_{l=1}^n x_l\otimes v_{jl}$ for $v_{jl}\in A$, and $w_j I'\subseteq I'$.
            
            Then we have
            \begin{align*}
                w_j\cdot (\sum_{i=1}^n x_i c_i) &=\sum_{l=1}^n(x_l(\sum_{i=1}^n x_i c_i) v_{jl})=
            \sum_{l=1}^n \sum_{i=1}^n (x_l x_i v_{jl})c_i\\&=
            \sum_{i=1}^n(\sum_{l=1}^n x_l x_i v_{jl})c_i=\sum_{i=1}^n(w_j\cdot x_i)c_i\\&=
            \sum_{i=1}^n T_j(x_i)c_i=c_j\in w_j I'\subseteq I'.
            \end{align*}
            This is true for all $j=1,\cdots, n$. Moreover $c_j\in C$ so $c_j\in I'\cap C=I$. $\sum_{i=1}^n x_i c_i$ is therefore in $AI$, and hence every ideal in $B$ is of the form $AI$, $I$ is an ideal of $C$.
            
            $(iii)$. $C$ is a subalgebra of $B$ so $Z(B)\subseteq Z(C)$. On the other hand, if $c\in Z(C)$, $c$ commutes with every $a\in A$ by definition of C. Hence $c$ commutes with every element of $AC=B$. Therefore $c\in Z(B)$. $Z(C)\subseteq Z(B)$. We have $Z(C)=Z(B)$.
    \end{proof}
    
    \begin{corollary}
    A is finite dimensional central simple algebra over $k$. $C$ is an arbitrary $k$-algebra. Let $B\cong A\otimes_k C$. Then
    \begin{enumerate}[(i)]
        \item There is a bijective correspondence between ideals $I\triangleleft C$ and ideals in $B$ via $I\mapsto A\otimes_k I$;
        \item $Z(C)=Z(B)$.
    \end{enumerate}
    \end{corollary}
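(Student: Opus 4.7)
The plan is to reduce this corollary directly to Theorem~\ref{1.4.2}. Identify $B$ with $A\otimes_k C$ via the given isomorphism, and view $A$ and $C$ as subalgebras through the natural embeddings $a\mapsto a\otimes 1_C$ and $c\mapsto 1_A\otimes c$. Then $A$ is a finite dimensional central simple subalgebra of $B$, so applying parts (ii) and (iii) of Theorem~\ref{1.4.2} yields precisely the conclusions (i) and (ii) of the corollary, provided we can identify the given algebra $C$ with the centralizer $C_B(A)$.

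The inclusion $C\subseteq C_B(A)$ is immediate from the product rule on $A\otimes_k C$. The nontrivial direction, $C_B(A)\subseteq C$, I plan to establish by mimicking the density-theorem argument used in the proof of Theorem~\ref{1.4.2}(ii). Fix a $k$-basis $\{x_i\}$ of $A$ with $x_1=1$, so that by Proposition~\ref{1.7} every $b\in B$ has a unique expression $b=\sum x_i\otimes c_i$ with $c_i\in C$. Since $End_{A^e}(A)\cong k$ (as noted after Definition~\ref{1.3.1}), the Density Theorem~\ref{1.12} furnishes, for each index $j$, an element $w_j\in A^e=A\otimes_k A^{op}$ whose action on $A$ is the coordinate projection $x_i\mapsto \delta_{ij}$. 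The induced $A^e$-action on $B=A\otimes C$ via the subalgebra $A\otimes 1_C$ operates on the first tensor factor only, so $w_j\cdot b=1\otimes c_j$. On the other hand, writing $w_j=\sum_l u_l\otimes v_l$ and using that $b\in C_B(A)$ commutes with each $v_l\otimes 1_C\in A$, the expression $w_j\cdot b=\sum_l(u_l\otimes 1_C)\,b\,(v_l\otimes 1_C)$ collapses to $(w_j\cdot 1_A)\,b=\delta_{j,1}\,b$. Matching the two expressions forces $b=1\otimes c_1\in C$ and $c_j=0$ for $j>1$.

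The main obstacle is that $C$ is permitted to be infinite dimensional over $k$, ruling out a naive dimension count to conclude $C=C_B(A)$; the density-theorem argument above bypasses this by working coefficient-by-coefficient in the basis $\{x_i\}$. Once $C=C_B(A)$ has been established, both parts of the corollary follow verbatim from Theorem~\ref{1.4.2}, without any additional calculation.
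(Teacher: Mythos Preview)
Your proposal is correct and follows the same reduction strategy as the paper: identify $C$ with $C_B(A)$ and then invoke Theorem~\ref{1.4.2}. The only difference is in how you establish the inclusion $C_B(A)\subseteq C$. You expand in a $k$-basis $\{x_i\}$ of $A$ and use the density theorem to isolate the $C$-coefficients, whereas the paper expands in a $k$-basis $\{y_\beta\}$ of $C$ and observes that commutation with all of $A$ forces the $A$-coefficients $a_\beta$ to lie in $Z(A)=k$, so $b=\sum a_\beta y_\beta\in C$ directly.

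Your concern that ``$C$ may be infinite dimensional, ruling out a naive dimension count'' is misplaced: the paper's argument is not a dimension count at all, and works for any (possibly infinite) $k$-basis of $C$. The density-theorem machinery you invoke is sound but unnecessary here; the paper's route is shorter and uses only the centrality $Z(A)=k$. That said, your argument has the virtue of reusing exactly the computation from the proof of Theorem~\ref{1.4.2}(ii), so it is a natural choice if one wants to emphasize that the corollary is a formal consequence of that theorem's proof rather than a separate calculation.
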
 
        
    \begin{proof}
    Write $a\in A$ for $a\otimes 1_C$ and $c\in C$ for $1_A\otimes c$. Then $ac=ca$ for all $a\in A,\ c\in C$. Let $Z$ be the centralizer of $A$ in $B$. We have $C\subseteq Z$. Claim that $C=Z$.
    
    \noindent
    To prove the claim, consider a basis $\{y_\beta\}_\beta$ of C. Then element in $B$ can be written in a unique way of the form $b=\sum a_\beta y_\beta$ for $a_\beta\in A$. If $b=\sum a_\beta y_\beta \in Z$, then $ab=ba$ if and only if $a_\beta a=aa_\beta$ for all $a_\beta$, $a\in A$. Therefore $a_\beta\in Z(A)\cong k$. $b=\sum a_\beta y_\beta$ for $y_\beta\in k$. Then $b\in C$ and $Z\subseteq C$.
    
    \noindent
    Now $C=Z$ and the result follows from the previous theorem.
    \end{proof}

    \begin{corollary}\label{1.17}
    Let $A$ be a finite dimensional central simple algebra over $k$. $C$ is an arbitrary $k$-algebra. Then
    \begin{enumerate}[(i)]
        \item $A\otimes_k C$ is simple if $C$ is simple;
        \item $A\otimes_k C$ is central if $C$ is central.
    \end{enumerate}
    That is, if $A,\ C$ are finite dimensional central simple $k$-algebras, then $A\otimes_k C$ is a finite dimensional central simple $k$-algebra.
    \end{corollary}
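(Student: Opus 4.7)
The plan is to deduce both parts directly from the preceding corollary, which established a bijection $I \mapsto A \otimes_k I$ between two-sided ideals of $C$ and two-sided ideals of $B \defeq A \otimes_k C$, together with the equality $Z(B) = Z(C)$. The hypotheses of that corollary — that $A$ be a finite dimensional central simple $k$-algebra and $C$ an arbitrary $k$-algebra — are exactly what is assumed here, so I can invoke it directly without any further verification.

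For part (i), I would observe that if $C$ is simple then its only two-sided ideals are $0$ and $C$ itself, and under the bijection these correspond to $A \otimes_k 0 = 0$ and $A \otimes_k C = B$ respectively. Hence $B$ has no nontrivial proper two-sided ideal, which is the definition of simplicity. For part (ii), centrality of $C$ means $Z(C) = k$, and the preceding corollary then yields $Z(B) = Z(C) = k$, which is precisely the statement that $B$ is central.

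Finally, to conclude the "that is" clause, I would combine (i) and (ii): if both $A$ and $C$ are finite dimensional central simple $k$-algebras, then $A \otimes_k C$ is simple and central, and finite-dimensionality follows from $[A \otimes_k C : k] = [A:k]\cdot[C:k] < \infty$. Thus $A \otimes_k C$ is itself a finite dimensional central simple $k$-algebra, as claimed.

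I do not anticipate any obstacle here: the substantive content (the ideal correspondence and the identification of centers) has already been absorbed into the previous corollary, so the present statement is essentially a bookkeeping consequence. The only point worth flagging is that the asymmetry in the hypotheses — $A$ central simple, $C$ arbitrary — is harmless for the final symmetric claim because the roles of $A$ and $C$ can be interchanged via the canonical isomorphism $A \otimes_k C \cong C \otimes_k A$.
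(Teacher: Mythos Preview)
Your proposal is correct and follows exactly the paper's approach: the paper's proof is the single sentence ``This follows immediately from the last corollary,'' and you have simply spelled out how the ideal bijection gives (i) and the center identification gives (ii). Your closing remark about the symmetry $A\otimes_k C\cong C\otimes_k A$ is a nice observation but not actually needed for the final clause, since the asymmetric hypotheses are already satisfied when both $A$ and $C$ are finite dimensional central simple.
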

    
    \begin{proof}
    This follows immediately from the last corollary.
    \end{proof}

\chapter{Definition of the Brauer Group}
\numberwithin{theorem}{chapter}
    The properties exhibited in the previous chapter result in an important group, the Brauer Group. This chapter is mainly based on \cite{jacobson2009}, unless explicitly cited. Throughout this chapter, let $A,B,C$ be finite dimensional central simple algebra over a field $k$. Let $D,\ D_1,\ D_2$ be finite dimensional central division algebra over $k$. 
    
    \begin{definition}[similar]\index{similar}
        We say $A$ is \emph{similar} to $B$, written $A\sim B$ if there exist positive integers $m,n$ such that $M_m(A)\cong M_n(B)$, or equivalently, $M_m(k)\otimes_k A\cong M_n(k)\otimes B$.
        
        We write $[A]$ for similarity class of A under $\sim$, that is, the equivalent class (see below) of finite dimensional central simple $k$-algebras similar to $A$.
    \end{definition}

    \begin{lemma}
    The relation $\sim$ is an equivalence relation.
    \end{lemma}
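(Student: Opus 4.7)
The plan is to verify the three axioms of an equivalence relation directly, using Proposition \ref{1.8} as the only non-trivial tool. Reflexivity and symmetry are immediate from the definition: $M_1(A)\cong M_1(A)$ gives $A\sim A$, and the defining isomorphism $M_m(A)\cong M_n(B)$ is visibly symmetric in $A$ and $B$.

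The only real content is transitivity. Suppose $A\sim B$ and $B\sim C$, so that we have isomorphisms $M_m(A)\cong M_n(B)$ and $M_p(B)\cong M_q(C)$ for some positive integers $m,n,p,q$. The goal is to exhibit positive integers $r,s$ with $M_r(A)\cong M_s(C)$. The natural choice is to inflate both given isomorphisms so that a common matrix algebra over $B$ appears, and then chain them.

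To do this I would first observe the general identity $M_u(M_v(R))\cong M_{uv}(R)$ for any $k$-algebra $R$, which follows from Proposition \ref{1.8}: indeed
\begin{equation*}
M_u(M_v(R))\cong M_u(k)\otimes_k M_v(k)\otimes_k R\cong M_{uv}(k)\otimes_k R\cong M_{uv}(R).
\end{equation*}
Applying $M_p(-)$ to the first isomorphism and $M_n(-)$ to the second, I get
\begin{equation*}
M_{pm}(A)\cong M_p(M_m(A))\cong M_p(M_n(B))\cong M_{pn}(B)\cong M_n(M_p(B))\cong M_n(M_q(C))\cong M_{nq}(C),
\end{equation*}
so taking $r=pm$ and $s=nq$ yields $A\sim C$.

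I do not expect any real obstacle here; the whole argument rests on having a clean way to combine matrix algebras, which Proposition \ref{1.8} already provides. The only point requiring a little care is making sure the inflations are applied on the correct side so that the two chains meet at the same algebra $M_{pn}(B)$.
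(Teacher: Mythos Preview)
Your proof is correct and follows essentially the same approach as the paper: both dismiss reflexivity and symmetry as obvious and prove transitivity by inflating the two given isomorphisms via Proposition~\ref{1.8} so that they meet at a common matrix algebra over $B$, yielding $M_{pm}(A)\cong M_{nq}(C)$. The paper writes out the tensor-product chain explicitly rather than packaging it as the identity $M_u(M_v(R))\cong M_{uv}(R)$, but the content is identical.
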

    \begin{proof}
        Reflexive and symmetric property is obvious. Now we show transitivity.\\
        If $A\sim B$ and $B\sim C$ then there exist $m,n,s,t$ such that $M_m(A)\cong M_n(B), M_s(B)\cong M_t(C)$. By proposition \ref{1.8}, commutativity and associativity of the tensor multiplication (under isomorphism), we have
        \[M_{ms}(A)\cong M_{ms}(k)\otimes_k A\cong M_s(k)\otimes_k M_m(k)\otimes_k A\cong M_s(k)\otimes_k M_m(A)\]
        \[\cong M_s(k)\otimes_k M_n(B)\cong M_s(k)\otimes_k M_n(k) \otimes_k B\cong M_s(B)\otimes_k M_n(k) \]
        \[\cong M_t(C) \otimes_k M_n(k)\cong M_t(k)\otimes_k M_n(k) \otimes_k C\cong M_{tn}(k)\otimes_k C\cong M_{tn}(C).\]
        Thus $A\sim C$.
    \end{proof}

    \begin{lemma} \label{2.3}
        Let $A\cong M_n(D_1)$, $B\cong M_m(D_2)$. $A\sim B$ if and only if $D_1\cong D_2$.
    \end{lemma}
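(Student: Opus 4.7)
The plan is to reduce both implications to Proposition \ref{1.8} (for consolidating nested matrix rings) and Theorem \ref{1.9} (for uniqueness of the division ring and size in $M_n(D)$).

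For the forward direction, I would begin by unpacking what similarity means: $A \sim B$ gives positive integers $s, t$ with $M_s(A) \cong M_t(B)$. Substituting $A \cong M_n(D_1)$ and $B \cong M_m(D_2)$ produces $M_s(M_n(D_1)) \cong M_t(M_m(D_2))$. The goal is then to collapse each nested matrix ring to a single one, using Proposition \ref{1.8}: iterating (i) rewrites $M_s(M_n(D_1))$ as $M_s(k) \otimes_k M_n(k) \otimes_k D_1$, then (ii) combines the first two factors into $M_{sn}(k)$, and (i) again reassembles this into $M_{sn}(D_1)$. Doing the same on the other side yields $M_{sn}(D_1) \cong M_{tm}(D_2)$. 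Theorem \ref{1.9} then forces $sn = tm$ and, more importantly, $D_1 \cong D_2$.

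For the backward direction, assuming $D_1 \cong D_2$ I would simply exhibit witnesses for similarity. The natural choice is $s = m$ and $t = n$: the same consolidation via Proposition \ref{1.8} gives
\[
M_m(A) \cong M_m(M_n(D_1)) \cong M_{mn}(D_1) \cong M_{mn}(D_2) \cong M_n(M_m(D_2)) \cong M_n(B),
\]
so $A \sim B$.

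There is no real obstacle here; the entire content of the lemma is already packaged into Theorem \ref{1.9}, and the only thing to be careful about is keeping the matrix-ring consolidations straight so that the application of Theorem \ref{1.9} is clean. In writing this up I would state the isomorphism chain once in the backward direction and reuse it (or cite it) in the forward direction rather than repeating the computation.
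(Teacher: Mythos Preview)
Your proposal is correct and follows essentially the same approach as the paper: both rest on Theorem~\ref{1.9} (uniqueness of the division ring in a matrix decomposition) together with Proposition~\ref{1.8} to consolidate nested matrix rings. Your write-up is in fact more explicit than the paper's, which simply invokes the uniqueness from Theorem~\ref{1.9} and asserts the equivalence without spelling out the chain of isomorphisms.
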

    \begin{proof}
    By Artin-Weddernburn Theorem \ref{1.4}, we see $A\cong M_n(D)$ for some unique division ring $D$ and integer $n\geq 1$, where the uniqueness follows from theorem \ref{1.9}. We have \[Z(D)\cong Z(M_n(D))\cong Z(A)\cong k.\] Since $A$ is finite dimensional, $D$ is finite dimensional. Hence every finite dimensional central simple algebra correspond to a finite dimensional central division algebra. Conversely, for a $D$ as indicated above, $M_n(D)$ is finite dimensional central simple. Therefore if $A\cong M_n(D_1)$, $B\cong M_m(D_2)$, then $A\sim B$ if and only if $D_1\cong D_2$.
    \end{proof}

    \begin{lemma}\label{2.4}
    $[A]=[k]$ if and only if $A\cong M_n(k)$ for some $n$.
    \end{lemma}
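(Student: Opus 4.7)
The plan is to reduce this immediately to Lemma \ref{2.3}. Note that $k = M_1(k)$, which expresses $k$ in the Artin--Wedderburn form $M_n(D)$ with $n=1$ and division algebra $D = k$. Since $A$ is assumed finite dimensional central simple, Artin--Wedderburn (Theorem \ref{1.4}) together with the uniqueness in Theorem \ref{1.9} gives a unique decomposition $A \cong M_s(D_1)$ for some finite dimensional central division algebra $D_1$ over $k$ and some integer $s \geq 1$.

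For the forward direction, suppose $[A] = [k]$. Then $A \sim k$, and applying Lemma \ref{2.3} to $A \cong M_s(D_1)$ and $k \cong M_1(k)$ forces $D_1 \cong k$. Hence $A \cong M_s(k)$, which is the desired conclusion with $n = s$.

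For the reverse direction, suppose $A \cong M_n(k)$ for some $n$. Then comparing with $k \cong M_1(k)$, both sides are matrix algebras over the same division algebra $k$, so Lemma \ref{2.3} yields $A \sim k$, i.e.\ $[A] = [k]$.

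There is no real obstacle here: the statement is essentially a restatement of the equivalence from Lemma \ref{2.3} in the special case where the underlying division algebra is the base field itself. The only mild subtlety is being explicit that $k$ itself appears in the Artin--Wedderburn form as $M_1(k)$ so that Lemma \ref{2.3} applies to both sides symmetrically.
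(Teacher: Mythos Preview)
Your proof is correct. Both directions follow immediately from Lemma~\ref{2.3} once one writes $k = M_1(k)$, exactly as you do.

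The paper's proof is essentially the same in content but unpacks the argument rather than invoking Lemma~\ref{2.3} directly: for the ``if'' direction it verifies $M_m(A)\cong M_{mn}(k)$ via the tensor identities of Proposition~\ref{1.8}, and for the ``only if'' direction it writes $A\cong M_l(D)$, expands $M_m(k)\otimes A\cong M_{ml}(D)\cong M_s(k)$, and then appeals to Theorem~\ref{1.9} to force $D\cong k$. Your version is cleaner precisely because Lemma~\ref{2.3} has already packaged that Artin--Wedderburn-plus-uniqueness step; the paper is effectively reproving a special case of Lemma~\ref{2.3} inside the proof of Lemma~\ref{2.4}.
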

    \begin{proof}
        (if) If $A\cong M_n(k)$, $M_m(A)\cong M_m(k)\otimes_k M_n(k)\cong M_{mn}(k)$. Hence $[A]=[k]$.
        
        (only if) By Artin-Weddernburn Theorem \ref{1.4}, $A\cong M_l(D)$ for some division algebra $D$. $[A]=[k]$ implies $M_m(k)\otimes A\cong M_s(k)$ so \[M_s(k)\cong M_m(k)\otimes_k M_l(D)\cong M_m(k)\otimes_k M_l(k)\otimes_k D\cong M_{ml}(k)\otimes_k D\cong M_{ml}(D).\]
        Then by theorem \ref{1.9} we have $s=ml$ and $k\cong D$. Therefore $A\cong M_l(k)$.
    \end{proof}
    
    \begin{definition}[the Brauer group]\index{the Brauer group}
        Let $Br(k)$ denote the set of similarity classes of finite dimensional central simple $k$-algebras. Endow a binary operation on $Br(k)$ by
        \[ [A][B]\defeq [A\otimes_k B]. \] $Br(k)$ is the \emph{Brauer Group} over $k$.
    \end{definition}
    
    \begin{theorem}\label{2.6}
        $Br(k)$ is a well-defined abelian group.
    \end{theorem}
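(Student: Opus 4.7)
The plan is to verify, in order, that the binary operation $[A][B]=[A\otimes_k B]$ takes values in $Br(k)$, is well-defined on similarity classes, and that $(Br(k),\cdot)$ has an identity, inverses, and is associative and commutative. Closure is immediate from Corollary \ref{1.17}: if $A,B$ are finite dimensional central simple $k$-algebras then so is $A\otimes_k B$ (finite dimensional since $[A\otimes_k B:k]=[A:k][B:k]$), so $[A\otimes_k B]\in Br(k)$.

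For well-definedness, I would show: if $A\sim A'$ and $B\sim B'$ then $A\otimes_k B\sim A'\otimes_k B'$. Unpacking, there are isomorphisms $M_m(A)\cong M_n(A')$ and $M_s(B)\cong M_t(B')$, and tensoring these gives $M_m(A)\otimes_k M_s(B)\cong M_n(A')\otimes_k M_t(B')$. Repeated application of Proposition \ref{1.8}(i)-(ii), together with the natural associativity and commutativity of tensor products of $k$-algebras, gives
\[M_m(A)\otimes_k M_s(B)\;\cong\; M_m(k)\otimes_k M_s(k)\otimes_k A\otimes_k B\;\cong\; M_{ms}(k)\otimes_k(A\otimes_k B)\;\cong\; M_{ms}(A\otimes_k B),\]
and symmetrically the right-hand side is isomorphic to $M_{nt}(A'\otimes_k B')$. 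Hence $A\otimes_k B\sim A'\otimes_k B'$.

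Associativity and commutativity are then inherited from the natural $k$-algebra isomorphisms $A\otimes_k(B\otimes_k C)\cong(A\otimes_k B)\otimes_k C$ and $A\otimes_k B\cong B\otimes_k A$ (one checks these are algebra maps, not merely $k$-module maps, because the multiplication on a tensor product of algebras is defined componentwise). The identity element is $[k]$: the canonical isomorphism $A\otimes_k k\cong A$ of $k$-algebras yields $[A][k]=[A]$. For inverses, note that $A^{op}$ is also a finite dimensional central simple $k$-algebra (it has the same underlying $k$-vector space structure as $A$, the same center, and the same two-sided ideal lattice), so $[A^{op}]\in Br(k)$; then Theorem \ref{1.13} gives $A\otimes_k A^{op}\cong M_n(k)$ with $n=[A:k]$, and Lemma \ref{2.4} yields $[A][A^{op}]=[M_n(k)]=[k]$.

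The only mildly delicate step is the well-definedness check, where one must juggle matrix algebras and tensor products; after that, each axiom is a one-line appeal to a result from Chapter 1. I would organize the written proof exactly as above, since every nontrivial ingredient has already been established.
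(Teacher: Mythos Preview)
Your proposal is correct and follows essentially the same approach as the paper: closure via Corollary~\ref{1.17}, well-definedness by tensoring the defining matrix isomorphisms and applying Proposition~\ref{1.8}, associativity and commutativity inherited from the tensor product, identity $[k]$, and inverses via Theorem~\ref{1.13}. Your identity argument ($A\otimes_k k\cong A$) is in fact cleaner than the paper's, and your explicit remark that $A^{op}$ is again finite dimensional central simple fills a small gap the paper leaves implicit.
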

    \begin{proof}
        For well-defineness of the binary operation, note that $[A\otimes_k B]\in Br(k)$ by corollary \ref{1.17}. If $A\sim A'$, $B\sim B'$, then it follows from \ref{1.8} there exist $m_1,m_2,n_1,n_2$ such that 
        \[ M_{m_1}(k)\otimes_k A\cong M_{m_2}(k)\otimes_k A'\quad M_{n_1}(k)\otimes_k B\cong M_{n_2}(k)\otimes_k B'
        \]
        This implies 
        \[M_{m_1 n_1}(k)\otimes_k A\otimes_k B\cong
        M_{m_2 n_2}(k)\otimes_k A'\otimes_k B'\]
        so we have $A\otimes_k B\sim A'\otimes_k B'$.
        
        $[k]$ is the unique identity element. The uniqueness follows from lemma \ref{2.4}. It is the identity because for every $[A]$,
        \[ [A][k]\cong M_m(D)\otimes_k M_n(k)\cong M_{mn}(D)=[A]\] for some unique  $m,n\geq 1$ and division algebra $D$ by lemma \ref{2.3} and proposition \ref{1.8}.
        
        Commutativity and associativity of tensor algebra induces commutativity and associativity of $Br(k)$.
        
        By theorem \ref{1.13}, we have $A\otimes_k A^{op}\cong M_n(k)$ for some $n$ so $[A^{op}]=[A]^{-1}$.
        
    \end{proof}
    
    The first example of the Brauer group is when $k$ is algebraically closed field.
    
    \begin{lemma}\label{2.7}
        The only finite dimensional division algebra over an algebraically closed field $k$ is $k$ itself.
    \end{lemma}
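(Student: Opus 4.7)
The plan is to show that every element of $D$ lies in $k$, by reducing to the standard fact that an algebraic extension of an algebraically closed field is trivial. Since $D$ is a $k$-algebra, we may regard $k$ as a subfield of $Z(D)$, so for any element $d \in D$ the subalgebra $k[d] \subseteq D$ generated by $d$ over $k$ is commutative.

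First I would fix an arbitrary $d \in D$ and study $k[d]$. Because $D$ is finite dimensional over $k$, so is $k[d]$, and in particular $d$ satisfies some nonzero polynomial over $k$, so $k[d]$ is isomorphic to a quotient $k[X]/(f(X))$ for some $f \in k[X]$. Since $k[d]$ sits inside the division algebra $D$, it has no zero divisors, so it is a finite dimensional commutative integral domain over $k$. A finite dimensional commutative integral domain over a field is itself a field (multiplication by a nonzero element is an injective, hence surjective, $k$-linear endomorphism), so $k[d]$ is a field.

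Now $k[d]/k$ is a finite field extension, therefore algebraic. Because $k$ is algebraically closed, every element of $k[d]$ is already in $k$, so $k[d] = k$, and in particular $d \in k$. Since $d$ was arbitrary, $D = k$, which is the desired conclusion.

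I do not expect any real obstacle here: the argument is entirely standard and the only point to be careful about is justifying that $k[d]$ is a field rather than merely a commutative domain, for which the finite dimensionality over $k$ (inherited from $D$) is the key ingredient.
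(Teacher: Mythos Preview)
Your proof is correct and follows essentially the same approach as the paper: both show that an arbitrary element $d\in D$ is algebraic over $k$ by finite dimensionality, then use algebraic closure together with the absence of zero divisors in $D$ to force $d\in k$. The paper carries this out by explicitly factoring a linear term $(x-a)$ from the minimal polynomial and cancelling it in $D$, while you package the same idea as ``$k[d]$ is a finite field extension of an algebraically closed field, hence equals $k$.''
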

    
    \begin{proof}
    Suppose $D$ is a division algebra with $[D:k]=n$. Take any $x\in D$. $1,x,x^2,\dots, x^n$ are linear dependent. There exist a monic polynomial of least degree $f\in k[X]$ with $f(x)=0$. Since $k$ is algebraically closed, $f(x)=(x-a)g(x)$ for some $g(x)\neq0$ by the least degree property. Since $D$ is a division algebra, $f(x)=0$, $g(x)\neq 0$ implies $x-a=0$. Now $a\in k$, $x\in k$. Therefore $D=k$.
    \end{proof}
    
    \begin{corollary}
      If $k$ is an algebraically closed field, $Br(k)\cong 1$.
    \end{corollary}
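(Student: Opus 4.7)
The plan is to show that every similarity class in $Br(k)$ coincides with $[k]$, so the group is trivial. The strategy hinges on combining the Artin--Wedderburn decomposition with lemma \ref{2.7} to eliminate any nontrivial division algebra as a possible ``building block.''

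First I would take an arbitrary class $[A]\in Br(k)$ and invoke the Artin--Wedderburn theorem \ref{1.4} to write $A\cong M_n(D)$, where $D$ is a finite dimensional division $k$-algebra whose center is $k$ (so in particular $D$ is a finite dimensional central division algebra over $k$). Next, since $k$ is algebraically closed, lemma \ref{2.7} forces $D\cong k$, and hence $A\cong M_n(k)$.

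At this point lemma \ref{2.4} closes the argument immediately: $A\cong M_n(k)$ is equivalent to $[A]=[k]$. Since $[A]$ was arbitrary, every element of $Br(k)$ equals the identity, so $Br(k)\cong 1$.

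There is no real obstacle here; all the technical work has already been packaged into lemmas \ref{2.4} and \ref{2.7} and theorem \ref{1.4}. The only small point to be careful about is noting that the division algebra $D$ produced by Artin--Wedderburn inherits the hypothesis of being central and finite dimensional over $k$ (from $Z(D)\cong Z(M_n(D))\cong Z(A)\cong k$ and $[D:k]\le [A:k]<\infty$), which is exactly the setting in which lemma \ref{2.7} applies.
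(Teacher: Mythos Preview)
Your proof is correct and essentially identical to the paper's, which simply cites lemmas \ref{2.3} and \ref{2.7}. The only cosmetic difference is that you invoke theorem \ref{1.4} and lemma \ref{2.4} explicitly, whereas the paper's lemma \ref{2.3} already packages the Artin--Wedderburn step and the correspondence between similarity classes and division algebras.
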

    \begin{proof}
    The result is immediate by lemma \ref{2.3} and lemma \ref{2.7}
    \end{proof}

    \chapter{The Brauer Group of Real Numbers}
    \numberwithin{theorem}{section}
    In this chapter we want to identify the group $Br(\R)$. It is mainly based on \cite{drozd2012finite}, unless explicitly cited. We need to prove the famous theorem due to Frobenius on real division algebras. First, we need some tools.

    \section{Maximal Subfield}
    
    \begin{definition}[maximal subfield]\index{maximal subfield}
    A \emph{maximal subfield} of a ring is the fields which are not contained in any larger subfield.
    \end{definition}

    \begin{theorem}\label{3.2}
    Let $A$ be a finite dimensional central simple algebra. $B$ is its simple subalgebra. $B'=C_A(B)$. Then
    \begin{enumerate}[(i)]
        \item $B'$ is simple;
        \item $C_A(B')=B$;
        \item $[A:k]=[B:k][B':k]$;
        \item If $B'\cong M_m(D)$, then $A\otimes B'\cong M_n(D)$ and $m|n$.
    \end{enumerate}
    \end{theorem}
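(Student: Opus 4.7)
The plan is to view $A$ as a left module over $R := A \otimes_k B^{op}$, acting by $(a \otimes b)\cdot x = axb$. First I would verify two key facts about $R$. On the one hand, $R$ is simple and finite-dimensional: Corollary \ref{1.17} applies since $A$ is finite dimensional central simple and $B^{op}$ is simple, so by Artin--Wedderburn (Theorem \ref{1.4}) and the uniqueness Theorem \ref{1.9} one has $R \cong M_n(D)$ for a unique division algebra $D$ and integer $n \geq 1$. On the other hand, $End_R(A) \cong (B')^{op}$: any left-$A$-linear endomorphism of $A$ is right multiplication by some $c \in A$ (since $End_A(A) \cong A^{op}$), and additionally requiring right-$B$-linearity forces $bc = cb$ for every $b \in B$, i.e., $c \in C_A(B) = B'$.

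With these in hand, parts (i), (iii), and (iv) drop out together. Since $R$ is simple Artinian, $A$ decomposes as $V^r$ for the unique simple $R$-module $V$ (Lemma \ref{1.14}), and Schur's lemma together with the direct-sum formula give $End_R(A) \cong M_r(End_R(V)) \cong M_r(D^{op})$; taking opposites then yields $B' \cong M_r(D)$, which is simple, proving (i) and identifying the $m$ of part (iv) as $r$. A dimension count closes (iii) and (iv): from $[R:k] = [A:k][B:k] = n^2[D:k]$, $[V:k] = n[D:k]$, $[A:k] = rn[D:k]$, and $[B':k] = r^2[D:k]$, one reads off both $[A:k] = [B:k][B':k]$ and $n = r[B:k]$, giving $m \mid n$.

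For (ii), I would re-run (i) and (iii) with $B'$ playing the role of $B$: this makes $C_A(B')$ simple of $k$-dimension $[A:k]/[B':k] = [B:k]$, and since $B$ clearly centralizes $B'$ the inclusion $B \subseteq C_A(B')$ forces equality by dimensions. The main obstacle I anticipate is the opposite-algebra bookkeeping in the identification $End_R(A) \cong (B')^{op}$ and the subsequent use of $M_r(D^{op})^{op} \cong M_r(D)$: the conventions must be pinned down so that the $D$ produced by Wedderburn for $R$ really coincides with the one appearing in $B' \cong M_m(D)$. This also fixes the reading of (iv) as the statement about $A \otimes_k B^{op}$, which is exactly what the argument above directly identifies as $M_n(D)$.
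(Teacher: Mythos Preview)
Your approach is correct and essentially identical to the paper's: both view $A$ as a module over $A\otimes_k B^{op}\cong M_n(D)$, identify its endomorphism ring with $B'$ (up to opposites), decompose $A$ as a power of the unique simple module, read off (i), (iii), (iv) from the resulting dimension count, and then obtain (ii) by rerunning the argument with $B'$ in place of $B$. Your opposite-algebra bookkeeping is in fact more careful than the paper's (which writes endomorphisms on the right and is loose about $D$ versus $D^{op}$), and you are right that the ``$A\otimes B'$'' in (iv) should be read as $A\otimes_k B^{op}$, which is exactly what both arguments identify with $M_n(D)$.
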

    
    \begin{proof}
    Consider $A_f$ as a $A$-$B$-bimodule, where the action is $a\cdot x\cdot b=axf(b)$, $f$ is the inclusion map from $B$ into $A$. Then $A_f$  is an $A\otimes_k B^{op}$ module. $A\otimes_k B^{op}$ is simple (\ref{1.17}), so $A\otimes_k B^{op}\cong M_n(D)$ is a semisimple algebra (c.f. proposition 2.3.3 \cite{drozd2012finite}). Let $A_f=U^m$ where $U$ is the simple $A\otimes_k B^{op}$-module (\ref{1.14}) and $U\cong D^n$ (c.f. proposition 2.3.4 \cite{drozd2012finite}). In the proof of  \ref{1.9}, we see $E_{A\otimes_k B^{op}}(U)\cong D^{op}\cong D$. Hence, $$E_{A\otimes_k B^{op}}(A_f) \cong M_m(E_{A\otimes_k B^{op}}(U))\stackrel{\ref{1.9}}{\cong} M_m(D)$$ by lemma 2.19 in \cite{non-comm}.
    
    Take $\phi\in E_{A\otimes_k B^{op}}(A_f)$. $\phi(x)=x\phi(1)=xa_0$ for $a_0:=\phi(1)\in A$. For all $b\in B$, since $\phi$ is right $B$-linear, $b\cdot a_0=b\cdot \phi(1)=\phi(b)=\phi(1)\cdot b=a_0\cdot b$, so $a_0\in C_A(B)=B'$.
    
    Conversely, for any $a_0\in B'$, the right multiplication by $a_0$ is always in $E_{A\otimes_k B^{op}}(A_f)$. Hence $B'\cong E_{A\otimes_k B^{op}}(A_f)\cong M_m(D)$, so $B'$ is simple. (i) is proved.
    
    Now let $d=[D:k]$. Then $[A:k]=m[U:k]=mnd$, $$[A:k][B:k]=[A\otimes B^{op}:k]=[M_n(D):k]=n^2d,\ \ [B':k]=m^2d,$$ so $[B:k]=\frac{n^2d}{mnd}=\frac{n}{m}$, giving $m|n$ and $[A:k]=[B:k][B':k]$. (iii) (vi) are proved.
    
    Let $B''=C_A(B')$. $B\subseteq C_A(C_A(B))$ for sure. Now by symmetry of the argument, $B'$ is simple so $[A:k]=[B'':k][B':k]$. This means $[B'':k]=[B:k]$. This forces $B=B''$. (ii) is proved.
    \end{proof}
    
    \begin{lemma}\label{3.3}
        Let $A$ be a finite dimensional central simple algebra. $L$ be its subfield. Then the following statements are equivalent:
        \begin{enumerate}[(i)]
            \item $L=C_A(L)$;
            \item $[A:k]=[L:k]^2$.
        \end{enumerate}
    \end{lemma}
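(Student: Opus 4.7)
The plan is to apply Theorem \ref{3.2} directly, taking the simple subalgebra $B$ to be the subfield $L$ itself. Since $L$ is a field, it is in particular a simple $k$-subalgebra of $A$, so the theorem applies and gives the dimension formula
\begin{equation*}
    [A:k] = [L:k]\cdot [C_A(L):k].
\end{equation*}
This single identity will carry both implications.

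For the direction (i) $\Rightarrow$ (ii), I would just substitute $C_A(L) = L$ into the displayed identity to get $[A:k] = [L:k]^2$. For (ii) $\Rightarrow$ (i), I would first observe that $L \subseteq C_A(L)$ automatically: every element of $L$ commutes with every other element of $L$ because $L$ is commutative. Combined with the displayed dimension formula, assumption (ii) forces $[C_A(L):k] = [L:k]$, and since $L \subseteq C_A(L)$ with equal finite $k$-dimensions, the inclusion must be an equality.

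There is essentially no obstacle here; the work has all been done inside Theorem \ref{3.2}. The only point that deserves a brief remark is that the theorem requires $B = L$ to be a \emph{simple} subalgebra, which is immediate since a field has no nonzero proper two-sided ideals. I would note that this lemma motivates calling such an $L$ a maximal subfield (anticipating the next section), because (ii) pins down the largest possible $k$-dimension of a subfield of $A$, and (i) is the expected ``self-centralizing'' characterization.
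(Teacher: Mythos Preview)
Your proposal is correct and essentially identical to the paper's own proof: both apply Theorem~\ref{3.2}(iii) with $B=L$ to obtain $[A:k]=[L:k]\cdot[C_A(L):k]$, note the automatic inclusion $L\subseteq C_A(L)$, and conclude by comparing dimensions. Your explicit remark that $L$ is simple (as a field) so that Theorem~\ref{3.2} applies is a nice touch that the paper leaves implicit.
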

    
    \begin{proof}
        Observe $L\subseteq C_A(L)$ for any subfield $L$ of $A$. By \ref{3.2} (iii), we know
        $$[A:k]=[L:k][C_A(L):k]\geq [L:k]^2.$$
        The equality holds if and only if $[C_A(L):k]=[L:k]$ if and only if $C_A(L)=L$.
    \end{proof}
    
    \begin{definition}[strictly maximal]\index{strictly maximal subfields}
        A subfield L of a finite dimensional central simple algebra is \emph{strictly maximal} if it satisfies the equivalent conditions in the last lemma \ref{3.3}.
    \end{definition}
    
    \begin{theorem}\label{3.1.5}
    If $D$ is a finite dimensional division algebra, then
    \begin{enumerate}[(i)]
        \item every maximal subfield is strictly maximal;
        \item if in addition $D$ is central, $D\otimes L\cong M_n(L)$ where $L$ is a maximal subfield and $n=[L:k]$.
    \end{enumerate}
    \end{theorem}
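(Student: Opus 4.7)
The plan is to prove the two parts in turn: (i) is a direct argument from the definition of a maximal subfield, and (ii) follows by viewing $D$ as a left module over $D\otimes_k L$ and computing its endomorphism ring using (i).

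For (i), let $L$ be a maximal subfield; by Lemma \ref{3.3} it suffices to show $C_D(L)=L$. The inclusion $L\subseteq C_D(L)$ is trivial. For the reverse, take any $x\in C_D(L)$ and examine the subring $L[x]\subseteq D$. Since $x$ commutes with every element of $L$ (and with itself), $L[x]$ is commutative; it is finite-dimensional over $k$ because $D$ is; and it is a domain because $D$ is a division algebra. A finite-dimensional commutative domain over a field is a field, so $L[x]$ is a subfield of $D$ containing $L$, and maximality of $L$ forces $L[x]=L$, so $x\in L$.

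For (ii), assume in addition that $D$ is central and set $n=[L:k]$, so $[D:k]=n^2$ by strict maximality. Make $D$ a left $(D\otimes_k L)$-module via $(a\otimes\lambda)\cdot x\defeq ax\lambda$; this is well-defined because left multiplication by $D$ and right multiplication by the commutative $L$ commute by associativity. A $(D\otimes_k L)$-linear endomorphism $\phi$ of $D$ is first forced by left $D$-linearity to be right multiplication by $a_0\defeq\phi(1)$, and then right $L$-linearity forces $a_0\lambda=\lambda a_0$ for all $\lambda\in L$, i.e.\ $a_0\in C_D(L)=L$ by (i). Hence the endomorphism ring $\mathrm{End}_{D\otimes_k L}(D)$ is isomorphic to $L$ (acting by right multiplication). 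On the other hand, Corollary \ref{1.17} shows $D\otimes_k L$ is simple with center $L$, of dimension $n^2$ over $L$; by Artin--Wedderburn write $D\otimes_k L\cong M_r(D')$ for some division $L$-algebra $D'$, and decompose $D\cong U^s$ where $U$ is the unique simple module (Lemma \ref{1.14}). The endomorphism ring is then $M_s((D')^{op})$, which must equal $L$, so $s=1$ and $D'=L$. Dimension count over $L$ now gives $r=n$, and thus $D\otimes_k L\cong M_n(L)$.

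The main delicate point is the module-theoretic bookkeeping in (ii): one must check that right multiplication by $L$ really yields a compatible left-module action of the $L$-factor, and then interpret $\mathrm{End}_{D\otimes_k L}(D)$ correctly so that part (i) can be invoked. Once that scaffolding is in place, the centralizer equality from (i) and simple dimension arithmetic close the argument.
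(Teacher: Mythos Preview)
Your proof is correct and follows essentially the same approach as the paper. For (i) you both show $C_D(L)=L$ by noting that any $x\in C_D(L)$ generates a commutative finite-dimensional domain $L[x]$, hence a field, which maximality forces to equal $L$; for (ii) you both view $D$ as a $D\otimes_k L$-module, identify its endomorphism ring with $C_D(L)=L$, and then apply Artin--Wedderburn plus a dimension count. The only cosmetic difference is that the paper packages the module/endomorphism computation by citing Theorem~\ref{3.2} (where the identity $C_A(B)\cong\mathrm{End}_{A\otimes B^{op}}(A)$ is proved in general), whereas you inline that computation directly for the case $A=D$, $B=L$.
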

    
    \begin{proof}
        If $a\in C_D(L)\setminus L$, take $f(x)\in L[X]$ to be a polynomial and $f(a)$ is a commutative subalgebra of $D$. Any commutative subalgebra of a division algebra is a field (c.f. 1.2.3 \cite{drozd2012finite}), so we have $L\subsetneq f(a)\subseteq D$. The maximality of $L$ implies $L=C_D(L)$. By \ref{3.2}, $L\cong C_A(L)\cong End_{D\otimes L}(D)\cong M_m(D)$. Therefore $m=1$, $D\cong L$ (\ref{1.14}). By $(iv)$ of \ref{3.2} the result follows with a dimension argument showing $n=[L:k]$.
    \end{proof}

    \section{The Frobenius Theorem}
    
    \begin{theorem}[Skolem-Noether]\label{3.2.1}
    If $f$ and $g$ are two homomorphisms of a finite dimensional simple algebra $B$ into a finite dimensional central simple algebra $A$, then there is an invertable element $a\in A$ such that $g(x)=af(x)a^{-1}$ for all $x\in B$. Equivalently, any homomorphism from $A$ to $B$ can be extended into an inner automorphism of $B$.
    
    \end{theorem}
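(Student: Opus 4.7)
The plan is to exploit the two homomorphisms $f,g : B \to A$ by turning $A$ into a $(B \otimes_k A^{op})$-module in two different ways and then comparing these two structures. Concretely, I define actions
\[
  (b \otimes a') \cdot_f x := f(b)\, x\, a', \qquad (b \otimes a') \cdot_g x := g(b)\, x\, a',
\]
which are well defined because left multiplication by elements of $A$ commutes with right multiplication by elements of $A$. Write $A_f$ and $A_g$ for $A$ equipped with these two module structures respectively; both have the same $k$-dimension, namely $[A:k]$.

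Next, I would observe that $A^{op}$ is finite dimensional central simple (since $Z(A^{op}) = Z(A) = k$), so by Corollary \ref{1.17} the enlarged algebra $B \otimes_k A^{op}$ is simple; being finite dimensional it is simple Artinian and by Artin--Wedderburn \ref{1.4} isomorphic to $M_n(D)$ for some division ring $D$. Hence all simple $(B \otimes_k A^{op})$-modules are isomorphic by Lemma \ref{1.14}, every module is a direct sum of copies of this unique simple module, and isomorphism type is detected by $k$-dimension. Since $A_f$ and $A_g$ have the same dimension, there is a $(B \otimes_k A^{op})$-linear isomorphism $\phi : A_f \to A_g$.

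Now I would unpack what $\phi$ must look like. Applying $\phi$ to the action of $1 \otimes a'$ yields $\phi(xa') = \phi(x) a'$ for all $x, a' \in A$, so setting $a := \phi(1)$ gives $\phi(y) = a y$ for every $y \in A$. Applying instead the action of $b \otimes 1$ gives $\phi(f(b) x) = g(b) \phi(x)$, i.e.\ $a f(b) x = g(b) a x$ for all $x$; putting $x = 1$ yields the key identity $a f(b) = g(b) a$.

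It remains to show $a$ is invertible in $A$, which is the small point requiring some care. Because $\phi$ is bijective, left multiplication by $a$ is both injective and surjective on the finite dimensional $k$-vector space $A$, so $a$ is a non-zero-divisor; in a finite dimensional $k$-algebra, left multiplication by a non-zero-divisor being surjective produces a $c \in A$ with $ac = 1$, and the standard two-sided argument (if $ac=1$ then left multiplication by $a$ is surjective, hence injective by finite dimensionality, hence has a left inverse which must coincide with $c$) shows $a$ is a unit. Rewriting $af(b) = g(b)a$ then gives $g(b) = a f(b) a^{-1}$, as required. The main subtle step is really the module-theoretic input --- recognising that the apparently formal device of viewing $A$ as a $(B \otimes_k A^{op})$-module in two ways converts the problem into a clean statement about uniqueness of module structure --- whereas the extraction of $a$ and its invertibility are routine once the isomorphism $\phi$ is in hand.
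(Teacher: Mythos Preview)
Your proof is correct and follows essentially the same approach as the paper: turn $A$ into two modules over a simple tensor algebra using $f$ and $g$, invoke uniqueness of modules of a given dimension over a simple Artinian ring, and read off the conjugating element from the resulting isomorphism. The only cosmetic difference is that the paper uses $A\otimes_k B^{op}$ (so $\phi$ becomes right multiplication by $\alpha$) while you use $B\otimes_k A^{op}$ (so $\phi$ becomes left multiplication by $a$); you are also more explicit than the paper about why the element is invertible.
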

    \begin{proof}
    Denote two copies of $A$, $A_f$, $A_g$ as $A\otimes_k B^{op}$ modules via the action $$\sum (a\otimes b)\cdot x=\sum a\cdot x\cdot f(b)\quad \sum (a\otimes b)\cdot y=\sum a\cdot y\cdot g(b),$$ for $a\in A$, $b\in B^{op}$, $x\in {A_f}$, $y\in {A_g}$ respectively. By corollary \ref{1.17}, $A\otimes_k B^{op}$ is simple, finite dimensional. Hence all its modules are direct sum of the unique simple module (\ref{1.14}). Since $[A_f:k]=[A_g:k]$, $A_f\cong{A_g}$ as $A\otimes_k B^{op}$ modules.
    
    Let $\phi:{A_f}\cong{A_g}$ be this $A\otimes_k B^{op}$-module isomorphism. Considering ${A_f}$ and ${A_g}$ as left $A$-regular module, we have $\phi(x)=x\cdot \alpha$ for fixed invertable $\alpha=\phi(1_A)\in A$
    
    Considering ${A_f}$, ${A_g}$ as right $B^{op}$-modules, we have
    \[      \phi(x\cdot f(b))=\phi(x)\cdot g(b) \ for\ all\  x\in {A_f} \text{, } b\in B       \]
    Taking $x=1_A$ gives $\phi(f(b))=\phi(1_A)g(b)$. This is $f(b)\alpha=\alpha g(b)$ for all $b\in B$. Hence $f(b)=\alpha g(b)\alpha^{-1}$, $\alpha$ is an invertable element in $A$.
    \end{proof}
    
    \begin{corollary}
    Every automorphism of a finite dimensional central simple algebra is inner. In particular, every automorphism of the algebra $M_n(k)$ is inner.
    \end{corollary}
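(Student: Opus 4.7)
The plan is to derive this corollary as an essentially immediate application of the Skolem-Noether theorem \ref{3.2.1} just proved. Let $A$ be a finite dimensional central simple $k$-algebra, and let $\sigma : A \to A$ be a $k$-algebra automorphism. I would instantiate Skolem-Noether with $B \defeq A$ (which is simple and finite dimensional, hence qualifies as the domain) and with the two homomorphisms $f \defeq \mathrm{id}_A$ and $g \defeq \sigma$, both mapping $B = A$ into the finite dimensional central simple algebra $A$.

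Applying \ref{3.2.1} directly, I obtain an invertible element $a \in A$ such that
\[
  g(x) = a\, f(x)\, a^{-1} \quad \text{for all } x \in A,
\]
which unpacks to $\sigma(x) = a x a^{-1}$ for every $x \in A$. This is exactly the statement that $\sigma$ is an inner automorphism.

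For the second assertion, I only need to observe that $M_n(k)$ is a finite dimensional central simple $k$-algebra: it is finite dimensional with $[M_n(k):k] = n^2$, its center is $k$ (e.g.\ by the lemma in Section~1.1 on centers of matrix rings), and it is simple by the same lemma since $k$ itself is simple. Hence the general statement applies verbatim to $M_n(k)$, giving the "in particular" clause.

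There is no substantive obstacle here; the corollary is a literal rewording of Skolem-Noether in the special case $B = A$ with $f$ the identity, and the only content beyond that is the remark that $M_n(k)$ satisfies the hypotheses. I would therefore keep the proof to a couple of lines, citing \ref{3.2.1} for the first sentence and the centrality/simplicity of $M_n(k)$ for the second.
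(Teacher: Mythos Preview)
Your proof is correct and essentially identical to the paper's: both take $B=A$ in Skolem--Noether with one of the two maps equal to the identity (the paper sets $g=\mathrm{Id}$, you set $f=\mathrm{id}_A$), which is an immaterial difference. Your added justification that $M_n(k)$ is central simple is a reasonable gloss the paper omits as understood.
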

    \begin{proof}
        Take $B=A$ and $g=Id$, the identity map, will give the result.
    \end{proof}
    
    Now we are ready to prove the famous
    
    \begin{theorem}[Frobenius Theorem]\label{3.8}
        The only finite dimensional division algebra over $\R$ are $\R$, $\C$, and $\H$.
    \end{theorem}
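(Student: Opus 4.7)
The plan is to let $D$ be a finite dimensional $\R$-division algebra, pass to its center to reduce to the central case, and then within the central case use maximal subfield analysis together with Skolem-Noether to pin down the structure.

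First I would let $k = Z(D)$. Since $k$ is a finite field extension of $\R$, we have $k \cong \R$ or $k \cong \C$. If $k \cong \C$, then $D$ is a finite dimensional central division algebra over the algebraically closed field $\C$, so by Lemma \ref{2.7} we conclude $D = \C$.

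So I may assume $D$ is central over $\R$. I would choose a maximal subfield $L \subseteq D$. By Theorem \ref{3.1.5}(i) every maximal subfield of a finite dimensional division algebra is strictly maximal, hence $[D:\R] = [L:\R]^2$. Since $L/\R$ is finite, $L \cong \R$ or $L \cong \C$. If $L = \R$ then $[D:\R] = 1$ and $D = \R$.

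The remaining and main case is $L \cong \C$, forcing $[D:\R] = 4$; I would show $D \cong \H$. Write $L = \R(i)$ with $i^2 = -1$ inside $D$. Complex conjugation $\sigma: L \to L$ sending $i \mapsto -i$ is a nontrivial $\R$-algebra automorphism of $L$. The inclusion $\iota: L \hookrightarrow D$ and the composite $\iota \circ \sigma: L \hookrightarrow D$ are two homomorphisms of the simple algebra $L$ into the finite dimensional central simple $\R$-algebra $D$, so Skolem-Noether (Theorem \ref{3.2.1}) produces an invertible $j \in D$ with $j i j^{-1} = -i$, i.e.\ $ji = -ij$. A direct calculation shows $j^2$ commutes with $i$, hence $j^2 \in C_D(L) = L$ by strict maximality; writing $j^2 = a + bi$ and using $j \cdot j^2 = j^2 \cdot j$ together with $ji = -ij$ forces $b = 0$, so $j^2 \in \R$. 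If $j^2 = a \geq 0$ then $a = c^2$ gives $(j-c)(j+c) = 0$ in the division algebra $D$, so $j = \pm c \in \R$, contradicting $ji = -ij \neq ij$. Hence $a < 0$; replacing $j$ by $j/\sqrt{-a}$ I may assume $j^2 = -1$. Then $1, i, j, ij$ satisfy the quaternion relations, are $\R$-linearly independent in $D$ (the first three by the argument above; $ij$ is independent since otherwise $ij \in \R + \R i + \R j$ and one derives a contradiction from anticommutation), and span a $4$-dimensional subalgebra, which by $[D:\R]=4$ must equal $D$. Therefore $D \cong \H$.

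The main obstacle is producing the anticommuting element $j$ and verifying $j^2 \in \R_{<0}$; everything else is dimension-counting. Skolem-Noether is doing the real work, converting the abstract Galois automorphism $\sigma$ of $L/\R$ into a concrete conjugation inside $D$, while strict maximality of $L$ (via Theorem \ref{3.1.5}) is what confines $j^2$ back into $L$ and ultimately into $\R$.
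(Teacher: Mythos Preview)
Your proof is correct and follows essentially the same route as the paper: reduce via the center and Lemma~\ref{2.7} to the case $Z(D)=\R$, use strict maximality of a maximal subfield (Theorem~\ref{3.1.5}) to force $[D:\R]\in\{1,4\}$, and in the $4$-dimensional case invoke Skolem--Noether to produce $j$ with $ji=-ij$, then pin down $j^2\in\R_{<0}$ exactly as you do. The only cosmetic differences are that the paper treats the center after establishing $[D:\R]=4$ rather than at the outset, and concludes by building an algebra homomorphism $\H\to D$ (injective since $\H$ is simple, bijective by dimension) instead of directly checking linear independence of $1,i,j,ij$.
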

    \begin{proof}
        Let $D$ be a finite dimensional division algebra over $\R$. Take $a\in D$ and $m_a(x)$ be the minimal polynomial of the element $a$ over $\R$. Since $D$ is a division algebra, $m_a(x)$ is irreducible by minimality. By fundamental theorem of algebra, $m_a(x)$ is of degree one or two. 
        
        If $m_a(x)$ is of degree one, the $a\in \R$ and we get $D\cong \R$. 
        
        If $m_a(x)$ is of degree two, let $a=u+vi$, $u,v\in \R$, $v\neq 0$. Then $a^2=u^2-v^2+2uvi$ and $a$ satisfy $x^2-2ux+u^2+v^2=0$. Hence $a$ satisfy a quadratic of the form $x^2+2px+q$ where $p^2< q$. Now completing the square $(x+p)^2+q-p^2$. We can see $\frac{x+p}{\sqrt{q-p^2}}$ is a root of the polynomial $x^2+1$. Therefore, the subfield $\R[a]\cong \C$. $\C$ is algebraically closed, so $D\cong \C$ in this case.
        
        Thus the finite field extensions of $\R$ are $\R$, $\C$. 
        
        To explore other cases, consider $D\neq \R$, $\C$. Let $L$ be the maximal subfield of $D$. $L\neq \R$ otherwise $D=\R$ by \ref{3.1.5}. Hence $L=\C$, $[L:\R]=2$. By \ref{3.1.5} again $[D:\R]=4$. The center of $D$ is $Z(D)=\R$ because the only finite dimensional division algebra over $\C$ is $\C$ by \ref{2.7}.
        
        Let $i$ be the element in $L$ with $i^2=-1$. The complex conjugation map is an automorphism of $L$ in $D$ which sends $i\mapsto -i$. Therefore by Skolem-Noether theorem \ref{3.2.1}, considering the embeddings of $L$ into $D$, there exist an invertable $j\in D$ such that $-i=jij^{-1}$, i.e., $ji=-ij$.
        
        Since $j$ does not commute with $i$, $j\notin L$. $\{1,i,j\}$ are linearly independent. Observe that $j^2 i=-jij=ij^2$ so $j^2\in C_D(L)=L$ by \ref{3.2}. Thus $j^2=\alpha+\beta i$, for $\alpha,\beta\in \R$. But $j^2$ commutes with $j$ so $j(\alpha+\beta i)=(\alpha+\beta i)j$. This implies $\beta=0$. $j^2=\alpha\in \R$. If $\alpha>0$, $(j-\sqrt{a})(j+\sqrt{a})=0$ and $j\in \R\subseteq L$. This is impossible so $\alpha<0$. Replacing $j$ by $j/\sqrt{-\alpha}$ we have $j^2=-1$.
        
        Now let $k=ij$. We have $k^2=ijij=-1$, $ik=i^2j=-j$ and $ki=iji=-i^2j=j$. Similarly, $jk=-kj=i$. Therefore $i,j,k$ satisfy all the relations of the canonical basis of $\H$. There is a homomorphism $\psi: \H\to D$. Since $\H$ is a division algebra, $\psi$ is injective. $[\H:\R]=[D:\R]$ implies $\psi$ is an isomorphism. This concludes the proof.
    \end{proof}
    
    \begin{lemma}\label{3.9}
    $\H\otimes_k \H \cong M_4(\R)$.
    \end{lemma}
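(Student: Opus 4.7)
The plan is to reduce the statement to the enveloping algebra theorem (Theorem \ref{1.13}) applied to $\mathbb{H}$, together with the classical fact that $\mathbb{H}$ is anti-isomorphic to itself via quaternion conjugation. Since $\mathbb{H}$ is a division algebra with center $\mathbb{R}$, it is a finite dimensional central simple $\mathbb{R}$-algebra of dimension $[\mathbb{H}:\mathbb{R}]=4$, so Theorem \ref{1.13} immediately gives
\[
    \mathbb{H}\otimes_{\mathbb{R}}\mathbb{H}^{op}\cong M_4(\mathbb{R}).
\]
Thus it suffices to produce an $\mathbb{R}$-algebra isomorphism $\mathbb{H}\cong \mathbb{H}^{op}$.

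First I would construct that isomorphism explicitly. Define $\sigma:\mathbb{H}\to \mathbb{H}$ by quaternion conjugation,
\[
    \sigma(a+bi+cj+dk)\defeq a-bi-cj-dk.
\]
This map is clearly $\mathbb{R}$-linear and bijective, and by direct checking on the generators $i,j,k$ (using $ij=k$, $ji=-k$, etc.) one verifies the anti-multiplicativity $\sigma(qr)=\sigma(r)\sigma(q)$. Reading $\sigma$ as a map $\mathbb{H}\to \mathbb{H}^{op}$ turns the anti-multiplicativity into multiplicativity with respect to $\ast_{op}$, so $\sigma$ becomes an $\mathbb{R}$-algebra isomorphism $\mathbb{H}\xrightarrow{\sim}\mathbb{H}^{op}$.

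Combining the two facts, I would conclude
\[
    \mathbb{H}\otimes_{\mathbb{R}}\mathbb{H}\ \cong\ \mathbb{H}\otimes_{\mathbb{R}}\mathbb{H}^{op}\ \cong\ M_4(\mathbb{R}),
\]
where the first isomorphism is induced by $\mathrm{id}\otimes \sigma$. There is essentially no hard step: the only thing to be careful about is that the conjugation is an \emph{anti}-automorphism of $\mathbb{H}$, not an automorphism, which is exactly what is needed to land in $\mathbb{H}^{op}$. One could alternatively give a direct proof by exhibiting an explicit homomorphism $\mathbb{H}\otimes_{\mathbb{R}}\mathbb{H}\to M_4(\mathbb{R})$ via left/right multiplication on $\mathbb{H}\cong \mathbb{R}^4$ (sending $p\otimes q$ to $x\mapsto p\,x\,\bar{q}$) and then using simplicity of the left-hand side together with a dimension count to conclude it is an isomorphism, but the route through Theorem \ref{1.13} is considerably shorter.
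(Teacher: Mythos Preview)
Your proof is correct and follows essentially the same approach as the paper: exhibit quaternion conjugation $a+bi+cj+dk\mapsto a-bi-cj-dk$ as an $\R$-algebra isomorphism $\H\to\H^{op}$, then invoke Theorem~\ref{1.13} to conclude $\H\otimes_\R\H\cong\H\otimes_\R\H^{op}\cong M_4(\R)$. The paper's version is just slightly more terse, omitting the verification of anti-multiplicativity and the alternative left/right multiplication approach you mention.
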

    \begin{proof}
        Construct a map $\phi:\H\to\H^{op}$, $a+bi+cj+dk\mapsto a-bi-cj-dk$, for $a,b,c,d\in \R$. It is easy to check that this is an $\R$-algebra isomorphism. Thus $\H\cong \H^{op}$ and by theorem \ref{1.13}, $\H\otimes_k \H \cong M_4(\R)$.
    \end{proof}
    
    \begin{corollary}
    $Br(\R)\cong C_2$.
    \end{corollary}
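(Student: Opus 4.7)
The plan is to enumerate the central division algebras over $\R$ using the Frobenius theorem, then use Lemma \ref{3.9} to compute the group structure.

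First, I would recall that by Artin--Wedderburn and Lemma \ref{2.3}, every similarity class in $Br(\R)$ is uniquely represented by a finite dimensional \emph{central} division algebra over $\R$. So the elements of $Br(\R)$ are in bijection with isomorphism classes of such division algebras. By Frobenius's theorem \ref{3.8}, the finite dimensional division algebras over $\R$ are exactly $\R$, $\C$, and $\H$. Among these, I need to discard those whose center is strictly larger than $\R$: the algebra $\C$ has center $\C \neq \R$, so it is not central over $\R$. On the other hand $Z(\R) = \R$ trivially, and $Z(\H) = \R$ (a standard direct check, since $i,j,k$ anti-commute pairwise with each other). Hence there are exactly two isomorphism classes of finite dimensional central division $\R$-algebras, namely $\R$ and $\H$, giving $|Br(\R)| = 2$.

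Next, I would identify the group operation. The identity is $[\R]$ by Theorem \ref{2.6}. The only remaining class is $[\H]$, and by Lemma \ref{3.9} we have
\[
[\H]\cdot[\H] \;=\; [\H \otimes_\R \H] \;=\; [M_4(\R)] \;=\; [\R],
\]
where the last equality is Lemma \ref{2.4}. Thus $[\H]$ has order $2$, and $Br(\R)$ is the cyclic group of order $2$.

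There is essentially no obstacle here; all the substantive work has been done in Theorem \ref{3.8} and Lemma \ref{3.9}. The only minor subtlety is the centrality check that rules out $\C$ as a representative, which is immediate. So the proof is a short assembly of the preceding results.
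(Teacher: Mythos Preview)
Your proof is correct and follows essentially the same approach as the paper: enumerate the finite dimensional central division $\R$-algebras via Frobenius's theorem \ref{3.8} (discarding $\C$ for lack of centrality), then use Lemma \ref{3.9} to see $[\H]^2=[\R]$. The paper's version is terser, but the content is identical.
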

    \begin{proof}
      Since the only 2 non-isomorphic finite dimensional central division algebra over $\R$ is $\R$ and $\H$, by Frobenius theorem \ref{3.8}. $[\H]^2=[\R]$ by lemma \ref{3.9}. $Br(\R)$ is generated by $[\H]$ so it is isomorphic to $C_2$.
    \end{proof}

\chapter{The Cyclic Algebra}

    This chapter introduces an important type of central simple algebra, the cyclic algebra, as a special case of the crossed product. We will see from the later chapters that all central simple algebras over a local field are of this form.
    
    The content of this chapter is mainly based on \cite{drozd2012finite}, unless cited otherwise.
    
\section{The Crossed Product}
    For terminologies and classical results of Galois Theory, the reader is referred to chapter 5 of \cite{drozd2012finite}.

    \begin{lemma}[Noether]\label{4.1.1}
       Let $D$ be a finite dimensional central division algebra over $k$. Then there is a maximal subfield $L\in D$ which is separable over $k$.
       
    \end{lemma}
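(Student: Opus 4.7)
If $\mathrm{char}(k)=0$, every algebraic extension of $k$ is separable, so any maximal subfield of $D$ (which exists by finite-dimensionality) already satisfies the conclusion. So the interesting case is $p := \mathrm{char}(k) > 0$, which I treat below.

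The plan is to pick $L \subseteq D$ to be a subfield of maximal $k$-dimension among those separable over $k$, and then to show $L$ is actually a maximal subfield of $D$. By Lemma \ref{3.3} this is equivalent to $C_D(L) = L$. Suppose for contradiction that $\bar L := C_D(L) \supsetneq L$. Theorem \ref{3.2} gives that $\bar L$ is simple with $C_D(\bar L) = L$, so
\[Z(\bar L) \;=\; \bar L \cap C_D(\bar L) \;=\; \bar L \cap L \;=\; L,\]
making $\bar L$ a nontrivial finite-dimensional central division algebra over $L$. Maximality of $L$ then forces every element of $\bar L$ to be purely inseparable over $L$, since a separable $y \in \bar L \setminus L$ would yield the strictly larger separable subfield $L(y) \subseteq D$, contradicting the choice of $L$.

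Next I pick $a \in \bar L \setminus L$ with $a^p \in L$ (take any $y \in \bar L \setminus L$ and a suitable $p$-power). The inner derivation $\delta_a(x) = ax - xa$ on $\bar L$ decomposes as $\delta_a = L_a - R_a$ with $L_a, R_a$ commuting, so in characteristic $p$ one has $\delta_a^p = L_{a^p} - R_{a^p} = \delta_{a^p} = 0$, since $a^p \in L = Z(\bar L)$. Thus $\delta_a$ is nilpotent and, as $a \notin L$, nonzero; so I can choose $b \in \bar L$ with $c := \delta_a(b) \neq 0$ and $\delta_a(c) = 0$. Then $c$ is a unit commuting with $a$, and $u := bc^{-1}$ satisfies the Weyl-type relation $[a,u] = 1$ in $\bar L$.

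Setting $z := au$, the classical identity $z^p - z = a^p u^p$ holds in characteristic $p$ (proved by induction on $p$ using $ua = au - 1$). The element $u^p$ commutes with $a$ because $[a,u^p] = pu^{p-1}=0$, so $u^p \in C_{\bar L}(a)$. If $u^p$ lies in $L$, then $z$ satisfies the Artin--Schreier polynomial $T^p - T - a^p u^p \in L[T]$, which is separable (derivative $-1 \neq 0$); and $z \notin L$ (otherwise $z = au$ would be central, giving $[a,u]=0$), so $z$ is a separable element of $\bar L \setminus L$, the desired contradiction. The main obstacle will be the case $u^p \notin L$: here I expect to either iterate the construction inside the smaller central division algebra $C_{\bar L}(L(a))$ over the field $L(a)$ (induction on $[\bar L : L]$), or to directly exhibit $z^p$ as separable over $L$ by eliminating $u^p$ between $z^p - z = a^p u^p$ and the pure-inseparability equation of $u^p$ over $L$ and checking that the resulting polynomial in $z^p$ is of Artin--Schreier type.
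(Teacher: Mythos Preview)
Your overall strategy --- take $L$ maximal among separable subfields and derive a contradiction from $C_D(L)\supsetneq L$ --- is sound, and the derivation/Weyl construction through $[a,u]=1$ is correct (including the identity $z^p-z=a^pu^p$, which does hold in any associative algebra of characteristic $p$ with $[a,u]=1$). The genuine gap is exactly the one you flag: when $u^p\notin L$ the Artin--Schreier polynomial $T^p-T-a^pu^p$ does not have coefficients in $L$, and your proof stops there with only a sketch.

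The fix is shorter than either of your two proposed routes. From $z=au$ and $[a,u]=1$ one computes $[z,a]=za-az=-a$. Since $\mathrm{ad}_z$ is a derivation and $(\mathrm{ad}_z)^p=\mathrm{ad}_{z^p}$ in characteristic $p$, an easy induction gives $[z^{p^n},a]=\pm a\neq 0$ for every $n\geq 0$; hence $z^{p^n}\notin L=Z(\bar L)$ for all $n$. But by your standing hypothesis every element of $\bar L$ is purely inseparable over $L$, so $z^{p^m}\in L$ for some $m$ --- contradiction. (In particular the Artin--Schreier polynomial is not needed at all; if you want to keep it, the same observation shows $z^{p^{m-1}}\notin L$, and $z^{p^{m-1}}$ satisfies the separable polynomial $T^p-T-a^{p^m}u^{p^m}\in L[T]$ once $u^{p^m}\in L$.)

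For comparison, the paper organises the proof differently: it first produces a single element of $D\setminus k$ separable over $k$, and then runs an induction on $[D:k]$ via the centraliser $C_D(k(a))$. Its key construction yields an element $c$ with $[c,b]=b$ (where $b^p\in k$), from which $c^{p^n}\notin k$ for all $n$ by the same Frobenius-of-derivation identity, while some $c^{p^n}$ is separable. Your $z$ satisfies the mirror relation $[z,a]=-a$, so the algebraic core is the same; the difference is packaging (maximal separable subfield versus bottom-up induction). Your framework is arguably cleaner once the missing observation above is inserted, since it avoids the separate inductive step entirely.
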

    
    \begin{proof}
    If $char(k)=0$, then $k$ is perfect so every subfield of $D$ is separable. We can assume $char(k)=p>0$.
    We shall construct an element in $D\setminus k$ which is separable over $k$.
    Take any $a\in D\setminus k$. Let $f(x)=m_a(x)\in k[X]$. If $a$ is not separable, then $f(x)$ has a multiple root $\alpha$ (a root with multiplicity greater than $1$). Then $gcd(f,f')\neq 1$ as they have common factor $(x-\alpha)$ in the splitting field. $f$ is irreducible and $deg f'<deg f$. This forces $f'(x)=0$. Thus, $f(x)$ is a polynomial in $x^p$. Now let $f(x)=g(x^p)$, for some $g(x)\in k[X]$. If $g(x)$ is separable then we find $a^p$ as root of $g(x)$ which is separable, otherwise continue the above process and finally we can find $a^{p^n}$ is separable but not $a^{p^{n-1}}$, for some $n\geq 1$. Denote $b=a^{p^{n-1}}$, so $b^p$ is separable.
    
    If $b^p\notin k$, we are done, otherwise, consider the map $\delta:D \to D$, $d\mapsto db-bd$. Since $b \notin k$, $b^p\in k=Z(D)$, there exist $d_0\in D$ such that $\delta(d_0)\neq 0$, $\delta^p(d_0)=d_0b^p-b^pd_0=0$.
    
    Let $m$ be the least number such that $\delta^m(d_0)=0$. Let $t:=\delta^{m-1}(d_0)$. $w=\delta^{m-2}(d_0)$. $u=b^{-1}t$. Then $t=\delta(w)=wb-bw$. Since $\delta(t)=0$, $ub=bu$. Now $$b=tu^{-1}=(wb-bw)u^{-1}=wbu^{-1}-bwu^{-1}=wu^{-1}b-bwu^{-1}=\delta(wu^{-1})$$
    
    Let $c=wu^{-1}$, $b=cb-bc$ so $c=1+bcb^{-1}$. By the same argument above for $a$ we can show $c^{p^n}$ is separable for some $n\geq 0$. $c^{p^n}=1+bc^{p^n}b^{-1}$ so $c^{p^n}$ does not commute with $b$. $c^{p^n}\notin Z(D)=k$. We are done with finding a separable element over $k$ but not in $k$.
    
    Now we prove the lemma by induction on $[D:k]$. For $[D:k]=1$, this is trivial. Suppose it is true for all division algebra with less dimensions over $k$.
    
    Pick $a\in D\setminus k$ separable over $k$. Let $F=k(a)$. $D_1=C_D(F)$. Then $F=C_D(D_1)$, $[D:k]=[D_1:k][F:k]$ (theorem \ref{3.2}). As a result, $F=Z(D_1)$ because $F\subseteq D_1$.
    
    Since $[D_1:k]<[D:k]$, there is a maximal subfield $L$ of $D_1$ which is separable over $F$ and $[D_1:F]=[L:F]^2$(lemma \ref{3.3}). Now $$[D:k]=[D_1:k][F:k]=([D_1:F][F:k])[F:k]=[L:F]^2[F:k]^2=[L:k]^2.$$ Thus $L$ is a maximal subfield of $D$ (lemma \ref{3.3}). Moreover, $F$ is separable over $k$ and $L$ is separable over $F$, so $L$ is separable over $k$ (c.f. 5.3.7 of \cite{drozd2012finite}). This concludes the proof.
        
    \end{proof}
    
    \begin{definition}[splitting field]\index{splitting field}
        A field $L$ is call a \emph{splitting field} of a central simple algebra $A$ if $A\otimes_k L\cong M_n(L)$ for some $n$.
    \end{definition}
    
    Let $L/k$ be a finite field extension. For every finite dimensional central division algebra $D$,  $D_L\defeq D\otimes_k L$ is central simple over $L$. $D_L\cong M_n(D')$ where $D'$ is a central simple division algebra over $L$. Then $\phi:Br(k)\to Br(L)$, $D\mapsto D'$ is a group homomorphism by direct verification with $ker(\phi)=\{D: D\otimes L\cong M_n(L)\text{ for some } n\}$. This is the same as saying $L$ is a splitting field of $D$. This subgroup is denoted as in the following definition.
    
    \begin{definition}[$Br(L/k)$]\index{$Br(L/k)$}
    $Br(L/k)\defeq \{D\in Br(k):L \text{ is a splitting field of } D \}$
    \end{definition}
    
    \begin{corollary}\label{4.1.4}
    Every finite dimensional central simple algebra has a Galois splitting field. In other words, $Br(k)=\bigcup_L Br(L/k)$ where $L$ is a Galois extension of $k$.
    \end{corollary}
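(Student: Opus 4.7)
The plan is to reduce to the division algebra case and then pass to a Galois closure of a separable maximal subfield. First, by Artin--Wedderburn (Theorem \ref{1.4}), every finite dimensional central simple $k$-algebra $A$ is isomorphic to $M_n(D)$ for some finite dimensional central division $k$-algebra $D$. Since $[A]=[D]$ in $Br(k)$ and the property of being a splitting field depends only on the similarity class (if $D\otimes_k L\cong M_r(L)$, then $M_n(D)\otimes_k L\cong M_{nr}(L)$, and conversely one recovers the same division algebra on extending scalars), it suffices to produce a Galois splitting field for $D$.

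Next, I would apply Noether's lemma (Lemma \ref{4.1.1}) to obtain a maximal subfield $L\subseteq D$ that is separable over $k$. Theorem \ref{3.1.5}(ii) then immediately yields
\[
D\otimes_k L \;\cong\; M_n(L), \qquad n=[L:k],
\]
so $L$ is already a splitting field of $D$, although not necessarily Galois over $k$.

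To upgrade $L$ to a Galois splitting field, let $M$ be the Galois closure of $L$ over $k$ inside a fixed algebraic closure of $k$. Since $L/k$ is finite and separable, $M/k$ is a finite Galois extension containing $L$. Base change along $L\hookrightarrow M$ gives
\[
D\otimes_k M \;\cong\; (D\otimes_k L)\otimes_L M \;\cong\; M_n(L)\otimes_L M \;\cong\; M_n(M),
\]
showing that $M$ is a Galois splitting field of $D$, and hence of the original $A$. The reformulation $Br(k)=\bigcup_L Br(L/k)$ with $L/k$ Galois then follows: each class $[A]\in Br(k)$ lies in $Br(M/k)$ for the $M$ constructed from its division algebra representative, while the reverse inclusion is trivial.

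The substantive input is really Noether's lemma, which is already in hand; the new ingredient specific to this corollary is the separable-to-Galois upgrade, and this is the only step where something could go wrong. I expect the main obstacle (really, the only one worth checking carefully) is the base change isomorphism $(D\otimes_k L)\otimes_L M\cong D\otimes_k M$, together with the fact that the Galois closure of a finite separable extension is itself finite Galois; both are standard, so no serious difficulty is anticipated.
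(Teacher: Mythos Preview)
Your proposal is correct and follows essentially the same route as the paper: reduce to the division algebra via Artin--Wedderburn, invoke Noether's lemma (Lemma~\ref{4.1.1}) together with Theorem~\ref{3.1.5}(ii) to obtain a separable splitting field, then pass to its Galois closure and use the base-change isomorphism $(D\otimes_k L)\otimes_L M\cong D\otimes_k M$ to conclude. The paper carries out the final tensor computation for $A$ rather than $D$, but this is a cosmetic difference.
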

    
    \begin{proof}
    Let $A$ be the finite dimensional central simple algebra. $A\cong M_n(D)$. By \ref{4.1.1}, there is a separable $L\subseteq D$ and $D\otimes L\cong M_m(L)$. From classical Galois theory we know there is a Galois extension $L'$ of $k$ containing $L$(c.f. 5.4.5 in \cite{drozd2012finite}). $L'$ is a splitting field because $$A\otimes L'\cong A\otimes L\otimes L'\cong M_n(k)\otimes(D\otimes L)\otimes L'\cong M_n(k)\otimes M_m(L)\otimes L'\cong M_{nm}(L').$$ This concludes the proof.
    \end{proof}
    
    We now work towards the construction of crossed product.
    
    \begin{definition}[factor set/cocycle]\index{factor set}\index{cocycle}\index{cocycle!group}
        Let $L$ be a Galois extension of $k$. $G=Gal(L/k)$. Then a map $\gamma: G\times G\to L^\times$, where $L^\times$ is the multiplicative group of $L$, is called \emph{factor set} or \emph{cocycle} of $G$ in $L^\times$ if it satisfies the equation
        \[ \gamma_{\sigma,\tau}\gamma_{\sigma\tau,\rho}=\sigma(\gamma_{\tau,\rho})\gamma_{\sigma,\tau\rho},  \ \sigma,\tau,\rho\in G\]
    The functions form a group called \emph{cocycle group} $Z^2(G,L^\times)$.\index{$Z^2(G,L^\times)$}
    \end{definition}
    
    \begin{definition}[crossed product]\index{crossed product}
        Let $L/k$ be Galois extension and $G=Gal(L/k)$. $\gamma$ is a factor set of $G$ in $L^\times$. Then the \emph{crossed product} $A\defeq (G,L,\gamma)$ consists of formal linear combinations $\sum_{\sigma\in G} a_\sigma u_\sigma$ for $a_\sigma\in L$, $u_\sigma$ are formal symbols indexed by $\sigma\in G$. 
        
        An algebra structure is defined on $A$ with coordinate-wise addition and the multiplication is determined by $$u_\sigma a=\sigma(a)u_\sigma \quad u_\sigma u_\tau=\gamma_{\sigma,\tau}u_{\sigma\tau}\quad \text{for all } a\in L,\ \sigma,\tau\in G.$$ Elements in $L$ multiplies in the usual way as in $L$.
    \end{definition}
    
    \begin{remark}
        The algebra is associative by the definition of the factor set, as one could verify directly.
    \end{remark}
    
    \begin{theorem}\label{4.1.7}
        In the above setting, $A=(G,L,\gamma)$ is central simple over $k$. $C_A(L)=L$ and $L$ is a splitting field of $A$. Moreover, $[A:k]=[L:k]^2$.
    \end{theorem}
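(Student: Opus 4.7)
The plan is to verify the four assertions in the natural order: dimension, centralizer, simplicity, centrality, and splitting. The first two are immediate from the defining multiplication rules. Since $A$ is a free left $L$-module with basis $\{u_\sigma\}_{\sigma \in G}$, we have $[A : L] = |G| = [L:k]$, so $[A:k] = [L:k]^2$. For $C_A(L) = L$, I would write an arbitrary $x = \sum_{\sigma} a_\sigma u_\sigma$ and use $xb = bx$ for $b \in L$: the rule $u_\sigma b = \sigma(b) u_\sigma$ rewrites this as $a_\sigma\sigma(b) = b a_\sigma$ coefficientwise, and picking, for each $\sigma \neq 1$, a $b \in L$ with $\sigma(b) \neq b$ forces $a_\sigma = 0$; so $x = a_1 u_1 \in L$ (after normalizing the cocycle so that $u_1 = 1$, which is harmless here).

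The core step is simplicity. I would argue by contradiction: let $I \ideal A$ be a nonzero two-sided ideal and choose $0 \neq x = \sum_{\sigma \in S} a_\sigma u_\sigma \in I$ with support $S := \{\sigma : a_\sigma \neq 0\}$ of minimal size. If $|S| = 1$, then $x = a_\sigma u_\sigma$, and both $a_\sigma \in L^\times$ and $u_\sigma$ are units in $A$ (the latter because $u_\sigma u_{\sigma^{-1}} = \gamma_{\sigma, \sigma^{-1}} u_1 = \gamma_{\sigma, \sigma^{-1}} \in L^\times$), hence $I = A$. If $|S| \geq 2$, pick distinct $\sigma, \tau \in S$ and $b \in L$ with $\sigma(b) \neq \tau(b)$, which exists because $\sigma \tau^{-1} \neq 1$. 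Then the element $xb - \sigma(b) x = \sum_{\rho \in S} a_\rho(\rho(b) - \sigma(b)) u_\rho$ lies in $I$, has strictly smaller support (the $\sigma$-term vanishes), and is nonzero (the $\tau$-term is $a_\tau(\tau(b) - \sigma(b))u_\tau \neq 0$). This contradicts minimality.

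For centrality, observe that $Z(A) \subseteq C_A(L) = L$, so any $z \in Z(A)$ lies in $L$; but then $u_\sigma z = \sigma(z) u_\sigma$ must equal $z u_\sigma$, forcing $\sigma(z) = z$ for every $\sigma \in G$, hence $z \in L^G = k$ by the Galois correspondence. Finally, with $A$ now known to be a finite dimensional central simple algebra containing $L$ as a subfield with $C_A(L) = L$ and $[A:k] = [L:k]^2$, Lemma \ref{3.3} identifies $L$ as a strictly maximal subfield. Applying Theorem \ref{3.2} with $B = L$ gives $B' = C_A(L) = L \cong M_m(D)$; since $L$ is a field this collapses to $m = 1$ and $D \cong L$, and then $A \otimes_k L \cong M_n(L)$ with $n$ forced by the dimension count $n^2 = [A \otimes_k L : L] = [A:k] = [L:k]^2$, i.e.\ $n = [L:k]$. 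Thus $L$ splits $A$.

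The routine pieces are bookkeeping with the basis $\{u_\sigma\}$; the one real difficulty is the simplicity argument, where the support-shrinking trick relies on exploiting a separation of characters $\sigma, \tau \in G$ by a single $b \in L$, and on having both scalars $a_\sigma \in L^\times$ and basis symbols $u_\sigma$ invertible in $A$. Once that is in place, centrality and splitting follow cleanly from the earlier structural results.
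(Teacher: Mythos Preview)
Your proof is correct and follows essentially the same approach as the paper: the centralizer computation, the minimal-support commutator trick for simplicity, the centrality via $Z(A)\subseteq C_A(L)=L$ and $L^G=k$, and the appeal to Theorem~\ref{3.2} for the splitting are all the same ideas. The only cosmetic differences are that you read off $[A:k]=[L:k]^2$ directly from the basis at the outset (the paper derives it at the end from strict maximality), and your simplicity argument splits into the cases $|S|=1$ and $|S|\geq 2$ whereas the paper normalizes to $a_1\neq 0$ and concludes in one stroke that the minimal $x$ lies in $C_A(L)=L$.
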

    
    \begin{proof}
        If $\sum_\sigma a_\sigma u_\sigma\in Z(A)$, then for $a\in L$, $$\sum_\sigma (aa_\sigma) u_\sigma=a\sum_\sigma a_\sigma u_\sigma=(\sum_\sigma a_\sigma u_\sigma)a=\sum_\sigma a_\sigma (u_\sigma a)=\sum_\sigma a_\sigma \sigma(a)u_\sigma.$$ This means that whenever $a_\sigma\neq 0$, $\sigma(a)=a$. This forces $\sigma=1$. Thus, $C_A(L)=L$ and $Z(A)\subseteq L$. Moreover, if $a\in Z(A)$, $au_\sigma=u_\sigma a=\sigma(a)u$ for all $\sigma\in G$ Thus, $a\in INV(G)\cong k$, where $INV(G)=\{a\in L: \sigma(a)=a\text{ for all }\sigma\in G\}$(c.f. 5.4.4 \cite{drozd2012finite}). $Z(A)=k$. $A$ is central.
        
        Now let $I\ideal A$. Choose $x=\sum_{\sigma\in G} a_\sigma u_\sigma\in I$ with the least number of non-zero coefficient $a_\sigma$. By multiplying a suitable $\sigma\in G$, we can assume $a_1\neq 0$. Take any $a\in L$ then $ax-xa\in I$, but $$ax-xa=\sum_\sigma aa_\sigma u_\sigma-\sum_\sigma a_\sigma u_\sigma a=\sum_\sigma aa_\sigma u_\sigma-\sum_\sigma a_\sigma \sigma(a)u_\sigma=\sum_\sigma (aa_\sigma-a_\sigma\sigma(a)) u_\sigma.$$ so $aa_1-a_1\cdot 1(a)=0$. Then the number of non-zero coordinates in $ax-xa$ is smaller then $x$. Hence $ax=xa$, $x\in C_A(L)=L$ and $x$ is invertable. This forces $I=A$. $A$ is simple. $L$ is a splitting field by $(iv)$ of \ref{3.2} because $C_A(L)=L$. We also see $L$ is strictly maximal, so $[A:k]=[L:k]^2$.
    \end{proof}

    \begin{corollary}\label{4.1.8}
        Let $D$ be a finite dimensional central division algebra, and $L$ be its Galois splitting field. Then there exist $\gamma\in Z^2(G,L^\times)$ such that $(G,L^\times, \gamma)\sim D$.
    \end{corollary}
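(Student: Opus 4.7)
The plan is to realize $D$ up to similarity inside a central simple $k$-algebra $A$ that contains $L$ as a \emph{strictly maximal} subfield, and then apply Skolem--Noether to produce invertible elements $u_\sigma$ indexed by $G = Gal(L/k)$ whose multiplication table furnishes the desired cocycle $\gamma$.

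First, I would replace $D$ by a Brauer-equivalent algebra $A := M_r(D)$ chosen so that $L$ embeds as a strictly maximal subfield. Setting $d = \sqrt{[D:k]}$ (an integer by \ref{4.1.1} and \ref{3.1.5}(ii) applied to any separable maximal subfield of $D$), one wants $r = [L:k]/d$, so that $[A:k] = r^2 d^2 = [L:k]^2$ and $L$ is strictly maximal by Lemma~\ref{3.3}, giving $C_A(L) = L$. The divisibility $d \mid [L:k]$ comes from a module-theoretic count: a simple $D\otimes_k L$-module $V$ (unique because $D \otimes_k L \cong M_d(L)$ by a dimension count) is a $D$-module, hence $V \cong D^s$ for some $s$; comparing $k$-dimensions gives $[L:k]\, d = s\, d^2$, and the commuting $L$-action on $V$ embeds $L$ into $End_D(V) \cong M_s(D^{op})$. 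Applying the same argument to $D^{op}$ (which is also split by $L$) furnishes the desired embedding into $M_r(D)$.

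Next, for each $\sigma \in G$, I would apply the Skolem--Noether theorem (\ref{3.2.1}) to the two embeddings $L\hookrightarrow A$ given by inclusion and by $a \mapsto \sigma(a)$: there exists an invertible $u_\sigma \in A$ with $u_\sigma a u_\sigma^{-1} = \sigma(a)$ for every $a\in L$. Then $u_\sigma u_\tau u_{\sigma\tau}^{-1}$ centralizes $L$, so by strict maximality it lies in $C_A(L)^\times = L^\times$, yielding a unique $\gamma_{\sigma,\tau} \in L^\times$ with $u_\sigma u_\tau = \gamma_{\sigma,\tau} u_{\sigma\tau}$. Associativity $(u_\sigma u_\tau)u_\rho = u_\sigma(u_\tau u_\rho)$, combined with the rule $u_\sigma a = \sigma(a) u_\sigma$, translates directly into the cocycle identity $\gamma_{\sigma,\tau}\gamma_{\sigma\tau,\rho} = \sigma(\gamma_{\tau,\rho})\gamma_{\sigma,\tau\rho}$, so $\gamma \in Z^2(G, L^\times)$.

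Finally, a Dedekind-style argument shows the family $\{u_\sigma\}_{\sigma \in G}$ is $L$-linearly independent: a shortest non-trivial relation $\sum a_\sigma u_\sigma = 0$ could be shortened by bracketing with a $b\in L$ that separates two surviving characters (since distinct $\sigma,\tau \in G$ disagree somewhere on $L$). As $|G| = [L:k] = \dim_L A$, this family is an $L$-basis of $A$, and the multiplication rules on it coincide with those defining $(G,L,\gamma)$, giving $A \cong (G,L,\gamma)$. Since $A = M_r(D) \sim D$, this proves $(G,L,\gamma) \sim D$. The principal obstacle is the initial step of realizing $L$ as a strictly maximal subfield of a matrix algebra over $D$; once this setup and the divisibility $d\mid[L:k]$ are in place, the Skolem--Noether cocycle extraction is routine.
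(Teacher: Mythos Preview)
Your approach is essentially the same as the paper's: both embed $L$ as a strictly maximal subfield of $A = M_m(D)$ by acting on the simple $D\otimes_k L$-module, then extract the cocycle $\gamma$ via Skolem--Noether and a dimension count. The only cosmetic differences are that the paper glosses over the $D$ versus $D^{op}$ issue you handle explicitly, and for the final step the paper defines a homomorphism $(G,L,\gamma)\to A$ and invokes simplicity of the crossed product (Theorem~\ref{4.1.7}) to get injectivity, rather than proving $L$-linear independence of the $u_\sigma$ by a Dedekind argument.
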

    
    \begin{proof}
        Let $D\otimes L\cong M_d(L)$ for $d=[D:k]^{1/2}$. $D\otimes L$ is a semisimple algebra and consider a simple module $U$. $U$ is a left $D$ right $L$ bimodule. Regard $U$ as a vector space over $D$. Then the right multiplication by $L$ is a $D$-linear endomorphism of $D$. Denote this matrix $T(\alpha)$. $T:L\to A$ where $A\defeq M_m(D)$, $m=[U:D]$. Hence $L$ is a subalgebra of $A$.
        
        Now $[U:k]=[U:D][D:k]=md^2$, but $U=L^d$ so $[U:k]=d[L:k]$. This means $[L:k]=md$ and $[A:k]=m^2d^2=[L:k]^2$. $L$ is a strictly maximal subfield in $A$, so $C_A(L)=L$.
        
        Take $\sigma\in Gal(L/k)$. By Skolem-Noether \ref{3.2.1}, there is an invertable element $u_\sigma\in A$ such that $\sigma(a)=u_\sigma a u_\sigma^{-1}$, i.e., $u_\sigma a=\sigma(a)u_\sigma$ for all $a\in L$. $u_\sigma$ is determined up to $C_A(L)=L$.
        
        Now take $\tau\in Gal(L/k)$. Then $\sigma\tau(a)=u_{\sigma\tau}au_{\sigma\tau}^{-1}$,$$\ \sigma(\tau(a))=u_\sigma u_\tau a u_\tau^{-1}u_\sigma^{-1}=(u_\sigma u_\tau)a(u_\sigma u_\tau)^{-1}.$$ Hence $u_{\sigma\tau}=\gamma_{\sigma,\tau}u_\sigma u_\tau$ for $\gamma_{\sigma,\tau}\in C_A(L)=L$.
        
        The associativity of multiplication in $A$ produce the precise condition for $\gamma_{\sigma, \tau}$ to satisfy the definition of factor set (just calculate $u_\sigma u_\tau u_\rho$ in two ways). Thus we have a homomorphism $\phi:(G,L,\gamma)\to A=M_m(D)$. By \ref{4.1.7}, $\phi$ is a monomorphism, and $[(G,L,\gamma):k]=[L:k]^2=[A:k]$. Now $\phi$ is an isomorphism.
    \end{proof}
    
    \begin{remark}
        This corollary, together with corollary \ref{4.1.4}, gives that any finite dimensional central division algebra is similar to some crossed products.
    \end{remark}
    
    \begin{definition}[coboundary]\index{coboundary}
        $\delta\in Z^2(G,L^\times)$ is called \emph{coboundary} if there is a function $\mu:G\to L^\times$ such that $$\delta_{\sigma,\tau}=\mu_\sigma\sigma(\mu_\tau)\mu_{\sigma\tau}^{-1},$$ for any $\sigma$, $\tau\in G$.
        
        The coboundaries form a subgroup $B^2(G,L^\times)$ of $Z^2(G,L^\times)$. We define $$H^2(G,L^\times)\defeq \frac{Z^2(G,L^\times)}{B^2(G,L^\times)}$$.\index{$B^2(G,L^\times)$}\index{$H^2(G,L^\times)$}
    \end{definition}
    
    \begin{theorem}\label{4.1.10}
    Let $\gamma, \eta\in Z^2(G,L^\times)$. The algebras $(G,L,\gamma)\cong (G,L,\eta)$ if and only if $\gamma=\eta\delta$ for some $\delta\in B^2(G,L^\times)$.
    \end{theorem}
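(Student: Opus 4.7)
My plan is to prove the two directions separately, both via direct manipulation of the generators $u_\sigma$ of $(G,L,\gamma)$ and $v_\sigma$ of $(G,L,\eta)$, invoking $C_A(L)=L$ from Theorem \ref{4.1.7} together with the Skolem--Noether theorem \ref{3.2.1}.

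For the ``if'' direction, suppose $\gamma_{\sigma,\tau}=\eta_{\sigma,\tau}\delta_{\sigma,\tau}$ with $\delta_{\sigma,\tau}=\mu_\sigma\sigma(\mu_\tau)\mu_{\sigma\tau}^{-1}$ for some $\mu:G\to L^\times$. I would define an $L$-linear map $\phi:(G,L,\eta)\to(G,L,\gamma)$ by $\phi(a)=a$ for $a\in L$ and $\phi(v_\sigma)=\mu_\sigma^{-1}u_\sigma$, then verify that it is a ring homomorphism. Bijectivity is automatic since $\phi$ sends an $L$-basis to an $L$-basis. The compatibility $\phi(a v_\sigma)=\phi(a)\phi(v_\sigma)$ is immediate, and the equality $\phi(v_\sigma v_\tau)=\phi(v_\sigma)\phi(v_\tau)$, after expanding via $u_\sigma a=\sigma(a)u_\sigma$ and $u_\sigma u_\tau=\gamma_{\sigma,\tau}u_{\sigma\tau}$, reduces exactly to the given coboundary identity $\gamma_{\sigma,\tau}=\eta_{\sigma,\tau}\mu_\sigma\sigma(\mu_\tau)\mu_{\sigma\tau}^{-1}$.

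For the ``only if'' direction, suppose $\phi:(G,L,\eta)\xrightarrow{\sim}(G,L,\gamma)=:A$ is an algebra isomorphism. The restriction $\phi|_L$ and the natural inclusion $L\hookrightarrow A$ are two homomorphisms of the finite dimensional simple algebra $L$ into the finite dimensional central simple algebra $A$, so by Skolem--Noether \ref{3.2.1} there is an invertible $c\in A$ with $\phi(a)=cac^{-1}$ for all $a\in L$. Replacing $\phi$ by $x\mapsto c^{-1}\phi(x)c$, which is again an algebra isomorphism $(G,L,\eta)\to(G,L,\gamma)$, I may assume $\phi|_L=\operatorname{id}_L$. Then each $\phi(v_\sigma)\in A$ satisfies $\phi(v_\sigma)\,a=\sigma(a)\,\phi(v_\sigma)$, so $\phi(v_\sigma)\,u_\sigma^{-1}$ centralizes $L$ in $A$, and $C_A(L)=L$ by Theorem \ref{4.1.7}. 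Hence $\phi(v_\sigma)=\nu_\sigma u_\sigma$ for a unique $\nu_\sigma\in L^\times$ (nonzero because $v_\sigma$ is a unit). Applying $\phi$ to $v_\sigma v_\tau=\eta_{\sigma,\tau}v_{\sigma\tau}$ and expanding both sides yields $\nu_\sigma\sigma(\nu_\tau)\gamma_{\sigma,\tau}=\eta_{\sigma,\tau}\nu_{\sigma\tau}$, from which $\gamma_{\sigma,\tau}\eta_{\sigma,\tau}^{-1}$ is manifestly the coboundary associated to the function $\mu:=\nu^{-1}:G\to L^\times$.

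The main technical step is the reduction to $\phi|_L=\operatorname{id}_L$ via Skolem--Noether; once that is in place, everything else is bookkeeping dictated by $C_A(L)=L$, which pins down $\phi(v_\sigma)$ up to a factor in $L^\times$. The only point I expect to have to handle carefully is keeping the $\mu$ versus $\mu^{-1}$ convention consistent, so that the extracted coboundary matches the exact form $\mu_\sigma\sigma(\mu_\tau)\mu_{\sigma\tau}^{-1}$ in the definition of $B^2(G,L^\times)$. Since $L^\times$ is abelian, $B^2(G,L^\times)$ is closed under inversion, so this is cosmetic rather than substantive.
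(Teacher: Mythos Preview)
Your proposal is correct and follows essentially the same approach as the paper: both directions hinge on Skolem--Noether to normalize $\phi|_L=\mathrm{id}_L$ and on $C_A(L)=L$ from Theorem~\ref{4.1.7} to force $\phi(v_\sigma)=\nu_\sigma u_\sigma$, after which the cocycle computation is identical. The only cosmetic differences are that the paper runs the explicit map in the opposite direction for the ``if'' part and argues bijectivity via simplicity plus a dimension count rather than your basis-to-basis observation.
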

    
    \begin{proof}
        Let $A=(G,L,\gamma)$, $B=(G,L,\eta)$. 
        
        \textit{(only if)}. Suppose $\phi:A\xrightarrow{\sim} B$ is the isomorphism. $\phi(L)$ is a subfield of $B$ isomorphic to $L$. By Skolem-Noether \ref{3.2.1}, we can without loss of generality assume $\phi(a)=a$, for all $a\in L$, by composing isomorphisms. Write $v_\sigma=\phi(u_\sigma)\in B$, where $u_\sigma$ is the element in $A$ indexed by $G$ as before. Let $u'_\sigma$ be the element in $B$ indexed by $G$ as before. $$v_\sigma a=\phi(u_\sigma a)=\phi(\sigma(a)u_\sigma)=\sigma(a)v_\sigma.$$ It follows that $v_\sigma=l_\sigma u'_\sigma$ for $l_\sigma\in C_B(L)=L$.
        
        Now $$\phi(u_\sigma u_\tau)=\phi(\gamma_{\sigma,\tau}u_{\sigma\tau})=\gamma_{\sigma,\tau}v_{\sigma\tau},$$ $$\phi(u_\sigma u_\tau)=\phi(u_\sigma)\phi(u_\tau)=v_\sigma v_\tau=l_\sigma u'_\sigma l_\tau u'_\tau=l_\sigma\sigma(l_\tau)l_{\sigma\tau}^{-1}\eta_{\sigma,\tau}v_{\sigma\tau}.$$
        Therefore $\gamma_{\sigma,\tau}=l_\sigma\sigma(l_\tau)l_{\sigma\tau}^{-1}\eta_{\sigma,\tau}\in \eta B^2(G,L^\times).$
        
        \textit{(if)} Let $\gamma=\eta\delta$ where $\delta_{\sigma,\tau}=l_\sigma\sigma(l_\tau)l_{\sigma\tau}^{-1}.$ Define $\phi':A\to B$, $\sum a_\sigma u_\sigma\mapsto \sum a_\sigma l_\sigma u'_\sigma$. This is an algebra homomorphism by direct verification. $A$ is simple and $[A:k]=[B:k]$. This shows that $\phi$ is an isomorphism.
        
    \end{proof}
    
    The following proof requires a version of the \emph{normal basis theorem} (c.f. 5.5.2 \cite{drozd2012finite}) in Galois theory and \emph{Pierce decomposition} (c.f. 1.7 \cite{drozd2012finite}).

    \begin{lemma}\label{4.1.11}
    $(G,L,\gamma)\otimes (G,L,\eta)\cong M_n((G,L,\gamma\eta))$, $n=[L:k]$.
    \end{lemma}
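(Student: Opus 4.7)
The plan is to exhibit $M_n((G,L,\gamma\eta))$ inside $C\defeq A\otimes_k B$ by a corner-algebra construction, where I write $A=(G,L,\gamma)$ with standard generators $u_\sigma$ and $B=(G,L,\eta)$ with standard generators $u'_\sigma$. A dimension count ($[C:k]=n^4=n^2\cdot[(G,L,\gamma\eta):k]$) suggests the following target: produce $n$ pairwise orthogonal, mutually conjugate idempotents in $C$ summing to $1$, each with corner algebra isomorphic to $(G,L,\gamma\eta)$; then the standard matrix-units argument gives $C\cong M_n(e_1Ce_1)\cong M_n((G,L,\gamma\eta))$.

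The idempotents will come from the Pierce decomposition of the commutative subalgebra $L\otimes_k L\subset C$. The $k$-algebra map $L\otimes_k L\to\prod_{\sigma\in G}L$, $a\otimes b\mapsto(a\sigma(b))_\sigma$, is surjective (this is where the normal basis theorem / Dedekind independence of characters enters) between $k$-spaces of dimension $n^2$, hence an isomorphism; let $\{e_\sigma\}_{\sigma\in G}$ be the resulting complete orthogonal system of primitive idempotents. They are characterized by $(1\otimes b)e_\sigma=\sigma(b)e_\sigma$ and in particular satisfy $(a\otimes b)e_1=ab\cdot e_1$. Using the commutation law $(u_\sigma\otimes u'_\tau)(a\otimes b)=(\sigma(a)\otimes\tau(b))(u_\sigma\otimes u'_\tau)$, a short calculation yields the conjugation rule $(u_\sigma\otimes u'_\tau)\,e_\rho\,(u_\sigma\otimes u'_\tau)^{-1}=e_{\sigma\rho\tau^{-1}}$. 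Two consequences: $u_\sigma\otimes 1$ conjugates $e_1$ to $e_\sigma$ (so the $e_\sigma$ are all conjugate in $C$, and $x_{\sigma,\tau}\defeq(u_\sigma\otimes 1)e_1(u_\tau\otimes 1)^{-1}$ form the needed matrix units), while $w_\sigma\defeq u_\sigma\otimes u'_\sigma$ commutes with $e_1$.

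The heart of the proof is the identification $e_1Ce_1\cong(G,L,\gamma\eta)$. Starting from $C=\bigoplus_{\sigma,\tau}(L\otimes L)(u_\sigma\otimes u'_\tau)$, the conjugation rule collapses $e_1Ce_1$ to the diagonal $\sigma=\tau$ summand, leaving $e_1Ce_1=\bigoplus_{\sigma\in G}Lv_\sigma$ with $v_\sigma\defeq e_1w_\sigma$ and $L$ acting through $L\otimes 1$ (the identity $(a\otimes b)e_1=ab\cdot e_1$ reduces $e_1(L\otimes L)e_1$ to $Le_1$). The defining relations of the crossed product then drop out: $v_\sigma a=\sigma(a)v_\sigma$ comes from $u_\sigma a=\sigma(a)u_\sigma$, while $v_\sigma v_\tau=\gamma_{\sigma,\tau}\eta_{\sigma,\tau}v_{\sigma\tau}$ comes from $w_\sigma w_\tau=(\gamma_{\sigma,\tau}\otimes\eta_{\sigma,\tau})w_{\sigma\tau}$ combined with $(\gamma_{\sigma,\tau}\otimes\eta_{\sigma,\tau})e_1=\gamma_{\sigma,\tau}\eta_{\sigma,\tau}\,e_1$. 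These are precisely the relations defining $(G,L,\gamma\eta)$.

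The main obstacle is the bookkeeping around the Pierce idempotents — in particular verifying the conjugation rule $e_\rho\mapsto e_{\sigma\rho\tau^{-1}}$ and then using it consistently in the corner computation. The reason the factor set $\gamma\eta$ emerges cleanly is essentially the collapse identity $(a\otimes b)e_1=ab\cdot e_1$, which multiplies the two $L$-factors and so multiplies the two cocycles pointwise. Once $e_1Ce_1\cong(G,L,\gamma\eta)$ is established, the matrix units $x_{\sigma,\tau}$ together with $\sum_\sigma e_\sigma=1$ assemble $C$ into $M_n(e_1Ce_1)$, completing the proof.
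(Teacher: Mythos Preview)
Your proof is correct and follows essentially the same route as the paper's: both decompose $L\otimes_k L$ into its Pierce idempotents, identify the same distinguished idempotent (your $e_1$ is the paper's $f$, characterized by $(a\otimes 1)f=f(1\otimes a)$), verify that $w_\sigma=u_\sigma\otimes u'_\sigma$ commutes with it, and compute the corner $e_1Ce_1\cong(G,L,\gamma\eta)$ via the same relations. The only cosmetic difference is the final assembly: the paper argues module-theoretically that $(1\otimes u'_\tau)$ conjugates the $e_\sigma$ amongst themselves so that $A\otimes B\cong\bigl((A\otimes B)e_1\bigr)^n$ and hence $A\otimes B\cong M_n\bigl(\mathrm{End}_{A\otimes B}((A\otimes B)e_1)\bigr)=M_n(T)$, whereas you build explicit matrix units $x_{\sigma,\tau}=(u_\sigma\otimes 1)e_1(u_\tau\otimes 1)^{-1}$; these are two standard phrasings of the same Morita/Pierce mechanism.
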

    
    \begin{proof}
        Let $A=(G,L,\gamma)$, $B=(G,L,\eta)$. Since $L$ is a subfield of $A$ and $B$, $L\otimes L$ is a subalgebra of $A\otimes B$. Let $_\sigma L$ be the $L\otimes L$-module with the action $(a\otimes b)\cdot x=\sigma(a)\cdot x \cdot b$. Then $L\otimes L$ decomposes as $n=[L:k]=|G|$ one-dimensional $L\otimes L$-modules, i.e., $L\otimes L\cong \oplus_{\sigma\in G}\ _\sigma L$ as $L\otimes L$-module (c.f. 5.4.1 \cite{drozd2012finite}). By Pierce decomposition (c.f. 1.7.2 \cite{drozd2012finite}), we know that there exist a unique non-zero idempotent $f\in L\otimes L$ such that $(a\otimes 1)f=f(1\otimes a)$, for all $a\in L$. Now we claim $(u_\sigma\otimes u_\sigma)f=f(u_\sigma\otimes u_\sigma)$ for any $\sigma\in G$. To see that, consider the idempotent $(u_\sigma\otimes u_\sigma)^{-1}f(u_\sigma\otimes u_\sigma)$ and check that $$(a\otimes 1)(u_\sigma\otimes u_\sigma)^{-1}f(u_\sigma\otimes u_\sigma)=(u_\sigma\otimes u_\sigma)^{-1} (\sigma(a)\otimes 1)f(u_\sigma\otimes u_\sigma)$$ $$=(u_\sigma\otimes u_\sigma)^{-1}f(1\otimes\sigma(a))(u_\sigma\otimes u_\sigma)=(u_\sigma\otimes u_\sigma)^{-1}f(u_\sigma\otimes u_\sigma)(1\otimes a).$$ Therefore by uniqueness, $(u_\sigma\otimes u_\sigma)^{-1} f(u_\sigma\otimes u_\sigma)=f.$
        
        Now let $T=f(A\otimes B)f$. We claim that $T\cong (G,L,\gamma\eta)$.
        
        Define $\phi:(G,L,\gamma\eta)\to T$, $$a\mapsto \bar{a}\defeq f(1\otimes a)=(a\otimes 1)f,$$ $$u_\sigma\mapsto \overline{u_\sigma}=f(u_\sigma\otimes u_\sigma)=(u_\sigma\otimes u_\sigma)f.$$ Then
        $$\bar{a}\bar{b}=\overline{ab},$$
        $$\overline{u_\sigma}\bar{a}=(u_\sigma\otimes u_\sigma)f\cdot f(1\otimes a)=(u_\sigma\otimes u_\sigma)(a\otimes 1)f=(\sigma(a)\otimes 1)(u_\sigma\otimes u_\sigma)f=\overline{\sigma(a)}\overline{u_\sigma}$$
        \begin{align*}
            \overline{u_\sigma}\overline{u_\tau}&=(u_\sigma\otimes u_\sigma)(u_\tau\otimes u_\tau)f=((u_\sigma u_\tau)\otimes (u_\sigma u_\tau))f=(\gamma_{\sigma,\tau}u_{\sigma\tau}\otimes \eta_{\sigma,\tau}u_{\sigma\tau})f\\
            &=(\gamma_{\sigma,\tau}\otimes 1)(1\otimes \eta_{\sigma,\tau})f(u_{\sigma\tau}\otimes u_{\sigma\tau})=(\gamma_{\sigma,\tau}\eta_{\sigma,\tau}\otimes 1)f(u_{\sigma\tau}\otimes u_{\sigma\tau})\\
            &=\overline{\gamma_{\sigma,\tau}}\overline{\eta_{\sigma,\tau}}\overline{u_{\sigma\tau}}
        \end{align*}
        
        Therefore, $\phi$ is a homomorphism, and hence a monomorphism. Now by Pierce decomposition, considering $(A\otimes B)f$ as a left $A\otimes B$-module, $f(A\otimes B)f$ is along the diagonal of the matrix of Pierce component (c.f. p.27 \cite{drozd2012finite}), so $T\cong End_{A\otimes B}((A\otimes B)f)$. Consider again the Pierce decomposition of $L\otimes L$. Its corresponding decomposition of identity in $L\otimes L$ is $1=\sum_{\sigma\in G} e_\sigma$ where $e_\sigma$ is the unique idempotent such that $ae_\sigma=\sigma(a)e_\sigma$. Moreover, for any $x=\sum a_i\otimes b_i \in L\otimes L$, $\tau x=\sum a_i \otimes \tau b_i$ but $\tau e_\sigma=e_{\tau \sigma}$ for $\tau \in G \subseteq LG$ (c.f. 5.5.2 \cite{drozd2012finite}), where $\tau$, considered as element of $LG$, acts on the second elements of $x$ via the action of $G$ on $L$. Write $e_\sigma=\sum a_i\otimes b_i\in L\otimes L$. We have $$(1\otimes u_\tau)e_\sigma = \sum a_i\otimes u_\tau b_i = \sum a_i\otimes \tau(b_i)u_\tau=\tau e_\sigma(1\otimes u_\tau)=e_{\tau\sigma}(1\otimes u_\tau).$$ Therefore $(1\otimes u_\tau)e_\sigma(1\otimes u_\tau)^{-1}=e_{\tau\sigma}$. This means all modules of $(A\otimes B)e_\sigma$ are isomorphic and in particular, isomorphic to $(A\otimes B)f$, $f=e_1$. Therefore, $$A\otimes B\cong \oplus_{\sigma\in G} (A\otimes B)e_\sigma\cong [(A\otimes B)f]^n,$$ $$A\otimes B\cong End_{A\otimes B}(A\otimes B) \cong M_n(T).$$ Hence, $$[T:k]=n^2=[(G,L, \gamma\eta):k],$$ so $\phi$ is an isomorphism.
    \end{proof}

    \begin{corollary}\label{4.1.12}
    
    $Br(L/k)\cong H^2(G,L^\times).$
    
    \end{corollary}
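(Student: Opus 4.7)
My plan is to define the map
$$\Psi: Z^2(G,L^\times) \longrightarrow Br(L/k), \qquad \Psi(\gamma) \defeq [(G,L,\gamma)],$$
verify that it is a group homomorphism which factors through $B^2(G,L^\times)$, and then check that the induced map $\bar\Psi: H^2(G,L^\times)\to Br(L/k)$ is both surjective and injective. All four pieces will come essentially for free from the theorems already proved in this section, except for one identification which will require a direct computation.

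First, $\Psi$ is well-defined: Theorem \ref{4.1.7} guarantees that $(G,L,\gamma)$ is a finite dimensional central simple $k$-algebra with $L$ as a splitting field, so its Brauer class lies in $Br(L/k)$. By Lemma \ref{4.1.11} we have $(G,L,\gamma)\otimes (G,L,\eta)\cong M_n((G,L,\gamma\eta))$, so $\Psi(\gamma)\Psi(\eta)=\Psi(\gamma\eta)$, making $\Psi$ a homomorphism. Theorem \ref{4.1.10} says that $(G,L,\gamma)\cong (G,L,\eta)$ exactly when $\gamma\eta^{-1}\in B^2(G,L^\times)$; hence $\Psi$ is constant on cosets of $B^2(G,L^\times)$ and descends to $\bar\Psi$. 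For surjectivity, any class in $Br(L/k)$ has a central division algebra representative $D$ with $L$ as a splitting field (Artin-Wedderburn), and Corollary \ref{4.1.8} produces $\gamma\in Z^2(G,L^\times)$ with $(G,L,\gamma)\sim D$, giving $\bar\Psi([\gamma])=[D]$.

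The one real obstacle is injectivity, which hinges on identifying the trivial class. I will show that $(G,L,\mathbf{1})\cong M_n(k)$, where $\mathbf{1}$ denotes the constant cocycle $\mathbf{1}_{\sigma,\tau}=1$ and $n=[L:k]$, by constructing the explicit homomorphism
$$\phi:(G,L,\mathbf{1})\longrightarrow End_k(L),\qquad a\mapsto L_a,\quad u_\sigma\mapsto \sigma,$$
where $L_a$ is left multiplication by $a\in L$ and $\sigma\in G$ acts on $L$ as a Galois automorphism. The relations $u_\sigma a=\sigma(a)u_\sigma$ and $u_\sigma u_\tau=u_{\sigma\tau}$ are preserved by a direct check ($\sigma\circ L_a=L_{\sigma(a)}\circ \sigma$ on $L$); $\phi$ is then injective because $(G,L,\mathbf{1})$ is simple (Theorem \ref{4.1.7}), and an isomorphism because both sides have $k$-dimension $n^2$.

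Given this identification, suppose $\bar\Psi([\gamma])=[k]$. By Lemma \ref{2.4}, $(G,L,\gamma)\cong M_m(k)$ for some $m$, and matching dimensions $[(G,L,\gamma):k]=n^2$ from Theorem \ref{4.1.7} forces $m=n$; therefore $(G,L,\gamma)\cong M_n(k)\cong (G,L,\mathbf{1})$, and a final application of Theorem \ref{4.1.10} with $\eta=\mathbf{1}$ yields $\gamma=\mathbf{1}\cdot\delta=\delta\in B^2(G,L^\times)$, i.e., $[\gamma]=1$ in $H^2(G,L^\times)$. This establishes that $\bar\Psi$ is an isomorphism.
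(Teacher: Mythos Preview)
Your proof is correct and follows the same strategy as the paper: define $\Psi$, use Lemma~\ref{4.1.11} for the homomorphism property, Corollary~\ref{4.1.8} for surjectivity, and Theorem~\ref{4.1.10} to identify the kernel. Your explicit construction $(G,L,\mathbf{1})\cong End_k(L)$ is valid but not actually needed: once $\Psi$ is a group homomorphism it automatically sends the identity cocycle $\mathbf{1}$ to the identity $[k]$ of $Br(L/k)$, after which your dimension-plus-\ref{4.1.10} argument for injectivity goes through without ever writing down that isomorphism.
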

    
    \begin{proof}
        It follows from \ref{4.1.11} that the map $Z^2(G,L^\times)\to Br(L/k)$, $\gamma\mapsto [(G,L,\gamma)]$ is a well-defined homomorphism. It is kernel is $B^2(G,L^\times)$ by \ref{4.1.10} and its surjectivity dues to \ref{4.1.8}. It then implies $H^2(G,L^\times)\xrightarrow{\sim} Br(L/k)$, $\gamma B^2(G,L^\times)\mapsto [(G,L,\gamma)]$ is an isomorphism.
    \end{proof}
    
    \begin{theorem}\cite{artin1944rings}\label{4.1.13}
    Let $k\subseteq L\subseteq E$ be three fields with $E$, $L$ Galois over $k$. Let $G=Gal(E/k)$. $H=Gal(E/L)$. Then $(G,E,\gamma)\sim (G/H,L,\gamma')$, where $\gamma_{\sigma,\tau}=\gamma'_{\bar{\sigma},\bar{\tau}}$, where $\bar{\sigma}$, $\bar{\tau}$ are the images under the canonical quotient map $G\to G/H$.
    \end{theorem}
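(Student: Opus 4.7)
My plan is to show $A \sim A'$ by constructing a non-zero idempotent $e$ in $A := (G, E, \gamma)$ satisfying $eAe \cong A' := (G/H, L, \gamma')$. Since $A$ is simple, Artin-Wedderburn (\ref{1.4}) gives $A \cong M_m(D)$, and any non-zero idempotent corresponds to a rank-$r$ projection in $M_m(D)$, so $eAe \cong M_r(D)$ and $[eAe] = [D] = [A]$ in the Brauer group. Exhibiting an idempotent $e$ with $eAe \cong A'$ therefore yields $A \sim A'$ immediately.

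First I would normalize $\gamma$ so that $\gamma_{1,\sigma} = \gamma_{\sigma,1} = 1$, which can be done up to coboundary without changing the similarity class by \ref{4.1.10}. Because $\gamma_{\sigma,\tau} = \gamma'_{\bar\sigma,\bar\tau}$ depends only on cosets, the restriction $\gamma|_{H \times H}$ is then identically $1$. Consequently the $k$-subspace $B := \mathrm{span}_k\{a u_h : a \in E,\, h \in H\}$ is closed under multiplication and is the skew group algebra of $H$ acting on $E$; by the classical normal basis theorem it is isomorphic to $\mathrm{End}_L(E) \cong M_{|H|}(L)$.

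Assuming $\mathrm{char}(k) \nmid |H|$, I set $e := \frac{1}{|H|} \sum_{h \in H} u_h \in B$. Using $u_{h_1} u_{h_2} = u_{h_1 h_2}$ one checks $e^2 = e$ and $u_h e = e u_h = e$ for all $h \in H$. The crucial input is that $H$ is normal in $G$ (since $L/k$ is Galois), which together with $\gamma_{\sigma,h} = \gamma_{h,\sigma} = 1$ for $h \in H$ gives $u_\sigma e = e u_\sigma$ for every $\sigma \in G$: both equal $\frac{1}{|H|} \sum_{h \in H} u_{\sigma h}$, invoking $H\sigma = \sigma H$. I then define $v_{\bar\sigma} := e u_\sigma$, well defined on the coset since $u_{\sigma h} = u_\sigma u_h$ and $e u_h = e$. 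Direct computation confirms the crossed-product relations $v_{\bar\sigma} \cdot a = \bar\sigma(a) v_{\bar\sigma}$ for $a \in L$ and $v_{\bar\sigma} v_{\bar\tau} = \gamma'_{\bar\sigma,\bar\tau} v_{\overline{\sigma\tau}}$, yielding a homomorphism $A' \to eAe$ which is injective because $A'$ is simple.

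To promote this embedding to an isomorphism, I will compute $eae$ for a general $a \in E$: using $a u_h = u_h h^{-1}(a)$, the expression telescopes to $eae = \frac{1}{|H|} \mathrm{Tr}_{E/L}(a) \cdot e \in Le$. Thus every element of $eAe$ lies in the $L$-span of $\{v_{\bar\sigma}\}_{\bar\sigma \in G/H}$, giving $\dim_k eAe \le [L:k] \cdot [G:H] = [L:k]^2 = \dim_k A'$, so $A' \cong eAe$ and $A \sim A'$. The main obstacle is the case $\mathrm{char}(k) \mid |H|$, where the averaging idempotent does not exist; there I would replace $e$ by a primitive idempotent $f \in B \cong M_{|H|}(L)$ (which exists by matrix structure) and argue via the normality of $H$ and the $G$-action on $Bf$ that $fAf$ remains similar to $A'$ — this degenerate case is the most technically delicate piece of the argument.
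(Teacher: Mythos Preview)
Your approach is genuinely different from the paper's and, in the coprime-characteristic case, it is correct and rather elegant. The paper goes in the opposite direction: it fixes an $L$-basis of $E$, realises $E$ inside $M_m(L)\subseteq A'\otimes M_m(k)$ via the regular representation $\alpha\mapsto M(\alpha)$, introduces change-of-basis matrices $P_\sigma\in M_m(L)$ recording the Galois action on that basis, and sets $u'_\sigma:=P_\sigma u_{\bar\sigma}$. Verifying $u'_\sigma M(\alpha)=M(\sigma(\alpha))u'_\sigma$ and $u'_\sigma u'_\tau=\gamma'_{\bar\sigma,\bar\tau}u'_{\sigma\tau}$ yields a homomorphism $(G,E,\gamma)\to A'\otimes M_m(k)$, which is an isomorphism by simplicity and a dimension count. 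The virtue of that argument is that it is entirely characteristic-free: no averaging, no idempotent, no appeal to Maschke-type constructions.

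Your argument, by contrast, cuts $A$ down rather than building $A'$ up, and the key computations ($u_\sigma e=eu_\sigma$ via normality of $H$ and $\gamma_{\sigma,h}=\gamma_{h,\sigma}=1$, and $eae=\frac{1}{|H|}\mathrm{Tr}_{E/L}(a)\,e$) are clean. When $\mathrm{char}(k)\nmid|H|$ this is a complete proof.

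The gap is exactly where you flag it. When $\mathrm{char}(k)\mid |H|$ the averaging idempotent does not exist, and your proposed workaround is not yet an argument. A primitive idempotent $f\in B\cong M_{|H|}(L)$ certainly exists, but the whole engine of your proof was the identity $u_\sigma e=eu_\sigma$, which made $v_{\bar\sigma}:=eu_\sigma$ well defined on cosets and forced the cocycle of $eAe$ to be literally $\gamma'$. For an arbitrary primitive $f$ you only get $u_\sigma f u_\sigma^{-1}=f_\sigma$, another primitive idempotent conjugate to $f$ inside $B$; choosing $v_\sigma\in B^\times$ with $f_\sigma=v_\sigma f v_\sigma^{-1}$ and setting $w_\sigma:=f\,v_\sigma^{-1}u_\sigma$ does give elements of $fAf$ satisfying $w_\sigma\ell=\bar\sigma(\ell)w_\sigma$ for $\ell\in L$, but the product $w_\sigma w_\tau$ now involves the $B^\times$-valued ``cochain'' $v_\sigma$ and one must show the resulting $L^\times$-valued cocycle is cohomologous to $\gamma'$. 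That is doable, but it is real work you have not done, and it is precisely the content the paper's matrix construction handles uniformly. If you want a single proof valid in all characteristics, the paper's embedding $(G,E,\gamma)\hookrightarrow A'\otimes M_m(k)$ is the cleaner route.
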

    
    \begin{proof}\cite{artin1944rings}
        Let $m=[E:L]$, $A'=(G/H,L,\gamma')$. Consider $A\defeq A'\otimes M_m(k)$. We want to show $A\cong (G,E,\gamma)$. 
        
        First we need to find a copy of $E$ in $A$. Consider $E$ as an $L$-vector space with basis $v=\{v_1,\cdots,v_m\}$. Consider $E$ to be a left regular $E$-module and right $L$-vector space. Then every $\alpha \in E$ induces a right $L$-linear endomorphism of $E$ and with respect to the basis $v$, has a unique matrix representation $M(\alpha)$, where $M:E\to M_m(L)\subseteq A$, \begin{equation}
            \alpha\cdot v=v\cdot M(\alpha)  \label{4.1.13.1}
        \end{equation}  
        $\alpha\in E$. $M$ is injective so $M(E)$ form a subring $E'$ of $A$ which is isomorphic to $E$.
        
        Similarly, for $\tau\in G$, $\tau(v)=\{\tau(v_1),\cdots, \tau(v_m)\}$, denote the corresponding matrix $P_\tau\in M_m(L)$ satisfying 
        \begin{equation}\label{4.1.13.2}
            \tau(v)=v\cdot P_\tau.  
        \end{equation}
        For any $\sigma\in G$, applying to equation (\ref{4.1.13.2}), we get $$vP_{\sigma\tau}=\sigma(\tau(v))=\sigma(vP_\tau)=\sigma(v)\sigma(P_\tau)=vP_\sigma\sigma(P_\tau),$$ hence,
        \begin{equation}\label{4.1.13.3}
           P_{\sigma\tau}=P_\sigma\sigma(P_\tau).
        \end{equation}
        
        Applying $\sigma\in G$ to equation (\ref{4.1.13.1}) we get
        $$\sigma(\alpha v)=\sigma(\alpha)\sigma(v)=\sigma(\alpha)\cdot v P_\sigma=vM(\sigma(\alpha))P_\sigma$$
        $$\sigma(vM(\alpha))=\sigma(v)\sigma(M(\alpha))=vP_\sigma\sigma(M(\alpha))$$
        hence,
        \begin{equation}\label{4.1.13.4}
            P_\sigma\sigma(M(\alpha))=M(\sigma(\alpha))P_\sigma,
        \end{equation}
        where $\sigma(M(\alpha))$ is mapped entrywisely. 
        
        Since $M(\alpha)$, $P_\sigma\in M_m(L)$, $\sigma(M(\alpha))=\bar{\sigma}(M(\alpha))$, $\sigma(P_\sigma)=\bar{\sigma}(P_\sigma)$, where $\bar{\sigma}$ is the image of $\sigma\in G$ in $G/H$.
        
        Now we want to construct a homomorphism from $(G,E,\gamma)$ to $A$. Let $u_{\bar{\sigma}}$ be the element in $A'$ satisfying $$ u_{\bar{\sigma}} a = \bar{\sigma}(a) u_{\bar{\sigma}},$$
        $$u_{\bar{\sigma}}u_{\bar{\tau}}=\gamma'_{\bar{\sigma},\bar{\tau}}u_{\bar{\sigma}\bar{\tau}}$$ for $a\in L$. Set $u'_\sigma=P_\sigma u_{\bar{\sigma}}$ in $A$. Then
        $$ u'_\sigma M(\alpha)=P_\sigma u_{\bar{\sigma}} M(\alpha)=P_\sigma \bar{\sigma}(M(\alpha))u_{\bar{\sigma}}\stackrel{(\ref{4.1.13.4})}{=}M(\sigma(\alpha))P_\sigma u_{\bar{\sigma}}=M(\sigma(\alpha)) u'_{\sigma}.$$
        $$ u'_\sigma u'_\tau =P_\sigma u_{\bar{\sigma}} P_\tau u_{\bar{\tau}}=P_\sigma \bar{\sigma}(P_\tau) u_{\bar{\sigma}} u_{\bar{\tau}} = P_\sigma\bar{\sigma}(P_\tau) \gamma'_{\bar{\sigma},\bar{\tau}} u_{\bar{\sigma}\bar{\tau}} \stackrel{(\ref{4.1.13.3})}{=} \gamma'_{\bar{\sigma},\bar{\tau}} P_{\sigma\tau} u_{\overline{\sigma\tau}}=\gamma'_{\bar{\sigma},\bar{\tau}} u'_{\sigma\tau},$$ 
        where $\bar{\sigma}$, $\bar{\tau}$ is the image of $\sigma$, $\tau\in G$ in $G/H$.
        Therefore there exist a homomorphism from $(G,E,\gamma)$ to $A$.
        
        $(G,E,\gamma)$ is simple, and $$[(G,E,\gamma):k]=[E:k]^2=m^2[L:k]^2=[M_m(k):k][A':k]=[A:k]$$ gives bijectivity. Hence $(G,E,\gamma)\cong A\sim A'=(G/H, L, \gamma')$.
        \end{proof}
        
    \section{The Cyclic Algebra}
            
            The simplest type of crossed product is when the Galois group $G$ is cyclic. This section is mainly based on \cite{jacobson2009}, unless explicitly cited.
            
        \begin{definition}[cyclic extension]\index{cyclic extension}
            A field extension $L/k$ is \emph{cyclic} if $L/k$ is Galois and $Gal(L/k)$ is a cyclic group.
        \end{definition}
            
        \begin{definition}[cyclic algebra]\label{4.2.2}\index{cyclic algebra}
            Let $L/k$ be a cyclic extension with $Gal(L/k)=\left<\sigma\right>$. Then the crossed product $A=(\left<\sigma\right>,L,\gamma)$ is called a \emph{cyclic algebra} where 
            \[ \gamma_{\sigma^i,\sigma^j}=
               \left\{
                    \begin{array}{lll}
                          1& if\ 0\leq i+j<n &\\
                        r& if\ i+j\geq n & for \ 0\leq i,j<n
                    \end{array} 
                \right. 
            \]
            For notational convenience, we write $A=(\sigma,L,r)$ in case of cyclic algebra.\index{$(\sigma,L,r)$}
        \end{definition}
        
        Now we want to construct a cyclic algebra from a crossed product $A=(G,L,\gamma)$ where $G=\left<\sigma\right>$ is cyclic \cite{jacobson2009}. Let $n=|G|$. By the multiplication law, $u_\sigma a=\sigma(a)u_\sigma$ so $\sigma(a)=u_\sigma a u_\sigma^{-1}$. Write $u=u_\sigma$. Inductively for $0\leq i<n$, $\sigma^i(a)=u^i a u^{-i}$ so $u^i a=\sigma^i(a)u^i$. But $u_{\sigma^i}a=\sigma^i(a)u_{\sigma^i}$. It follows that $(u_{\sigma^i}^{-1}u^i)a=a(u_{\sigma^i}^{-1}u^i)$. $(u_{\sigma^i}^{-1}u^i)\in C_A(L)=L$ so $u^i=l_i u_{\sigma^i}$ for $l_i\in L^\times$. 
        
        Now $\{u^i\}_{0\leq i<n}$ is also an $L$-basis of $A$. We can replace $u_{\sigma^i}$ by $u^i$ for $0\leq i<n$. Denote the corresponding factor set $\gamma'$. For $i+j<n$, $u^{i+j}$ is the basis element, so $u^i u^j=u^{i+j}$, and $\gamma'_{i,j}=1$. For $i+j>n$, we want to find out $\gamma_{i,j}$, but $u^{i+j}$ is not necessarily a basis element so we need some more calculation. Note that $u^i u^j=u^{i+j}=u^n u^{i+j-n}$. We claim $u^n\in L^\times$. $$u^n a=u_\sigma\cdots u_\sigma a=\sigma(a)^n u_\sigma\cdots u_\sigma =\sigma^n(a)u^n.$$ Since $\sigma^n=1$, $u^n a=a u^n$ for all $a\in L$ so $u^n\in C_A(L)=L$. Denoting $r\defeq u^n$, we have $\gamma'_{i,j}=r$ for $i+j\geq n$. Now we have
        \[ \gamma'_{i,j}=
               \left\{
                    \begin{array}{lll}
                          1& if\ 0\leq i+j<n &\\
                        r& if\ i+j\geq n & for \ 0\leq i,j<n
                    \end{array} 
                \right. 
            \]
        and the multiplication in $A=\oplus_{i=0}^{n-1} Lu^i$ with respect to this new basis is
        $$ua=\sigma(a)u,\ \ u^n=r.$$
        Since $u^n$ commutes with $u$ and every element of $L$, $u^n\in Z(A)=k$, $r\in k$.
        
        The cyclic algebra inherits properties from the crossed product. First we need a definition.
        
        \begin{definition}[field norm]\index{field norm}\index{$N(L^\times)$}\index{$N_{L/k}(a)$}
            Let $L/k$ be a finite field extension. Then the \emph{field norm} $N_{L/k}$ of $a\in L$ is defined as
                         $$N_{L/k}(a)=det([\rho(a)])$$
            where $[\rho_a]$ is the matrix of regular representation $\rho_a:x\mapsto ax$ in $L$.
            
            We denote $N(L^\times)=\{N_{L/k}(a): \ a\in L^\times\}$ as a subgroup of $k^\times$.
        \end{definition}
        
        \begin{lemma}\label{4.2.3}
            If $L/k$ is a field extension. $N_{L/k}$ is a field norm of $L/k$. Then
            \begin{enumerate}[(i)]
                
                \item Define the characteristic polynomial of $[\rho_a]$ to be $\chi([\rho_a])(x)\defeq det(xI-[\rho_a])$. Write $\chi([\rho_a])(x)=x^n-a_1 x^{n-1}+\cdots+(-1)^n a_n$. Then $$N_{L/k}(a)=(-1)^n \chi([\rho_a])(0)=a_n\in k;$$
                \item $\chi([\rho_a])(x)=(m_a(x))^r$, where $r=n/m$, $m=[k(a):k]$, $m_a(x)$ is the minimal polynomial of $a$ over $k$ of degree $m$;
                \item Let $m_a(x)=\prod_{i=1}^m (x-u_i)$ be a factorization of $m_a(x)$ in the splitting field. Then an equivalent definition to the norm is $$N_{L/k}(a)=(\prod_{i=1}^m u_i)^{r};$$
                \item $N_{L/k}: L^\times\to k^\times$ is a group homomorphism from the multiplicative group of L to the multiplicative group of k.
                $$N_{L/k}(ab)=N_{L/k}(a)N_{L/k}(b)\quad N_{L/k}(sa)=s^n N_{L/k}(a)\quad  a,b\in L,\ s\in k$$ where $n=[L:k]$.
                In particular, if $L/k$ is Galois, $$N_{L/k}(a)=\prod_{\sigma\in Gal(L/k)} \sigma(a);$$
                \item For any $\sigma\in Aut_k L$, $$N_{L/k}(\sigma(a))=N_{L/k}(a)=\sigma N_{L/k}(a).$$ 
            \end{enumerate}
            \begin{proof}
            One can find a proof in many classical texts, say in p.p. 9-11 \cite{ash2010ANT}.
            \end{proof}

        \end{lemma}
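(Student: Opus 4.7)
The plan is to dispatch the five items in order, since each later claim feeds on the previous. For $(i)$, I would simply unpack the characteristic polynomial at $x=0$: by definition $\chi([\rho_a])(0)=\det(-[\rho_a])=(-1)^n\det([\rho_a])=(-1)^nN_{L/k}(a)$, while evaluating the expansion $x^n-a_1x^{n-1}+\cdots+(-1)^na_n$ at $0$ gives $(-1)^na_n$. Comparing yields $N_{L/k}(a)=a_n$, and $a_n\in k$ because $\chi$ has coefficients in $k$.

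For $(ii)$, which I expect to be the main technical step, I would exploit the tower $k\subseteq k(a)\subseteq L$. Pick a $k(a)$-basis $\{y_1,\ldots,y_r\}$ of $L$ with $r=n/m$, and combine it with the $k$-basis $\{1,a,\ldots,a^{m-1}\}$ of $k(a)$ to get the $k$-basis $\{y_j,y_ja,\ldots,y_ja^{m-1}\}_{j=1}^r$ of $L$. In this basis, $\rho_a$ acts on each block $\{y_j,y_ja,\ldots,y_ja^{m-1}\}$ as the companion matrix of $m_a(x)$, so $[\rho_a]$ is block-diagonal with $r$ identical blocks. The characteristic polynomial of each block is $m_a(x)$, hence $\chi([\rho_a])(x)=m_a(x)^r$. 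This is the step requiring the most care, since one must verify that the action really is block-diagonal and that the minimal polynomial of $a$ coincides with the characteristic polynomial of its companion matrix.

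For $(iii)$, I would combine $(i)$ and $(ii)$: the constant term of $m_a(x)^r=\prod_{i=1}^m(x-u_i)^r$ is $(-1)^{mr}\bigl(\prod_{i=1}^m u_i\bigr)^r=(-1)^n\bigl(\prod_{i=1}^m u_i\bigr)^r$, so $N_{L/k}(a)=(-1)^n\chi([\rho_a])(0)=\bigl(\prod_{i=1}^m u_i\bigr)^r$. For the multiplicativity in $(iv)$, I would use $\rho_{ab}=\rho_a\rho_b$ together with the multiplicativity of determinants, and $\rho_{sa}=s\rho_a$ for $s\in k$ gives the scalar rule. In the Galois case, $Gal(L/k)$ acts on the set of roots $\{u_1,\ldots,u_m\}$ of $m_a(x)$ transitively, and the stabilizer of $a$ is $Gal(L/k(a))$, which has order $r=n/m$; thus each $u_i$ is hit exactly $r$ times by $\sigma\mapsto\sigma(a)$, giving $\prod_{\sigma\in G}\sigma(a)=\bigl(\prod_{i=1}^m u_i\bigr)^r=N_{L/k}(a)$ by $(iii)$.

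Finally, $(v)$ is immediate from $(iv)$ in the Galois case, since $\sigma$ permutes $Gal(L/k)$ by left multiplication, so $N_{L/k}(\sigma(a))=\prod_{\tau}\tau\sigma(a)=\prod_{\tau'}\tau'(a)=N_{L/k}(a)$; and as $N_{L/k}(a)\in k$, we have $\sigma N_{L/k}(a)=N_{L/k}(a)$ trivially. In the general (not necessarily Galois) case, one can pass to a Galois closure or observe via $(iii)$ that $N_{L/k}(\sigma(a))$ involves the same multiset of conjugates. The heart of the lemma is really $(ii)$; everything else is linear algebra or bookkeeping over the splitting field.
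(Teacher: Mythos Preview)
Your outline is correct and follows the standard route; the paper itself does not supply a proof at all but merely cites a classical reference (Ash, \emph{Algebraic Number Theory}, pp.~9--11), so your argument is strictly more detailed than what appears there. One small remark on $(v)$ in the non-Galois case: rather than passing to a Galois closure, it is cleanest to note directly that $\sigma\in Aut_k L$ fixes $k$, so $m_{\sigma(a)}(x)=m_a(x)$, and then $(iii)$ gives $N_{L/k}(\sigma(a))=N_{L/k}(a)$ immediately.
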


        \begin{theorem}\label{4.2.4}
            $k^\times/N(L^\times)\cong Br(L/k)$ via the map $rN(L^\times)\mapsto [(\sigma, L, r)]$.
        \end{theorem}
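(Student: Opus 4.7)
The plan is to apply the first isomorphism theorem to the map $\phi: k^\times \to Br(L/k)$, $r \mapsto [(\sigma, L, r)]$, showing it is a surjective homomorphism with kernel exactly $N(L^\times)$. Well-definedness at the level of $k^\times$ is automatic: by Theorem \ref{4.1.7} the cyclic algebra $(\sigma, L, r)$ is central simple with $L$ as a splitting field, so $[(\sigma, L, r)] \in Br(L/k)$.

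First I would verify that $\phi$ is a homomorphism. Writing $\gamma^{(r)}$ for the factor set of $(\sigma, L, r)$ as in Definition \ref{4.2.2}, a direct inspection gives $\gamma^{(r)}_{\sigma^i,\sigma^j} \gamma^{(s)}_{\sigma^i,\sigma^j} = \gamma^{(rs)}_{\sigma^i,\sigma^j}$, so Lemma \ref{4.1.11} yields $(\sigma, L, r) \otimes (\sigma, L, s) \sim (\sigma, L, rs)$, i.e.\ $\phi(r)\phi(s) = \phi(rs)$. Surjectivity is already essentially done: Corollary \ref{4.1.8} represents every class in $Br(L/k)$ by a crossed product $(G, L, \gamma)$ with $G = \langle \sigma \rangle$, and the normalization argument given between Definition \ref{4.2.2} and Lemma \ref{4.2.3} (replacing $u_{\sigma^i}$ by $u_\sigma^i$) shows that any such crossed product is isomorphic to some $(\sigma, L, r)$ with $r \in k^\times$.

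The main step, and the expected main obstacle, is the kernel computation. By Theorem \ref{4.1.10}, $\phi(r) = 1$ iff the cyclic factor set $\gamma^{(r)}$ is a coboundary, i.e.\ there exists $\mu: G \to L^\times$ with $\gamma^{(r)}_{\sigma^i, \sigma^j} = \mu_{\sigma^i}\, \sigma^i(\mu_{\sigma^j})\, \mu_{\sigma^{i+j}}^{-1}$. Writing $\mu_i \defeq \mu_{\sigma^i}$ and setting $a \defeq \mu_1$, the cocycle conditions for $i+j < n$ force $\mu_0 = 1$ and inductively $\mu_i = a\sigma(a)\cdots \sigma^{i-1}(a)$. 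For $i + j = n$ (so $\sigma^{i+j} = 1$ and $\mu_{i+j} = \mu_0 = 1$), the condition becomes
\[
    r = \mu_i \sigma^i(\mu_j) = \prod_{t=0}^{n-1} \sigma^t(a) = N_{L/k}(a),
\]
using Lemma \ref{4.2.3}(iv). Conversely, given $r = N_{L/k}(a)$, defining $\mu_i = a\sigma(a)\cdots\sigma^{i-1}(a)$ for $0 \leq i < n$, I would check that $\mu_i \sigma^i(\mu_j) = \mu_{i+j}$ for $i+j < n$ and $\mu_i \sigma^i(\mu_j) = N_{L/k}(a) \cdot \mu_{i+j-n} = r\mu_{i+j-n}$ for $i+j \geq n$, which is exactly the coboundary condition for $\gamma^{(r)}$. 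Hence $\ker \phi = N(L^\times)$.

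Combining the three steps with the first isomorphism theorem yields the asserted isomorphism $k^\times/N(L^\times) \cong Br(L/k)$, $rN(L^\times) \mapsto [(\sigma, L, r)]$. The technical subtlety I would pay attention to is bookkeeping in the coboundary computation for indices with $i + j \geq n$, since the reduction modulo $n$ in $G$ is what turns the telescoping product into the full norm $N_{L/k}(a)$; this is where the group-theoretic definition of the cyclic cocycle meets the field-theoretic definition of the norm.
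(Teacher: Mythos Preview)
Your proof is correct and follows essentially the same approach as the paper: the paper factors the argument through the isomorphism $H^2(G,L^\times)\cong Br(L/k)$ of Corollary~\ref{4.1.12} and then shows $k^\times/N(L^\times)\cong H^2(G,L^\times)$ via exactly the same coboundary computation you give, while you instead unpack~\ref{4.1.12} into its ingredients (\ref{4.1.8}, \ref{4.1.10}, \ref{4.1.11}) and apply them directly. The kernel calculation, which is the heart of the matter, is identical in both.
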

        
        \begin{proof}
        This theorem follows from the isomorphism $H^2(G,L^\times)\cong Br(L/k)$ in \ref{4.1.12} if we can show $k^\times/N(L^\times)\cong H^2(G,L^\times)$ in the case $G=\left<\sigma \right>$, $\gamma_r$ is the factor set in $(\sigma, L, r)$ in \ref{4.2.2}. Let $\phi:k^\times\to H^2(G,L^\times)$, $r\mapsto \gamma_r B^2(G,L^\times)$, where 
        \[ \gamma_{r_{\sigma^i,\sigma^j}}=
               \left\{
                    \begin{array}{lll}
                          1& if\ 0\leq i+j<n &\\
                        r& if\ i+j\geq n & for \ 0\leq i,j<n
                    \end{array} 
                \right. 
            \]
        Take $w=\prod_{i=1}^n \sigma^i(a)\in N(L^\times)$, we write $\gamma=\gamma_w$. If $\gamma_{\sigma^i,\sigma^j}=\mu_{\sigma^i}\sigma(\mu_{\sigma^j})\mu_{\sigma^{i+j}}^{-1}$, we have $\mu_{\sigma^{i+j}}=\gamma_{\sigma^i,\sigma^j}^{-1}\mu_{\sigma^i}\sigma(\mu_{\sigma^j})$. $\mu_1=\mu_1\sigma(\mu_1)$, which means $\mu_1=1$. For $i<n$, $\mu_{\sigma^i}=\mu_{\sigma^{1+(i-1)}}=\mu_\sigma \sigma(\mu_{\sigma^{i-1}})$ and inductively, $\mu_{\sigma^i}=\prod_{j=0}^{i-1} \sigma^j(\mu_\sigma)$.
        For $i=n$, $\mu_{\sigma^n}=\gamma_{\sigma,\sigma^{n-1}}^{-1}\mu_{\sigma}\sigma(\mu_{\sigma^{i-1}})=w^{-1} w=1$. Note that this is constant because $\mu_{\sigma^n}=\mu_1=1$ and this happens only when $\gamma_{\sigma,\sigma^{n-1}}=w\in N(L^\times)$. By properties of factor set we have $\gamma_w\in B^2(G,L^\times)$. Hence for all $w\in N(L^\times)$, $\gamma_w\in B^2(G,L^\times)$. We also see if $r\notin N(L^\times)$, such $\mu_{\sigma^n}\neq \mu_1$ and such $\mu$ is not well-defined, so $\gamma_r\notin B^2(G,L^\times)$. Therefore $ker(\phi)=N(L^\times)$. $\phi$ is obviously surjective, we have $k^\times/N(L^\times)\cong H^2(G,L^\times)$.
        \end{proof}
        
        \begin{corollary}\label{4.2.6}
            Let $E/k$ be a cyclic extension and $L$ be a subfield of $E$. Let $Gal(E/k)=\left<\sigma\right>$, then the cyclic algebras $(\bar{\sigma}, L, r)\sim (\sigma, E, r^m)$ where $m=[E:L]$, where $\bar{\sigma}$ restriction of $\sigma$ to $L$ that generates $Gal(L/k)$.
        \end{corollary}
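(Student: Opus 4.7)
The plan is to derive this corollary from Theorem \ref{4.1.13} (inflation) together with the cyclic‐algebra normalisation that precedes Definition \ref{4.2.2}. Let $n=[E:k]=dm$ with $d=[L:k]$, and set $H=\mathrm{Gal}(E/L)$. Since $G=\langle\sigma\rangle$ is cyclic, $H=\langle\sigma^d\rangle$ and $G/H$ is cyclic of order $d$, with generator $\bar{\sigma}$ (the image of $\sigma$) naturally identified with the restriction $\sigma|_L$ that generates $\mathrm{Gal}(L/k)$. Let $\gamma'$ be the factor set of $(\bar{\sigma},L,r)$ as specified in Definition \ref{4.2.2}, and define its inflation $\gamma$ on $G\times G$ by $\gamma_{\sigma^a,\sigma^b}=\gamma'_{\bar{\sigma}^a,\bar{\sigma}^b}$. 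Theorem \ref{4.1.13} then gives $(\bar{\sigma},L,r)\sim (G,E,\gamma)$.

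The remaining task is to identify $(G,E,\gamma)$ with the cyclic algebra $(\sigma,E,r^m)$. Following the construction right before Definition \ref{4.2.2}, I would switch to the basis $\{u^i\}_{0\le i<n}$ with $u\defeq u_\sigma$, so that $(G,E,\gamma)$ becomes the cyclic algebra $(\sigma,E,s)$ with constant $s=u^n\in k^\times$. A straightforward induction using $u_\sigma\,c=\sigma(c)\,u_\sigma$ produces $u^k=c_k\,u_{\sigma^k}$ where $c_{k+1}=\sigma(c_k)\gamma_{\sigma,\sigma^k}$, and hence
\[
    u^n \;=\; \prod_{l=1}^{n-1}\sigma^{\,n-1-l}\!\bigl(\gamma_{\sigma,\sigma^l}\bigr).
\]
From the shape of $\gamma'$ in Definition \ref{4.2.2}, the inflated value $\gamma_{\sigma,\sigma^l}$ equals $r$ precisely when $1+(l\bmod d)\ge d$, i.e.\ when $l\equiv d-1\pmod d$; all other factors are $1$. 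In $\{1,\dots,n-1\}$ this happens exactly at $l=(k+1)d-1$ for $k=0,1,\dots,m-1$, giving $m$ non‑trivial factors.

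The crux is that for each such $l$, the twist $n-1-l=(m-k-1)d$ is a multiple of $d$, so $\sigma^{n-1-l}\in\langle\sigma^d\rangle=H=\mathrm{Gal}(E/L)$. Since $r\in L=E^H$, every $\sigma^{n-1-l}$ acts trivially on $r$, and the product collapses to $u^n=r^m$. Thus $(G,E,\gamma)\cong(\sigma,E,r^m)$, and combining this with the inflation equivalence yields $(\bar{\sigma},L,r)\sim(\sigma,E,r^m)$. The main obstacle is precisely this bookkeeping: one must carefully index the non‑trivial entries of the inflated cocycle and notice that the accumulated $\sigma$‑twists on $r$ lie in $\mathrm{Gal}(E/L)$, since otherwise the formal product would involve conjugates of $r$ rather than $r^m$.
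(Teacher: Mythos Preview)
Your argument is correct. Both you and the paper start from Theorem~\ref{4.1.13}, but you diverge in how the cyclic constant over $E$ is identified. The paper works through the explicit isomorphism $\phi$ built in the proof of \ref{4.1.13}, sending $u_\sigma\mapsto \bar u_{\bar\sigma}P_\sigma$, and then evaluates $\phi(u_\sigma^{\,n})$ using the telescoping identity $P_{\sigma\tau}=P_\sigma\,\sigma(P_\tau)$ together with $(\bar u_{\bar\sigma})^d=r'$; this collapses to $r'^m$, whence the conclusion. You instead stay entirely inside the inflated crossed product $(G,E,\gamma)$ and compute $u_\sigma^{\,n}$ directly from the cocycle values, using only the \emph{statement} of \ref{4.1.13} rather than the matrices $P_\sigma$ from its proof. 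Your route is a bit more elementary and transparent: it makes explicit exactly which $m$ entries of the inflated cocycle contribute, and why each Galois twist is harmless. The paper's route, by contrast, packages the same bookkeeping into the single relation \eqref{4.1.13.3}.

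One small simplification you can make: you argue that the twists $\sigma^{n-1-l}$ lie in $H=\mathrm{Gal}(E/L)$ and hence fix $r\in L$. In fact $r\in k^\times$ (this is part of the cyclic-algebra normalisation preceding Definition~\ref{4.2.2}), so \emph{every} power of $\sigma$ fixes $r$, and the divisibility observation, while correct, is not needed.
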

            
        \begin{proof}
        This is a special case of theorem \ref{4.1.13}.
        Consider $(\bar{\sigma}, L ,r')$ such that $$(\sigma, E, r^m)\cong (\bar{\sigma}, L ,r')\otimes M_m(k)$$ by the isomorphism given in \ref{4.1.13}. We claim that $r'=r$.
        
        Now $G=Gal(E/k)$, $H=Gal(E/L)$, $n=|G|$, $m=|H|$, and $d=\frac{n}{m}=|Gal(L/k)|$.
        Denote $u_\sigma$, $\bar{u}_{\bar{\sigma}}$ to be the elements in $(\sigma, E, r^m)$ and $(\bar{\sigma}, L ,r')$ indexed by elements in $G$ and $H$, and $(u_\sigma)^n=r^m$, $(\bar{u}_{\bar{\sigma}})^d=r'$. Now let
        \begin{align*}
            \phi: (\sigma, E, r^m)  &\xrightarrow{\sim}  (\bar{\sigma}, L ,r')\otimes M_m(k)\\
                  u_\sigma     &\mapsto  \bar{u}_{\bar{\sigma}}P_\sigma.
        \end{align*}
        We have
        \begin{align*}
            \phi(r^m)&=\phi(u_\sigma^n)=(\bar{u}_{\bar{\sigma}}P_\sigma)^n=\prod_{i=1}^n \bar{\sigma}^i(P_\sigma)((\bar{u}_{\bar{\sigma}})^d)^m\\
            &=\prod_{i=1}^n \sigma^i(P_\sigma)r'^m\stackrel{\eqref{4.1.13.3}}{=}P_{\sigma^n}r'^m=(1\cdot P_1)r'^m\\
            &=\phi(r'^m)
        \end{align*}
        because $r, r'\in k$, and $k$ is fixed under $\phi$. This shows $r=r'$ and we are done with the proof.
        
        \end{proof}

        \chapter{Valuation and Local Fields}
        This chapter introduce the theory of valuation. The valuations are useful when studying local fields and their finite dimensional extensions. It is mainly based on \cite{jacobson2009}, unless explicitly cited. 
        \section{Valuation and Discrete valuation}

        \begin{definition}[ordered abelian group]\index{ordered abelian group}
        An \emph{ordered abelian group} is a pair $(G,H)$ where $G$ is an abelian group and $H$ is a subset of $G$ defining the order. It satisfies the following properties:
        \begin{enumerate}[(i)]
            \item $G$ is a disjoint union of $H\cup\{1\}\cup H^{-1}$, where $H^{-1}=\{h^{-1}: \ h\in H\}$;
            \item $H$ is multiplicatively closed in $G$.
        \end{enumerate}
        \end{definition}
        
        \begin{remark}
            $H$ is used to define an order in $G$ in the following way. If $g_1>g_2$, then $g_1^{-1}g_2\in H$. It is transitive, total (one and only one possibility: $g_1>g_2$, $g_1=g_2$, $g_1<g_2$), and multiplication compatible ($g_1>g_2$, $g_3>g_4$ $\implies$ $g_1 g_3>g_2 g_4$, $g_2^{-1}>g_1^{-1}$).
            Moreover, if we have an abelian group and a total order relation $<$, we can define $H=\{h: \ h<1\}$ and get an ordered abelian group. It can be easily seen that not every abelian group can be ordered, and if $g^n=1$ for an ordered $G$, then $g=1$. 
        \end{remark}
            
        Now for $(G,H)$, we adjoin $\{0\}$ to it and define $0\cdot 0=0$, $g>0$, $0\cdot g=0=g\cdot 0$, for all $g\in G$.
        
        \begin{definition}[valuation]\index{valuation}
            A \emph{valuation} of a field $k$ is a map $\phi:k\to G\cup \{0\}$, where $(G,H)$ is an ordered abelian group, satisfying
            \begin{enumerate}[(i)]
                \item $\phi(a)\geq 0$ for all $a$, and $\phi(a)=0$ if and only if $a=0$;
                \item $\phi(ab)=\phi(a)\phi(b)$;
                \item $\phi(a+b)\leq max\{\phi(a), \phi(b)\}$, $a,b\in k$.\quad (strict triangle inequality) 
            \end{enumerate}
        \end{definition}
            
        \begin{remark}
            When $(G,H)=R_{>0}^\times$, $\phi=|\cdot|$ is a \emph{non-Archimedean} absolute value.\index{absolute value!non-Archimedean}. If $(iii)$ is replaced by the usual triangle inequality, $|\cdot|$ is an \emph{Archimedean} \index{absolute value!Archimedean} absolute value.  We call $\phi(k^\times)$ the \emph{value group}, a subgroup of $G$. We can without loss of generality replace $G$ by $\phi(k^\times)$.\index{absolute value}
        \end{remark}
        
        \begin{definition}[discrete valuation]\index{valuation!discrete}
            A valuation is \emph{discrete} if the valuation group is cyclic.
        \end{definition}
        
        Here are some commonly used facts for valuation. Since $G$ has no element of finite order other than $1$, we have $\phi(a^n)=\phi(b^n)$ implies $a=b$. In particular, if $u$ is a unity of $1$, then $\phi(u)=1$, $\phi(-1)=\phi(1)=1$.
            
        \begin{definition}[equivalent valuations]\index{valuation!equivalent}
        Two valuations $\phi_1$, $\phi_2$ on $k$ are \emph{equivalent} if there exist an order-preserving map (order isomorphism) $\eta$ of value groups $\phi_1(k^\times)$, $\phi_2(k^\times)$ such that $\phi_2=\eta \phi_1$. This is obviously an equivalence relation of valuations.
        \end{definition}
        
        \section{Valuation Ring}
            There is an equivalent way of saying valuations of a field: The valuation ring. It is a subring of the field $k$.
            
        \begin{definition}[valuation ring]\index{valuation!ring}
            Let $k$ be a field and $R$ be a subring in $k$. $R$ is called a \emph{valuation ring} if for every $a\in k$, either $a\in R$ or $a^{-1}\in R$.
        \end{definition}
        
        \begin{lemma} \label{5.2.2}
        A valuation ring $R$ of a field $k$ is in one-to-one correspondence with the equivalent classes of valuations $[\phi]$ of $k$.
        \end{lemma}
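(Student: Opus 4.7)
The plan is to exhibit explicit maps in both directions and verify they are mutual inverses.

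In the forward direction, starting from a valuation $\phi : k \to G \cup \{0\}$, I would set $R_\phi := \{a \in k : \phi(a) \leq 1\}$. Closure under multiplication is immediate from axiom (ii), and closure under addition is precisely the content of the strict triangle inequality (iii). That $R_\phi$ is a valuation ring follows from the dichotomy: for any $a \in k^\times$, either $\phi(a) \leq 1$ or else $\phi(a^{-1}) = \phi(a)^{-1} < 1$. If $\phi_2 = \eta \circ \phi_1$ for an order isomorphism $\eta$ between the value groups, then $\eta$ is a group isomorphism so $\eta(1) = 1$, giving $\phi_1(a) \leq 1 \iff \phi_2(a) \leq 1$; hence equivalent valuations produce the same ring, and the assignment descends to equivalence classes $[\phi] \mapsto R_\phi$.

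In the backward direction, given a valuation ring $R \subseteq k$, I would form the abelian group $G := k^\times / R^\times$, set $H := \{aR^\times : a \in R \setminus R^\times\}$, and define $\phi_R$ to send $0 \mapsto 0$ and $a \in k^\times$ to its coset $aR^\times$. First I would verify that $(G,H)$ is an ordered abelian group: the partition $G = H \cup \{1\} \cup H^{-1}$ reflects the trichotomy that every $a \in k^\times$ satisfies exactly one of $a \in R^\times$, $a \in R \setminus R^\times$, or $a^{-1} \in R \setminus R^\times$; and $H$ is multiplicatively closed, since $ab \in R^\times$ for $a, b \in R \setminus R^\times$ would force $a^{-1} = b(ab)^{-1} \in R$, contradicting $a \notin R^\times$. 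Multiplicativity of $\phi_R$ is then automatic as $\phi_R$ is the quotient map on $k^\times$. The strict triangle inequality comes out thus: assuming $\phi_R(a) \leq \phi_R(b)$ with $b \neq 0$, the inequality $\phi_R(a/b) \leq 1$ forces $a/b \in R$, so $1 + a/b \in R$, whence $\phi_R(a+b) = \phi_R(b)\phi_R(1 + a/b) \leq \phi_R(b)$.

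Finally, I would check that these two constructions are mutually inverse up to equivalence. The valuation ring attached to $\phi_R$ is $\{a \in k : aR^\times \leq 1\} \cup \{0\}$, which equals $R$ by the trichotomy above. Conversely, starting from $\phi$, the map $k^\times/R_\phi^\times \to \phi(k^\times)$ sending $aR_\phi^\times \mapsto \phi(a)$ is well-defined because $R_\phi^\times$ is exactly the kernel of $\phi|_{k^\times}$, and it is a group isomorphism that is order-preserving by construction, exhibiting $\phi \sim \phi_{R_\phi}$. The main obstacle is the backward direction: choosing the right positive cone $H$ on $k^\times/R^\times$ so that the strict triangle inequality descends from the ring axioms of $R$ via the factorization $a + b = b(1 + a/b)$. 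Once $H$ is identified, every remaining verification is routine bookkeeping.
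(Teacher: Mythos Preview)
Your proof is correct and follows essentially the same route as the paper: the same valuation ring $R_\phi=\{a:\phi(a)\le 1\}$ in the forward direction, the same canonical valuation on $G=k^\times/R^\times$ with positive cone $H$ given by the nonzero non-units in the backward direction, and the same verification that $\ker(\phi|_{k^\times})=R_\phi^\times$ yields the order isomorphism witnessing $\phi\sim\phi_{R_\phi}$. If anything, you are slightly more explicit than the paper in checking that equivalent valuations give the same ring and in closing the loop on both composites.
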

        
        \begin{proof}
            Given a valuation $\phi$, define $R\defeq \{a\in k: \ \phi(a)\leq 1\}$ This is a subring because $1\in R$, $\phi(a-b)\leq max\{\phi(a),\phi(b)\}\leq 1$, $\phi(ab)=\phi(a)\phi(b)\leq 1$ for $a,b\in R$. If $a\notin R$, then $\phi(a)>1$, $\phi(a^{-1})=\phi(a)^{-1}<1$ so $a^{-1}\in R$. $R$ is a valuation ring. It is clear that $[\phi]$ corresponds to the same $R$.
            
            Now given a valuation ring $R$ in $k$. Let $U$ be the units in $R$. $P$ be the set of non-units of $R$. Let $R^\times=R\cap k^\times$, $P^\times=P\cap k^\times$. $U$ is a subgroup of $k^\times$. Form a factor group $G_0\defeq k^\times/U$. $H_0\defeq\{bU: \ b\in P^\times\}\subseteq G_0$.
            
            We claim $(G_0,H_0)$ is an ordered abelian group. If $a\in k^\times$, then $a\in R$ or $a^{-1}\in R$. There are three cases. If $a$, $a^{-1}\in R$, then $a\in U=1$. If $a\in R$, $a^{-1}\notin R$, then $a\in P^\times$, $aU\in H_0$. If $a\notin R$, then $a^{-1}\in P^\times$, $aU\in H_0^{-1}$. The first condition of ordered abelian group is proved.
            
            If $b_1$, $b_2\in P^\times$, then $b_1 b_2\in P^\times$, for otherwise if $cb_1 b_2=1$, $c\in R$, then $cb_1$ is an inverse to $b_1$. Hence $H_0$ is multiplicatively closed.
            
            Now define $\phi_0:k\to G_0\cup \{0\}$ by $\phi_0(0)=0$, $\phi_0(a)=aU$, for $a\neq 0$. To see $\phi_0$ is a valuation, first two conditions are straightforward. For $a,b \in k$, if either $a=0$ or $b=0$, $(iii)$ is satisfied. If $a\neq 0$, $b\neq 0$, then either $a^{-1}b\in R^\times$, or $b^{-1}a\in R^\times$. Assume $a^{-1}b \in R^\times$. $\phi_0(a^{-1}b)\leq 1$ since $R^\times=U\cup P^\times$, $\phi_0(a)\geq\phi_0(b)$. Since $a^{-1}b+1\in R$, $\phi_0(a^{-1}b+1)\leq 1$. Now $\phi_0(a+b)=\phi_0(a)\phi_0(a^{-1}b+1)\leq \phi_0(a)=max\{\phi_0(a),\phi_0(b)\}$. Thus $\phi_0$ is a valuation, and we call it the \emph{canonical valuation} of $R$.
            
            Now take the arbitrary valuation $\phi':k\to G'\cup \{0\}$, $(G',H')$ is the ordered abelian group. Let $R$ be the corresponding valuation ring and $\phi_0$ be the canonical valuation of $R$. We want to show $[\phi']=[\phi_0]$. We have $\phi_0(a)=aU$, $a\neq 0$ for $U$ the units in $R$. On the other hand, the kernel of the group homomorphism $\phi'|_{k^\times}$ is $U$. Hence $\phi'(k^\times)\cong k^\times/U\cong \phi_o(k^\times)=G_0$. Let $\eta$ be this isomorphism of $\phi'(k^\times)$ and $\phi_0(k^\times)$. To show this is order preserving take $\phi_0(b)=bU\in H_0$, then $b\in P^\times\subseteq R$. $\phi'(b)<1$. Hence $\eta$ is order preserving. This concludes the proof.
          \end{proof}

        \begin{lemma}\label{5.2.3}
            $R$ is a valuation ring of $k$ with respect to the valuation $\phi$. Then $R$ is a local ring with the unique maximal ideal $P\defeq \{a\in R: \ \phi(a)<1\}$ which consists of non-units of $R$.
        
        \end{lemma}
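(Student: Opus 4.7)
The plan is to first identify the set $P$ with the set of non-units of $R$, then verify $P$ is a two-sided ideal, and finally conclude locality by the standard criterion that a ring whose non-units form an ideal is local with that ideal as its unique maximal ideal.

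First I would characterize units: an element $a\in R$ is a unit of $R$ iff $a^{-1}\in R$. Using that $a\in R$ forces $\phi(a)\leq 1$, together with the multiplicativity relation $\phi(a)\phi(a^{-1})=\phi(1)=1$, one sees $a^{-1}\in R$ iff $\phi(a^{-1})\leq 1$ iff $\phi(a)\geq 1$, which combined with $\phi(a)\leq 1$ forces $\phi(a)=1$. Conversely $\phi(a)=1$ gives $\phi(a^{-1})=1$, hence $a^{-1}\in R$. Therefore $R\setminus P$ is precisely the group of units of $R$, and $P$ is exactly the set of non-units.

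Next I would check that $P$ is an ideal. Closure under addition is immediate from the strict triangle inequality: for $a,b\in P$, $\phi(a+b)\leq\max\{\phi(a),\phi(b)\}<1$. For absorption, given $r\in R$ and $a\in P$, if $r=0$ then $ra=0\in P$, while otherwise $\phi(ra)=\phi(r)\phi(a)\leq\phi(a)<1$ using $\phi(r)\leq 1$ and the order-compatibility of multiplication in the value group. Since $\phi(1)=1$ (because $1$ is a root of unity in $G$, hence equal to $1$), we have $1\notin P$, so $P$ is a proper ideal.

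Finally, any proper ideal $I\triangleleft R$ contains no unit (otherwise $I=R$), so $I\subseteq P$; thus $P$ contains every proper ideal and is therefore the unique maximal ideal, making $R$ local. There is no genuine obstacle here; the only mild subtlety is remembering to treat the adjoined zero separately when invoking multiplicativity, and to invoke the fact (noted just before the definition of discrete valuation) that torsion elements of the value group are trivial in order to justify $\phi(1)=1$.
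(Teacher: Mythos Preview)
Your proposal is correct and follows essentially the same approach as the paper: identify $P$ with the set of non-units of $R$ via the multiplicativity of $\phi$, verify $P$ is an ideal using the strict triangle inequality, and conclude locality from the fact that the non-units form an ideal. Your version is slightly more explicit (you argue both directions of the unit characterization and flag the treatment of $0$), but the structure is the same.
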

            
        \begin{proof}
        Take $x$ to be a non-unit in $R$. $x^{-1}\notin R$. $\phi(x^{-1})>1$, $\phi(x)=\phi(x^{-1})^{-1}<1$ so $x\in P$. Therefore $P$ is the set of non-units in $R$.
        
        Since $\phi(a+b)\leq max\{\phi(a),\phi(b)\}<1$, $\phi(ra)=\phi(r)\phi(a)\leq 1\cdot \phi(a)<1$ for $a,b\in P$, $r\in R$. Therefore $P$ is an ideal, and hence maximal ideal as all non-units are in $P$.
        
        Since every maximal ideal consists of non-units, they are equal to $P$. $P$ is the unique maximal ideal. $R$ is local.
        \end{proof}
        
        \begin{definition}[residue field]\index{residue field}
        In the above setting, $R/P$ is called the \emph{residue field} of the valuation $\phi$.
        \end{definition}
        
        \begin{theorem}\label{5.2.5}
        The following statements are equivalent.
        \begin{enumerate}[(i)]
            \item $R$ is a valuation ring of a discrete, non-trivial $|\cdot |$ in its field of fractions $k$;
            \item $R$ is a local P.I.D.
        \end{enumerate}
        \end{theorem}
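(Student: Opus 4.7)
The plan is to prove the two implications separately. For (i)$\Rightarrow$(ii), lemma \ref{5.2.3} already gives that $R$ is local with maximal ideal $P=\{a\in R:|a|<1\}$, so only the P.I.D.\ property needs work. Since $|\cdot|$ is discrete and non-trivial, the value group $|k^\times|$ is infinite cyclic; I pick a generator $t=|\pi|<1$ with $\pi\in R$, so that the values of $|\cdot|$ on $R\setminus\{0\}$ are exactly $\{t^n:n\geq 0\}$. For any non-zero ideal $I\subseteq R$, the non-empty set $\{|a|:0\neq a\in I\}\subseteq\{t^n:n\geq 0\}$ has a maximum element (the one with smallest exponent), attained by some $a_0\in I$. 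Then for every $b\in I$, $|b/a_0|\leq 1$ gives $b/a_0\in R$, so $b\in a_0 R$ and $I=(a_0)$.

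For (ii)$\Rightarrow$(i), suppose $R$ is a local P.I.D.\ with fraction field $k$ and maximal ideal $P=(\pi)$. The central preliminary step is a normal form: every non-zero element of $R$ writes uniquely as $u\pi^n$ with $u\in R^\times$ and $n\geq 0$. Existence follows because $R$ is a U.F.D.\ and $\pi$ is, up to units, its only prime (any irreducible generates a maximal ideal, of which there is only one). Uniqueness: if $u_1\pi^{n_1}=u_2\pi^{n_2}$ with $n_1<n_2$, then $u_1=u_2\pi^{n_2-n_1}\in P$, contradicting $u_1\in R^\times$. This extends to $k^\times$, so every $x\in k^\times$ writes uniquely as $u\pi^n$ with $n\in\Z$. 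I would then define $v(x)=n$ and $|x|=c^{v(x)}$ for any fixed $0<c<1$ (with $|0|=0$); multiplicativity of $|\cdot|$ is immediate, and for the strong triangle inequality one factors the smaller power of $\pi$ out of $a+b$ and observes that what remains lies in $R$, yielding $v(a+b)\geq\min\{v(a),v(b)\}$. Finally $R=\{x\in k:v(x)\geq 0\}$ is exactly the valuation ring of this discrete, non-trivial valuation.

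The main obstacle is the normal-form step in (ii)$\Rightarrow$(i): establishing that every non-zero element of a local P.I.D.\ is a unit times a power of $\pi$, together with uniqueness of the exponent. Once that is in place, the valuation axioms drop out mechanically, and the (i)$\Rightarrow$(ii) direction is essentially a well-ordering argument on exponents in the value group. Everything else is bookkeeping.
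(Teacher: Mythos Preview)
Your proposal is correct and follows essentially the same route as the paper's proof: for (i)$\Rightarrow$(ii) you both pick a uniformizer $\pi$ generating the cyclic value group and show any ideal is generated by an element of maximal value (the paper phrases this as $I=P^t$, you as $I=(a_0)$, which amounts to the same thing), and for (ii)$\Rightarrow$(i) you both use that a local P.I.D.\ has, up to units, a single prime $\pi$, giving the normal form $u\pi^n$ from which the discrete valuation is read off. Your write-up is more explicit about verifying the valuation axioms and checking $R=\{x:v(x)\geq 0\}$, whereas the paper leaves these as understood.
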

            
        \begin{proof}
        ($(i)\implies (ii)$) Suppose $|\cdot|$ is discrete, non-trivial on a field $k$ and $R$ is the corresponding valuation ring. Let $c$ be a generator of $|k^\times|$ such that $c<1$. Let $\pi\in k^\times$ such that $|\pi|=c<1$ so $\pi\in P$, $P$ is the maximal ideal of $R$. 
        
        Now for every $a\in k^\times$, $|a|=c^s=|\pi|^s$ for some $s\in \Z$. This means $|a\pi^{-s}|=1$. Let $u\defeq a\pi^{-s}$ be a unit (\ref{5.2.3}), and $a=\pi^s u$. $a\in R$ if and only if $s\geq 0$. $a\in P$ if and only if $s>0$. Therefore $P=\pi R$ is principal. Now for any $I\ideal R$, choose $a\in I$ so that $|a|=|\pi|^t$ for $t$ as small as possible. $a=\pi^t u'$ for some unit $u'$. $\pi^t=au'^{-1}\in I$. $P^t\subseteq I$. Take $b\in I$, $b=\pi^l u''$ for $l\geq t$ by minimality of $t$, so $b=\pi^t\pi^{l-t}u''\in \pi^t R=P$. $I\subseteq P^t$. Hence $R$ is a P.I.D., and together with $\ref{5.2.3}$ $R$ is a local P.I.D.
        
        ($(ii)\implies (i)$) Suppose $R$ is a local P.I.D. Let $P$ be the maximal ideal. $P=\pi R$. The only primes in $R$ are $\pi$ and its associates (differ from $\pi$ by a unit). Hence, every non-zero element in $R$ is of the form $u\pi^k$, $k\geq 0$, $u$ a unit. Then every non-zero element of $k$ can be written as $v\pi^t$, $t\in \Z$, $v$ is a unit. Hence $R$ is a valuation ring of $k$ and the valuation is discrete.
        \end{proof}
            
        \section{Extension of Valuation}\label{5.3}
        
        The aim of this section is to show the existence and uniqueness of extension of $|\cdot|$ to an absolute value of the extension field.
        \begin{theorem}[uniqueness]\label{5.3.1}
            Let $|\cdot|$ be a non-trivial absolute value on $k$ with $k$ complete relative to it. $L/k$ is a finite field extension. If $|\cdot|$ can be extended to $|\cdot|$ in $L$, then it is necessarily unique. Moreover, $$|a|=|N_{L/k}(a)|^{1/[L:k]}\quad for\ a\in L,$$ and $L$ is complete relative to it.
        \end{theorem}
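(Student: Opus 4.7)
The plan is to first establish a general normed-vector-space lemma and then deduce both uniqueness and the explicit formula from it. Regard $L$ as a finite-dimensional $k$-vector space. Any extension $|\cdot|$ of the given absolute value to $L$ restricts to a $k$-vector-space norm on $L$ (i.e.\ $|ca|=|c|\,|a|$ for $c\in k$, $a\in L$, together with the triangle inequality and positive-definiteness). The key technical input I would prove first is the following: if $k$ is complete with respect to a non-trivial absolute value and $V$ is a finite-dimensional $k$-vector space, then any two $k$-vector-space norms on $V$ are equivalent, i.e.\ define the same topology. This I would prove by induction on $\dim_k V$, comparing an arbitrary norm with the ``sup norm'' relative to a chosen $k$-basis: one direction (the sup norm dominates) is immediate by the triangle inequality, and the other direction is the substantial part, where completeness of $k$ is used to show that a hypothetical failure produces a Cauchy sequence whose limit contradicts linear independence.

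Given this lemma, uniqueness is quick. If $|\cdot|_1$ and $|\cdot|_2$ are two extensions of $|\cdot|$ to $L$, they are equivalent as norms, so they define the same topology on $L$. Two multiplicative absolute values defining the same topology must satisfy $|\cdot|_2=|\cdot|_1^{s}$ for some $s>0$; but they both restrict to the same non-trivial $|\cdot|$ on $k$, which forces $s=1$ and hence $|\cdot|_1=|\cdot|_2$. Completeness of $L$ is also immediate from the lemma: picking a $k$-basis $v_1,\dots,v_n$ of $L$ identifies $L$ with $k^n$ as topological $k$-vector spaces (the identification is a homeomorphism by the equivalence of norms), and $k^n$ is complete because $k$ is complete.

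For the explicit formula, define
\[
\|a\|\defeq |N_{L/k}(a)|^{1/[L:k]},\qquad a\in L.
\]
I would verify three things: $\|a\|=0\iff a=0$ (since $N_{L/k}$ is nonzero on $L^{\times}$, using that $a$ is invertible in the field $L$); multiplicativity $\|ab\|=\|a\|\,\|b\|$, which follows from \ref{4.2.3}(iv); and agreement with $|\cdot|$ on $k$, because $N_{L/k}(c)=c^{n}$ for $c\in k$ so $\|c\|=|c^n|^{1/n}=|c|$. Once one shows that $\|\cdot\|$ satisfies the triangle inequality and is therefore itself a valid absolute value extending $|\cdot|$, uniqueness forces $|a|=\|a\|$, which is the desired formula.

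The main obstacle is verifying the triangle inequality for $\|\cdot\|$, since it is not visible from the multiplicative definition. The cleanest route is to reduce it, via multiplicativity, to the statement ``$\|a\|\leq 1\implies \|1+a\|$ is bounded by an absolute constant'', and then to deduce the actual triangle inequality by the standard trick of applying this bound to $a/b$ and raising to high powers. To obtain the boundedness one uses the fact, coming from Galois theory and \ref{4.2.3}(iii), that $\|a\|$ equals (up to the $r$-th power) the product of the roots of the minimal polynomial $m_a(x)$ of $a$; controlling $\|1+a\|$ then reduces to controlling the coefficients of $m_{1+a}$ in terms of those of $m_a$, which is a uniform polynomial estimate. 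The equivalence-of-norms lemma above is what guarantees that all of this is consistent with a genuine topology on $L$ and hence yields a bona fide absolute value.
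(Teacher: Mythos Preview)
Your equivalence-of-norms lemma and its consequences (uniqueness via ``same topology $\Rightarrow$ $|\cdot|_2=|\cdot|_1^s$ $\Rightarrow$ $s=1$'', and completeness via $L\cong k^n$ topologically) are correct and essentially match the paper's argument: the paper proves the same lemma in the guise of ``$\{a_n\}$ Cauchy $\iff$ each coordinate sequence $\{\alpha_{ni}\}$ Cauchy'', by the same induction with the same contradiction against linear independence.

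Where you diverge is in deriving the formula $|a|=|N_{L/k}(a)|^{1/n}$. You propose to show that $\|a\|\defeq|N_{L/k}(a)|^{1/n}$ is itself an absolute value on $L$ and then invoke uniqueness. But this is exactly the \emph{existence} theorem, which the paper states separately (and whose proof it omits as lengthy). You correctly flag the triangle inequality for $\|\cdot\|$ as the obstacle; your sketch (``control the coefficients of $m_{1+a}$ in terms of those of $m_a$'') is not convincing as written, since $|N_{L/k}(a)|\leq 1$ constrains only the constant term of $m_a$, not the other coefficients, so bounding $|N_{L/k}(1+a)|$ is not a straightforward polynomial estimate. Making this work in the Archimedean case in particular is delicate.

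The paper's route avoids this entirely. Since the hypothesis already hands you an extension $|\cdot|$ on $L$, argue by contradiction: if $|a|^n\neq|N_{L/k}(a)|$, after possibly replacing $a$ by $a^{-1}$ set $b=a^n N_{L/k}(a)^{-1}$, so $|b|<1$ but $N_{L/k}(b)=1$. Then $b^m\to 0$ in $L$; by the equivalence-of-norms lemma the coordinate functionals are continuous, hence so is the polynomial map $N_{L/k}$, and $1=N_{L/k}(b^m)\to 0$, a contradiction. This gives the formula (and hence uniqueness) in two lines, without ever needing to verify that $\|\cdot\|$ satisfies the triangle inequality. Your approach is not wrong, but it proves strictly more than the theorem asks and relocates the difficulty to a place where you have not actually resolved it.
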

        
        \begin{proof}
            Take a $k$-basis of $L$: $\{u_1,\cdots,\ u_r\}$ with $r=[L:k]$. For a sequence $\{a_n\}$ in $L$, we can form $r$ sequences in $k$ by $a_n=\sum_{i=1}^r \alpha_{ni}u_r$, for $\alpha_{ni}\in k$.
            
            First we want to show $\{a_n\}$ is Cauchy if and only if $\{\alpha_{ni}\}$ is Cauchy for every $1\leq i\leq r$. Note for later that this only requires $u_1,\cdots u_r$ to be linearly independent.
            
            If $\{\alpha_{ni}\}$ is Cauchy for all $i$, then by properties of Cauchy sequence, $\{a_n\}$ is Cauchy.
            
            For the other direction we use induction on $r$.  Assume $\{a_n\}$ is Cauchy. When $r=1$ it holds trivially. In general if $\{\alpha_{nr}\}$ is Cauchy, $b_n\defeq a_n-a_{nr}u_r=\sum_{i=1}^{r-1} \alpha_{ni}u_r$ is Cauchy by induction and the result follows. Hence it remains to show $\{\alpha_{nr}\}$ is Cauchy. If not, there is a real $\epsilon>0$ such that for every $N\in \N$, there is a pair $(p,q)$, with $p,q\geq N$ such that $|\alpha_{pr}-\alpha_{qr}|>\epsilon$. We can form a sequence $(p_k,q_k)$ such that $|\alpha_{p_k r}-\alpha_{q_k r}|>\epsilon$ for all $k$, where $p_1<p_2<\cdots$, and $q_1<q_2<\cdots$. Therefore $(\alpha_{p_k r}-\alpha_{q_k r})^{-1}$ exist. Let 
            \begin{equation}\label{5.3.1.1}
                c_k\defeq \frac{\alpha_{p_k}-\alpha_{q_k}}{\alpha_{p_k r}-\alpha_{q_k r}}=\sum_{i=1}^{r-1} \frac{\alpha_{p_k i}-\alpha_{q_k i}}{\alpha_{p_k r}-\alpha_{q_k r}}u_i+u_r=\sum_{i=1}^{r-1} \beta_{ki}u_i+u_r=d_k+u_r
            \end{equation}
            where $$\beta_{ki}=\frac{\alpha_{p_k i}-\alpha_{q_k i}}{\alpha_{p_k r}-\alpha_{q_k r}},\quad d_k=\sum_{i=1}^{r-1} \beta_{ki}u_i.$$ Since $(\alpha_{p_k}-\alpha_{q_k})\to 0$, $(\alpha_{p_k r}-\alpha_{q_k r})^{-1}<\epsilon^{-1}$. We have $c_k\to 0$. Therefore $d_k\to -u_r$. $\{d_k\}$ is Cauchy. By induction hypothesis $\{\beta_{ki}\}$ is Cauchy for every $1\leq i<r$. Since $k$ is complete, let $\beta_{ki}\to \beta_i\in k$. Passing to limit in equation \eqref{5.3.1.1} we have $\sum_{i=1}^{r-1} \beta_{i}u_i+u_r=0$ but it contradicts with the fact that $\{u_i\}$ is linearly independent. Hence $\{\alpha_{nr}\}$ is Cauchy and we proved the first claim.
            
            Now it is clear that $L$ is complete with respect to $|\cdot|$.
            
            It remains to prove $|a|=|N_{L/k}(a)|^{1/r}$ if $|\cdot|$ exist. Suppose not, by switching $a$ to be $a^{-1}$ we may assume $|a|^r<|N_{L/k}(a)|$. Consider $b=a^r N_{L/k}(a)^{-1}$. $|b|<1$ but $N_{L/k}(b)=\frac{N_{L/k}(a)}{N_{L/k}(a)}=1$. Since $|b|<1$, $b^n\to 0$. If we write $b^n$ in terms of basis $b_n=\sum_{i=1}^r l_i u_i$ then the arguments for Cauchy sequence implies $l_i\to 0$ for all $i$. Now because the norm map $N_{L/k}(\cdot)$ is continuous, $1=N_{L/k}(b^n)\to 0$ which gives a contradiction. The $|\cdot|$ is therefore unique, if exist.
         \end{proof}
            
        \begin{lemma}\label{5.3.2}
            Let $L/k$ be a finite field extension, $[L:k]=n$. Let $\phi$ be a valuation of $L$. Then the value group of $L$ is order isomorphic to a subgroup of the value group of $k$.
        \end{lemma}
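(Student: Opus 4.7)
Write $G_L = \phi(L^\times)$ and $G_k = \phi(k^\times)$; the restriction of $\phi$ to $k$ gives the containment $G_k \subseteq G_L$, so the content of the lemma is to produce an order-preserving group embedding $G_L \hookrightarrow G_k$ going the other way. The plan is to show that for every $\alpha \in L^\times$ some positive power $\phi(\alpha)^d$ with $1 \leq d \leq n$ lies in $G_k$; it then follows that the uniform $n!$-th power map $\eta: G_L \to G_L$, $g \mapsto g^{n!}$, takes values in $G_k$, and one checks that $\eta$ is the desired order-preserving embedding.

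The key ingredient will be the strong form of the ultrametric inequality: whenever $a_1, \dots, a_r \in L$ are non-zero with $\sum_i a_i = 0$, the maximum of $\phi(a_1), \dots, \phi(a_r)$ is attained at least twice. This follows from $\phi(a+b) \leq \max\{\phi(a), \phi(b)\}$ together with the observation that $\phi(a) > \phi(b)$ forces $\phi(a+b) = \phi(a)$. I will apply this to the minimal polynomial relation: take $\alpha \in L^\times$ with minimal polynomial $f(x) = x^m + c_{m-1} x^{m-1} + \cdots + c_0 \in k[x]$ of degree $m \leq n$ (with $c_0 \neq 0$ since $\alpha \neq 0$). Setting $c_m := 1$, the vanishing of $\sum_{i=0}^m c_i \alpha^i$ forces two distinct indices $0 \leq i < j \leq m$ at which $\phi(c_i)\phi(\alpha)^i = \phi(c_j)\phi(\alpha)^j$, which rearranges to
$$\phi(\alpha)^{j-i} = \phi(c_i)\phi(c_j)^{-1} \in G_k,$$
with $1 \leq j-i \leq n$.

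Since every exponent $d \leq n$ divides $n!$, this gives $\phi(\alpha)^{n!} \in G_k$ for every $\alpha \in L^\times$, so $\eta(g) = g^{n!}$ is a well-defined homomorphism $G_L \to G_k$ (a homomorphism because $G_L$ is abelian). It is injective and order-preserving because $G_L$, being ordered abelian, is torsion-free and satisfies $g > h \Leftrightarrow gh^{-1} > 1 \Leftrightarrow (gh^{-1})^{n!} > 1 \Leftrightarrow g^{n!} > h^{n!}$; therefore $G_L$ is order-isomorphic to its image $\eta(G_L) \subseteq G_k$, as required. The only delicate step is the strong ultrametric and the observation that the resulting maximum-attaining indices are genuinely distinct, but once this is in hand the rest is routine bookkeeping with exponents.
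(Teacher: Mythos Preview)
Your proof is correct and follows essentially the same approach as the paper: both use an algebraic dependence relation for $\alpha$ over $k$, apply the strong ultrametric inequality to find two terms of equal value, extract $\phi(\alpha)^{d}\in\phi(k^\times)$ for some $1\le d\le n$, and then use the $n!$-th power map as the order-embedding $\phi(L^\times)\hookrightarrow\phi(k^\times)$. The only cosmetic difference is that you work with the minimal polynomial specifically while the paper uses an arbitrary algebraic relation, and you spell out the torsion-free/order-preserving verification for $g\mapsto g^{n!}$ a bit more explicitly.
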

        
        \begin{proof}
            Because the extension is finite, every $a\in L$ satisfies a relation of the form $\alpha_1a^{n_1}+\cdots+\alpha_ka^{n_k}=1$ where $\alpha_i$ are non-zero elements in $k$ and $n\geq n_1>n_2>\cdots>n_k$. If for some $i$, $\phi(\alpha_i a^{n_i})>\phi(\alpha_j a^{n_j})$ for all $j$ with $j\neq i$, then $\phi(\sum_{j=1}^k \alpha_j a^{n_j})=\phi(\alpha_i a^{n_i})$, but $\phi(\sum_{j=1}^k \alpha_j a^{n_j})=0$, $\phi(\alpha_i a^{n_i})\neq 0$, we get a contradiction. Therefore there exist some pair $(i,j)$ with $i>j$ such that $\phi(\alpha_i a^{n_i})=\phi(\alpha_j a^{n_j})$. Then $\phi(a^{n_i-n_j})=\phi(\alpha_j\alpha_i^{-1})\in \phi(k^\times)$. Therefore all multiples of $\phi(a)^{n_i-n_j}$ is in $\phi(k^\times)$. In particular, for every $a \in L$, $\phi(a)^{n!}$ is always in $\phi(k^\times)$ since $n_i-n_j\leq n$. As $\phi(k^\times)$ has no finite order elements except $1$, the map $\phi(L^\times)\to \phi(k^\times)$, $g\mapsto g^{n!}$ is an order monomorphism of value groups. Hence $\phi(L^\times)$ is order isomorphic to a subgroup of $\phi(k^\times)$.
         \end{proof}
            
        \begin{theorem}[Existence, non-Archimedean case (Theorem 9.12 \cite{jacobson2009})]
            Let $k$ be a field with non-Archimedean absolute value $|\cdot|$. $L/k$ be a finite field extension. Then there exist an extension of absolute value $|\cdot|$ on $k$ to an absolute value $|\cdot|$ on $L$.
        \end{theorem}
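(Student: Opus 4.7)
The plan is to reformulate the problem via lemma \ref{5.2.2} as constructing a valuation ring $V \subseteq L$ sitting over the valuation ring $R_0 = \{a \in k : |a| \leq 1\}$ of $|\cdot|$, and then to convert the resulting (a priori not real-valued) valuation of $L$ into a real-valued absolute value using lemma \ref{5.3.2}. We may assume $|\cdot|$ is nontrivial, so that $P_0 := \{a \in k : |a| < 1\}$ is a nonzero maximal ideal of $R_0$; the trivial case is handled by extending trivially.

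My first step is to produce a local subring of $L$ lying over $R_0$. I would take $R$ to be the integral closure of $R_0$ in $L$, which satisfies $R \cap k = R_0$ because valuation rings are integrally closed. A standard determinant-trick (equivalently, Nakayama applied to the finitely generated $R_0$-module generated by a putative counterexample) shows $P_0 R \subsetneq R$, since otherwise an expression $1 = \sum p_i r_i$ with $p_i \in P_0$, $r_i \in R$ would force $1 \in P_0$. Zorn's lemma then supplies a maximal ideal $P \subseteq R$ containing $P_0 R$, whence $P \cap R_0 = P_0$. Localizing yields $S := R_P$, a local ring with maximal ideal $\mathfrak{m}_S = P R_P$ satisfying $\mathfrak{m}_S \cap k = P_0$ and $S \cap k = R_0$.

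Next I would invoke Chevalley's extension theorem (proved by a Zorn's lemma argument on the poset of pairs consisting of a local subring of $L$ and its maximal ideal, ordered by domination): every local subring of a field is dominated by some valuation ring of that field. Applying this to $(S,\mathfrak{m}_S)$ yields a valuation ring $V \supseteq S$ of $L$ with $\mathfrak{m}_V \cap S = \mathfrak{m}_S$, and hence $V \cap k = R_0$, $\mathfrak{m}_V \cap k = P_0$. By lemma \ref{5.2.2}, $V$ defines a valuation $\phi : L \to G \cup \{0\}$, with $G = \phi(L^\times)$, whose restriction to $k^\times$ is equivalent to $|\cdot|$; adjusting by the order isomorphism witnessing this equivalence, we may arrange $\phi|_{k^\times} = |\cdot|$, so $|k^\times| \subseteq G$.

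Finally, I would convert $\phi$ into a real-valued absolute value. By lemma \ref{5.3.2}, the map $g \mapsto g^{n!}$ is an order monomorphism $G \hookrightarrow |k^\times| \subseteq \R_{>0}^\times$ (where $n = [L:k]$); since $G$ is torsion-free, taking the unique positive real $n!$-th root defines an order embedding $G \hookrightarrow \R_{>0}^\times$ that is the identity on $|k^\times|$. Composing with $\phi$ gives the desired non-Archimedean real-valued absolute value on $L$ extending $|\cdot|$. The main obstacle in this program is the Chevalley-style extension from $S$ to $V$: one must show that a Zorn-maximal local subring dominating $S$ is automatically a valuation ring, the key point being that for any $a \in L^\times \setminus V$ either $V[a]$ or $V[a^{-1}]$ admits a proper prime lying over $\mathfrak{m}_V$, contradicting maximality. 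The lying-over argument in step two and the $n!$-th root conversion in step four are routine by comparison once this extension principle is in hand.
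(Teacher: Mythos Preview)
Your proposal is correct and aligns with the approach the paper defers to: the paper does not actually give a proof here but refers the reader to Jacobson (Theorem 9.12), whose argument is precisely the valuation-ring route you outline---Chevalley-type extension to a dominating valuation ring of $L$, followed by the observation (your use of lemma \ref{5.3.2}) that the value group of $L$ embeds order-isomorphically into $\R_{>0}^\times$ compatibly with $|k^\times|$.

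Two minor remarks. First, the passage ``adjusting by the order isomorphism \ldots\ we may arrange $\phi|_{k^\times}=|\cdot|$, so $|k^\times|\subseteq G$'' is slightly circular as written: at that stage $G$ is still an abstract ordered group, not yet a subgroup of $\R_{>0}^\times$. The clean statement is that you have an order isomorphism $\eta:\phi(k^\times)\to|k^\times|$, and the map $g\mapsto \eta(g^{n!})^{1/n!}$ extends $\eta$ to an order embedding $G\hookrightarrow\R_{>0}^\times$; composing with $\phi$ gives the absolute value. Second, the detour through the integral closure and its localization is not strictly necessary---one may apply the Chevalley extension directly to the local pair $(R_0,P_0)$ sitting inside $L$, and the argument that $V\cap k=R_0$ goes through verbatim---but your version is certainly correct.
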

            
        \begin{proof}
        The construction is similar to that of \ref{5.2.2} containing unexcited details, which we will omit here. The reader may refer to Theorem 9.12 of \cite{jacobson2009} p.p. 580-583 for a complete proof.
        \end{proof}
    
    \section{Unramified Extension and Completely Ramified Extension}
    
    This section introduces a important construction: The unramified extension, which comes in pair with the completely ramified extension, and both of them play crucial roles in determining the Brauer group of local fields.
    
    Throughout this session, unless specified, we will use the setting below.
    
    Let $L/k$ be a field extension. Let $\phi$ be a valuation of $L$, and $S$ be its valuation ring in $L$. Denote $Q$ to be the maximal ideal of $S$. $R$ is the valuation ring for $k$ and $P$ is its maximal ideal. Then $R=S\cap k$, $P=Q\cap k$. Let $\bar{S}\defeq S/Q$, $\bar{R}\defeq R/P$. Then we have an monomorphism $\bar{R}\hookrightarrow \bar{S}$, $a+P\mapsto a+Q$. Therefore, we can consider $\bar{R}$ embeded into $\bar{S}$. Also, $\phi(k^\times)$ is a subgroup of the value group $\phi(L^\times)$.
    
    \begin{align*}
        &L\geq S\idealinv Q& \quad &\bar{S}=S/Q&\\
        &|\\
        &k\geq R\idealinv P& \quad &\bar{R}=R/P&\\
    \end{align*}
            
    \begin{definition}[ramification index and residue degree]\index{ramification index}\index{residue degree}
        In the above setting, the \emph{ramification index} of the field extension $L/k$ with respect to the valuation $\phi$ is $$e\defeq [\phi(L^\times):\phi(k^\times)].$$
        The \emph{residue degree} of $L/k$ with respect to $\phi$ is $$f\defeq [\bar{S}:\bar{R}].$$
    \end{definition}    
            
    \begin{definition}[unramified and completely ramified field extension]\index{unramified field extension}\index{completely ramified field extension}
        In the above setting, $L$ is \emph{unramified} over $k$ if the ramification index $e=1$, i.e., $\phi(L^\times)=\phi(k^\times)$, the value group is unchanged.
        
        $L$ is \emph{completely ramified} over $k$ if the residue degree $f=1$, i.e., $\bar{R}\cong \bar{S}$.
    \end{definition}        
            
    \begin{lemma}\label{5.4.3}
       In the above settings, $ef\leq n$.
    \end{lemma}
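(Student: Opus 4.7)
I will exhibit $ef$ elements of $L$ that are $k$-linearly independent, which forces $ef\leq n=[L:k]$. First choose $\bar{w}_1,\dots,\bar{w}_f\in\bar{S}$ that are $\bar{R}$-linearly independent, and lift them to $w_1,\dots,w_f\in S$. Then pick $\pi_1,\dots,\pi_e\in L^{\times}$ whose valuations $\phi(\pi_j)$ form a complete set of coset representatives for $\phi(L^{\times})/\phi(k^{\times})$. (If a priori $e$ or $f$ were infinite, the same argument applied to any finite subcollection still yields a contradiction with $n<\infty$, so both are forced to be finite.) The candidate basis of $ef$ elements is $\{w_i\pi_j:\ 1\leq i\leq f,\ 1\leq j\leq e\}$.

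The central lemma is the following: if $c=\sum_{i=1}^{f}a_iw_i$ with $a_i\in k$ not all zero, then $\phi(c)\in\phi(k^{\times})$. To prove it, pick an index $i_0$ maximizing $\phi(a_{i_0})$ in the totally ordered value group. Then $a_i/a_{i_0}\in R$ for every $i$, and the reduction $\sum_i\overline{a_i/a_{i_0}}\,\bar{w}_i\in\bar{S}$ has coefficient $\bar 1$ at $i=i_0$. By $\bar{R}$-linear independence of $\bar{w}_1,\dots,\bar{w}_f$, this reduction is nonzero in $\bar{S}$, so $\sum_i(a_i/a_{i_0})w_i\notin Q$, i.e.\ it is a unit in $S$ with $\phi$-value $1$. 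Hence $\phi(c)=\phi(a_{i_0})\in\phi(k^{\times})$.

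Now suppose $\sum_{i,j}a_{ij}w_i\pi_j=0$ with $a_{ij}\in k$ not all zero; regroup as $\sum_{j=1}^{e}c_j\pi_j=0$ where $c_j=\sum_i a_{ij}w_i$. For each $j$ with $c_j\neq 0$, the lemma yields $\phi(c_j)\in\phi(k^{\times})$, so $\phi(c_j\pi_j)$ lies in the coset $\phi(\pi_j)\phi(k^{\times})$. Because the $\phi(\pi_j)$ are in distinct cosets, the nonzero values $\phi(c_j\pi_j)$ are pairwise distinct in the totally ordered value group. The non-Archimedean strict-triangle inequality implies that whenever $\phi(x)>\phi(y)$ one has $\phi(x+y)=\phi(x)$ (from $\phi(x)=\phi((x+y)-y)\leq\max\{\phi(x+y),\phi(y)\}$), and iterating this gives $\phi\bigl(\sum_j c_j\pi_j\bigr)=\max_j\phi(c_j\pi_j)\neq 0$, contradicting $\sum_j c_j\pi_j=0$. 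Therefore every $c_j=0$, and then the reduction-mod-$Q$ step inside the lemma, applied to $\sum_i a_{ij}w_i=0$, forces every $a_{ij}=0$, the final contradiction.

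The main obstacle is the key lemma: one has to convert a $k$-linear combination of $L$-lifts into a statement about valuations in $\phi(k^{\times})$ and a residue-class independence statement simultaneously. Once that lemma is in hand, the rest is bookkeeping with the ultrametric inequality in a general totally ordered value group, which I expect to carry through cleanly.
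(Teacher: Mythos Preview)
Your proof is correct and follows essentially the same approach as the paper: both choose lifts $w_i$ of an $\bar R$-independent family in $\bar S$ and coset representatives $\pi_j$ for $\phi(L^\times)/\phi(k^\times)$, prove the identical key lemma that a nonzero $k$-combination $\sum a_iw_i$ has valuation in $\phi(k^\times)$ via dividing by the coefficient of maximal valuation and reducing mod $Q$, and then use the ultrametric inequality together with the distinct-coset condition to force each $c_j=\sum_i a_{ij}w_i$ to vanish. The only cosmetic difference is that the paper phrases the final step as ``two terms must share a valuation'' (citing the argument of Lemma~\ref{5.3.2}) whereas you phrase it as ``distinct valuations force $\phi(\sum)=\max$''; these are equivalent, and your explicit remark about the a priori finiteness of $e$ and $f$ is a small bonus the paper leaves implicit.
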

    
    \begin{proof}
        Choose $\bar{R}$-linearly independent elements of $\bar{S}$: $\{u_i+Q\}_{i=1}^{f}$, for $u_i\in S$. This means if for $a_i\in R$ such that $\sum a_i u_i\in Q$, then $a_i\in P\subseteq Q$. 
        
        Let $b_1,\cdots, b_e$ be elements in $L^\times$ such that $|b_j||k^\times|$, $1\leq j\leq e$, are distinct elements in the group $|L^\times|/|k^\times|$. 
        
        We want to show $\{u_i b_j\}$, $1\leq i\leq f$, $1\leq j \leq e$ are linearly independent over $k$. If we multiply $b_j$ by elements in $k^\times$, we can make it in $Q$ so we may assume $b_j\in Q$.
        
        First we want to show if $a_i\in k$, $\sum a_i u_i\neq 0$ then $|\sum a_i u_i|\in |k^\times|$. Indeed, if $\sum a_i u_i\neq 0$, then $a_i\neq 0$ for some $i$. We may assume $0\neq |a_1|\geq |a_i|$, for all $a_i$. Then $|\sum a_i u_i |=|a_1||\sum a_1^{-1}a_i u_i|$ and $|\sum a_1^{-1}a_i u_i|\leq 1$. If $|\sum a_1^{-1}a_i u_i|<1$ then $\sum a_1^{-1}a_i u_i\in Q$ and since $|a_1^{-1}a_i|\leq 1$, $a_1^{-1}a_i\in R$. The non-trivial relation $\sum a_1^{-1}a_i u_i\in Q$ contradict with the choice of $u_i$. Hence $|\sum a_1^{-1}a_i u_i|=1$ and $|\sum a_i u_i|=|a_1|$ $\in |k^\times|$.
        
        To prove linear independence we assume $\sum_{i,j}a_{ij}u_ib_j=0$, $a_{ij}\in k$. We want to show $\sum_i a_{ij}u_i=0$ for all $j$. Otherwise, say $\sum_i a_{ij}u_i\neq 0$ for some $j$. Then by similar argument used in $\ref{5.3.2}$, we must have a pair $(j_1,j_2)$ such that $|\sum_i a_{ij_1}u_ib_{j_1}|$ $=|\sum_i a_{ij_2}u_ib_{j_2}|\neq 0$. Then $\sum_i a_{ij_1}u_i\neq 0$, $\sum_i a_{ij_2}u_i\neq 0$. Hence $|\sum_i a_{ij_1}u_i|\in |k^\times|$, $|\sum_i a_{ij_2}u_i|\in |k^\times|$ by our first claim in last paragraph. Then $|b_{j_1}||k^\times|=|b_{j_2}||k^\times|$, contradicts to the choice of $b_j$, thus $\sum_i a_{ij}u_i=0$ for all $j$. Now if we scale $a_{ij}$ by suitable elements in $k^\times$, we may assume $\sum_i a_{ij}u_i=0$ for all $a_{ij}\in R$. If $a_{ij}\neq 0$, then there is $0\neq |a_{{i_1}j}|\geq |a_{ij}|$ for all $i$. Then $|\sum_i a_{ij}u_i|=|a_{{i_1}j}||\sum_i a_{{i_1}j}^{-1}a_{ij}u_i|$ and $|\sum_i a_{{i_1}j}^{-1}a_{ij}u_i|\leq 1$. For the same reason as last paragraph $|\sum_i a_{{i_1}j}^{-1}a_{ij}u_i|=1$ but this gives $0=|\sum_i a_{ij}u_i|=|a_{{i_1}j}|\neq 0$ a contradiction. Therefore $a_{ij}=0$. Then $\{u_ib_j\}$, $1\leq i\leq f$, $1\leq j\leq e$, are linearly independent. This implies $ef\leq n$. 
    \end{proof}
            
    \begin{theorem}\label{5.4.4}
        Let $k$ be complete with respect to a discrete valuation $|\cdot |$. Let $L/k$ be a finite field extension. Let $|\cdot |$ be the unique extension of $|\cdot |$ to $L$. Then $ef=n$, $n=[L:k]$.
    \end{theorem}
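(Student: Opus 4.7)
The plan is to establish the complementary bound $ef \geq n$, since Lemma \ref{5.4.3} already supplies $ef \leq n$ together with an explicit $k$-linearly independent family $\{u_i b_j : 1 \leq i \leq f, \ 1 \leq j \leq e\}$. I would specialise the choice of $b_j$ so as to exploit the hypothesis that $|\cdot|$ is discrete and $k$ is complete. Since the extension $|\cdot|$ to $L$ is still discrete (its value group is order-isomorphic to a subgroup of $|k^\times|$ by Lemma \ref{5.3.2}, and every subgroup of a cyclic group is cyclic), let $\Pi \in L$ be a uniformiser with $|\Pi| = c$ generating $|L^\times|$; then $|k^\times|$ has index $e$ inside $|L^\times|$, so it is generated by $c^e$, and we may pick $\pi \in k$ with $|\pi| = c^e$. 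Setting $b_j \defeq \Pi^{j-1}$ for $1 \leq j \leq e$ makes the cosets $|b_j||k^\times|$ in $|L^\times|/|k^\times|$ pairwise distinct, and we lift an $\bar R$-basis $\bar u_1, \ldots, \bar u_f$ of $\bar S$ to units $u_1, \ldots, u_f \in S$ (with $u_1 = 1$).

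The remaining task is to show that $\{u_i b_j\}$ also \emph{spans} $L$ over $k$. Every element of $L$ is a $k$-multiple of an element of $S$ (clear $|\cdot|$-denominators by a power of $\pi^{-1}$), so it is enough to spans $S$ as an $R$-module. This I would do by successive approximation. Given $0 \ne x \in S$ with $|x| = c^m$, $m \geq 0$, write $m = qe + (j-1)$ with $q \geq 0$ and $1 \leq j \leq e$; then $x/(\pi^q b_j)$ has absolute value $1$, i.e.\ it is a unit in $S$, so its residue in $\bar S$ can be expanded as $\sum_i \bar a_i \bar u_i$ with $a_i \in R$. Lifting back yields
\[
x \;=\; \sum_{i=1}^{f} a_i\, \pi^q u_i b_j \;+\; x', \qquad |x'| \leq c^{m+1}.
\]
Iterating gives a sequence $x = x_0, x_1, x_2, \ldots$ with $|x_n| \to 0$, together with a corresponding double series whose $n$th term has the form $\sum_i a_{i,n} \pi^{q_n} u_i b_{j_n}$.

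The main technical obstacle is convergence: since $j_n$ varies with $n$, the contributions to each fixed pair $(i,j)$ occur at irregular indices, and one must regroup the series by $(i,j)$ and show each sub-series $\sum_{n : j_n = j} a_{i,n} \pi^{q_n}$ converges in $R$. This is where completeness of $k$ (hence of $R$, since $R$ is closed in $k$) combined with the discreteness of $|\cdot|$ -- which forces $|\pi^{q_n}| \to 0$ geometrically along each such subsequence -- is used decisively. Once convergence is established, one obtains $x = \sum_{i,j} d_{ij} u_i b_j$ with $d_{ij} \in R$, so $\{u_i b_j\}$ spans $S$ over $R$, hence spans $L$ over $k$. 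Combined with the linear independence from Lemma \ref{5.4.3}, this gives $ef = n$.
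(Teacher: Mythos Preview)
Your proposal is correct and follows essentially the same route as the paper: choose uniformisers $\Pi$ for $L$ and $\pi$ for $k$, take $b_j = \Pi^{j-1}$, lift a residue basis to $u_i$, invoke Lemma~\ref{5.4.3} for linear independence, and prove spanning of $S$ over $R$ by successive approximation $x \mapsto x'$ with strictly decreasing valuation. The only cosmetic difference is in the convergence step: where you regroup the series by pairs $(i,j)$ and argue each sub-series converges in $R$, the paper instead forms the partial sums $\sum_{i,j} c_{ij}^{(m)} u_i \Pi^j$ and appeals directly to the coordinate-wise Cauchy argument established in the proof of Theorem~\ref{5.3.1} (a Cauchy sequence in $L$ has Cauchy coefficients with respect to any $k$-basis), which is slightly cleaner but amounts to the same thing.
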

        
    \begin{align*}
        &\Pi \quad&L&\geq S\idealinv Q& \quad &\bar{S}=S/Q&\quad e=[|L^\times|:|k^\times|]&\\
        &\quad  & |&&&&\\
        &\pi \quad&k&\geq R\idealinv P& \quad &\bar{R}=R/P&\quad f=[\bar{S}:\bar{R}]&\\
    \end{align*}
            
    \begin{proof}
        Let $P,\ Q,\ R,\ S,\ \bar{S},\ \bar{R},\ e,\ f$ as before, summarized in the above figure. Since $|\cdot |$ is discrete and $L$ is complete, choose $\pi\in P$, $\Pi\in Q$ such that $P=\pi R$, $Q=\Pi S$, $|\pi|$, $|\Pi|$ generates $|k^\times|$ and $|L^\times|$ respectively. Since $e=[|L^\times|:|k^\times|]$, we have $|\pi|=|\Pi|^e$, i.e., $\pi=u\Pi^e$ for some unit $u$ in $S$. Let $\{u_i+Q\}_{i=1}^f$ be a basis of $\bar{S}$ over $\bar{R}$, $u_i\in S$. We want to show that $$u_i\Pi^j, \quad 1\leq i\leq f, 0\leq j\leq e-1$$ form a basis of $L$ over $k$. Since $|\Pi^j||k^\times|$ are distinct elements of $|L^\times|/|k^\times|$, by \ref{5.4.3}, $u_i\Pi^j$ are linearly independent over $k$. It remains to show it is a spanning set for $L$. By multiplying elements of $k^\times$, we can scale every element of $L$ to $S$. Hence it suffices to show for every $v\in S$, it can be written in the form $v=\sum_{i,j}\alpha_{ij}u_i\Pi^j$ for $\alpha_{ij}\in R$. 
        
        Now consider $v\in S$, $|v|=|\Pi^k|$. Write $k=em+n$. $|v|=|\Pi^{em+n}|=|\pi^m\Pi^n|$. Hence $v=w\pi^m\Pi^n$ for some unit $w\in S$, $|w|=1$. By definition of $u_i$, $w+Q=\sum a_iu_i+Q$ for some $a_i\in R$. $w-\sum a_iu_i\in Q$ so $|w-\sum a_iu_i|<1$. Define $$v_1=v-(\sum_i a_iu_i)\pi^m\Pi^n=(w-\sum_i a_iu_i)\pi^m\Pi^n.$$ $|v_1|=|w-\sum_i a_iu_i||\pi^m\Pi^n|<|\pi^m\Pi^n|=|v|$. Then $v=\sum_i a_i\pi^m u_i\Pi^n+v_1=\sum_i b_iu_i\Pi^n+v_1$ where $b_i=a_i\pi^m\in R.$ Now we repeat this process and form $v_2$, $v_3$, $\cdots$ with $|v|>|v_1|>$ $|v_2|>\cdots$. Substituting into one equation gives $v=\sum_{i=1}^f \sum_{j=0}^{e-1} c_{ij}^{(m)}u_i\Pi^j+v_m$ for $m=1,2,\cdots$ and $c_{ij}^{(m)}\in R$. Then $v_m\to 0$ and $\sum_{i,j} c_{ij}^{(m)}u_i\Pi^j\to v$ as $m\to \infty$. Since $k$ is complete and $\pi_i\Pi^j$ are linearly independent, by Cauchy sequence argument in \ref{5.3.1} we know $lim_m c_{ij}^{(m)}$ exists for every $i,j$. Since $|c_{ij}^{(m)}|\leq 1$, we have its limit $|c_{ij}|\leq 1$. $c_{ij}\in R$. Hence $v=\sum c_{ij}u_i\Pi^j$ for every $c_{ij}\in R$. This proves the claim and hence proves the lemma.
    \end{proof}

    \section{Local Fields and their Finite Extensions}
        The section deals with local fields and structure of their finite extensions. We will show how the finite extensions of local fields look like and prove a existence and uniqueness theorem for unramified extensions. 
        
        To identify the structure, we need Hensel's lemma, which gives the reducibility criterion for polynomials with coefficients in a valuation ring. We will focus on the case when valuations are discrete, because local fields are.
        
        \begin{lemma}\label{5.5.1}
            Let $k$ be a field complete with respect to $|\cdot|$, $R$ be the corresponding valuation ring and $P$ its maximal ideal. $\bar{R}\defeq R/P$. Suppose $f(x)\in R[X]$ is monic, irreducible. Then its image $\bar{f}(x)\in \bar{R}[X]$ is a power of an irreducible polynomial in $\bar{R}[X]$.
        \end{lemma}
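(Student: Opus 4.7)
The plan is to argue by contradiction via Hensel's lemma, which lifts coprime factorizations of $\bar{f}$ to factorizations of $f$ in $R[X]$. The precise version I will invoke is: if $f \in R[X]$ is monic and $\bar{f} = \bar{g}_0 \bar{h}_0$ in $\bar{R}[X]$ with $\bar{g}_0, \bar{h}_0$ coprime and $\bar{g}_0$ monic, then there exist monic $G, H \in R[X]$ with $f = GH$, $\bar{G} = \bar{g}_0$, $\bar{H} = \bar{h}_0$, and $\deg G = \deg \bar{g}_0$.

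Granting this, suppose $\bar{f}$ is not a power of a single irreducible. Factor $\bar{f} = \prod_i \bar{p}_i^{e_i}$ into distinct monic irreducibles in the UFD $\bar{R}[X]$. The hypothesis forces at least two distinct $\bar{p}_i$, so I split $\bar{f} = \bar{g}_0 \bar{h}_0$ with $\bar{g}_0 \defeq \bar{p}_1^{e_1}$ and $\bar{h}_0 \defeq \prod_{i \geq 2} \bar{p}_i^{e_i}$. These are coprime (distinct monic irreducibles in a UFD over a field have gcd $1$) and both of positive degree. Hensel lifting then yields $f = GH$ in $R[X]$ with $G, H$ monic of positive degree, and since units in $R[X]$ are constants, this is a genuine non-trivial factorization, contradicting irreducibility of $f$.

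The substantive work is Hensel's lemma itself. My approach is successive approximation using completeness, exploiting the discrete valuation (which the section assumes, with uniformiser $\pi$). Start from monic lifts $G_0, H_0 \in R[X]$ of $\bar{g}_0, \bar{h}_0$ with $\deg G_0 = \deg \bar{g}_0$; then $f - G_0 H_0 \in PR[X]$. Choose $U, V \in R[X]$ lifting a Bezout identity $\bar{g}_0 \bar{u} + \bar{h}_0 \bar{v} = 1$ in the PID $\bar{R}[X]$. Inductively construct $G_n, H_n$ satisfying $f \equiv G_n H_n \pmod{P^{n+1}}$ by writing $f - G_n H_n = \pi^{n+1} r_n$ and correcting $G_{n+1} = G_n + \pi^{n+1} \delta_n$, $H_{n+1} = H_n + \pi^{n+1} \epsilon_n$, where $\delta_n, \epsilon_n$ are chosen from $r_n$ via the Bezout identity. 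Completeness of $k$ forces the coefficient sequences to converge $P$-adically, and the limits $G, H \in R[X]$ give the desired factorization.

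The main technical obstacle is degree control: I must ensure $\deg G_n = \deg \bar{g}_0$ for all $n$, so that the limit $G$ is a polynomial of the expected degree and $H = f/G$ has degree $\deg \bar{h}_0$. This is handled by performing Euclidean division by the monic $G_n$ before adding each correction, so that the chosen $\delta_n$ has degree strictly less than $\deg G_n$; this guarantees the inductive hypothesis propagates. Getting this bookkeeping right, while simultaneously maintaining the congruence $f \equiv G_n H_n \pmod{P^{n+1}}$ at each stage, is where the argument requires the most care.
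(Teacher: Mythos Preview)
Your argument is correct, but it inverts the paper's logical order. In the paper, Lemma~5.5.1 is proved \emph{first} and then used as the main ingredient in the proof of Hensel's Lemma (Theorem~\ref{Hensels}): one factors $f$ into monic irreducibles in the UFD $R[X]$, applies Lemma~5.5.1 to each factor so that its reduction is a prime power, and then partitions the factors according to which side of the coprime decomposition $\bar{\gamma}\bar{\delta}$ each prime power lands in. Your route goes the other way: you prove Hensel directly by successive approximation and then deduce 5.5.1 as an immediate corollary (coprime factorization downstairs lifts, contradicting irreducibility upstairs).

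The paper's direct proof of 5.5.1 is shorter and more conceptual: pass to a splitting field $L$, use uniqueness of the extended valuation to see that every $\sigma\in\mathrm{Aut}_k L$ preserves $S$ and $Q$, and then Galois transitivity on the roots $r_i$ forces all $\bar r_i$ to share the same minimal polynomial $\bar g$ over $\bar R$, whence $\bar f$ is a power of $\bar g$. This also identifies the irreducible $\bar g$ explicitly. Your approach costs more up front (the full approximation argument for Hensel, with the degree-control bookkeeping you flag), but it is self-contained and yields the constructive form of Hensel independently; in particular it avoids any appeal to extensions of valuations or to transitivity on roots. One minor point: you invoke a uniformiser $\pi$, which assumes discreteness; the paper's proof of 5.5.1 does not need this, though in the context of the section (and its later applications) the restriction is harmless.
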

        
        \begin{proof}
        Let $L$ be a splitting field of $f(x)$. $|\cdot|$ be the unique extension on $L$. $S$ be the valuation ring and $Q$ be the maximal ideal of $S$. Now let $a\in L$, $\sigma\in Aut_kL$. Then by section \ref{5.3}, $$|\sigma(a)|=|N_{L/k}(\sigma(a))|^{1/[L:k]}|=|N_{L/k}(a)|^{1/[L:k]}|=|a|.$$ It follows that $\sigma(S)=S$, $\sigma(Q)=Q$. Every $\sigma$ induces an automorphism of $\bar{S}=S/Q$, $\bar{\sigma}:\bar{a}\mapsto \overline{\sigma a}$. $\bar{\sigma}(\bar{R})=\bar{R}$. Hence $\bar{\sigma}\in Aut_{\bar{R}}\bar{S}$. Now since $R$ is a P.I.D., hence a U.F.D., so $f(x)$ is irreducible in $k[X]$. Let $f(x)=\prod_{i=1}^d (x-r_i)$ in $L[X]$, where $d$ is the total number of roots counting multiplicity. Now let $a_d=f(0)=\prod_{i=1}^d (-1)^d r_i$ then $N_{L/k}(a)=[(-1)^da_d]^{[L:k]/d}$. Since $a_d$ is in $R$, $|r_i|\leq 1$ for all $i$, so $r_i\in S$. Passing to the image in $\bar{S}$, we have $\bar{f}(x)=\prod_{i=1}^n(x-\bar{r}_i)$. Let $\bar{r}_i$, $\bar{r}_j$ be any two roots. Since $f(x)$ is irreducible in $k[X]$, there exist an automorphism $\sigma\in Aut_kL$ such that $\sigma r_i=r_j$. Then passing to the image $\overline{\sigma r_i}=\bar{r}_j$. This implies $\bar{r}_i$ and $\bar{r}_j$ have the same minimal polynomial over $\bar{R}$, denoted as $\bar{g}(x)$. Then $\bar{f}(x)$ is some power of $\bar{g}(x)$.
        \end{proof}
            
        \begin{theorem}[Hensel's Lemma]\label{Hensels}
            Let $k$ be complete with respect to a discrete $|\cdot|$. $R$ be the corresponding valuation ring. $P$ is the maximal ideal. $\bar{R}=R/P$. Let $f(x)\in R[X]$ be monic, and $\bar{f}(x)=\bar{\gamma}(x)\bar{\delta}(x)$ in $\bar{R}[X]$, where $\bar{\gamma}(x)$, $\bar{\delta}(x)$ are monic, and $(\bar{\gamma}(x),\bar{\delta}(x))=1$, i.e., are coprime. Then $f(x)=g(x)h(x)$ in $R[X]$ where $g(x)$, $h(x)$ are monic and $\bar{g}(x)=\bar{r}(x)$, $\bar{h}(x)=\bar{\delta}(x)$.
        \end{theorem}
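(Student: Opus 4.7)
\medskip
\textbf{Proof plan.} The plan is to lift the factorization $\bar{f} = \bar{\gamma}\bar{\delta}$ from $\bar{R}[X]$ to $R[X]$ by Newton-style successive approximation, exploiting completeness of $k$ to pass to a limit at the end. Since $|\cdot|$ is discrete, Theorem \ref{5.2.5} supplies a uniformizer $\pi$ with $P = \pi R$ and $P^n = \pi^n R$. Set $s \defeq \deg \bar{\gamma}$ and $t \defeq \deg \bar{\delta}$, so $\deg f = s+t$. I will construct monic polynomials $g_n, h_n \in R[X]$ of degrees $s$ and $t$ respectively, such that $f \equiv g_n h_n \pmod{\pi^n R[X]}$, $g_{n+1} \equiv g_n$ and $h_{n+1} \equiv h_n \pmod{\pi^n}$, and $\bar{g}_n = \bar{\gamma}$, $\bar{h}_n = \bar{\delta}$ for every $n \geq 1$. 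The coefficient-wise limits will then provide the required factors.

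For the base case, pick any monic lifts $g_1, h_1$ of $\bar{\gamma}, \bar{\delta}$ by lifting coefficients; then $f - g_1 h_1 \in \pi R[X]$ automatically, and being the difference of two monic polynomials of the same degree $s+t$, it has degree strictly less than $s+t$. Since $\bar{R}$ is a field and $(\bar{\gamma}, \bar{\delta})=1$, Bezout in the principal ideal domain $\bar{R}[X]$ yields $\bar{a}, \bar{b} \in \bar{R}[X]$ with $\bar{a}\bar{\gamma} + \bar{b}\bar{\delta} = 1$; fix these once and for all.

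For the inductive step, suppose $g_n, h_n$ have been built, and write $f - g_n h_n = \pi^n r_n$ with $\deg r_n < s+t$. I seek $g_{n+1} = g_n + \pi^n u$ and $h_{n+1} = h_n + \pi^n v$ with $u, v \in R[X]$, $\deg u < s$, $\deg v < t$, which preserves both monicity and the prescribed degrees. Expansion gives
\begin{equation*}
    g_{n+1} h_{n+1} - g_n h_n = \pi^n(u h_n + v g_n) + \pi^{2n} u v,
\end{equation*}
and since $2n \geq n+1$, the quadratic term lies in $\pi^{n+1} R[X]$. Hence condition at level $n+1$ reduces to finding the required $\bar{u}, \bar{v}$ in $\bar{R}[X]$ with $\bar{u}\bar{\delta} + \bar{v}\bar{\gamma} = \bar{r}_n$. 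Multiplying the Bezout identity by $\bar{r}_n$ gives $\bar{a}\bar{r}_n\bar{\gamma} + \bar{b}\bar{r}_n\bar{\delta} = \bar{r}_n$; dividing $\bar{a}\bar{r}_n$ by $\bar{\delta}$ with remainder $\bar{v}$ of degree $<t$ and absorbing the quotient produces $\bar{u} = \bar{q}\bar{\gamma} + \bar{b}\bar{r}_n$ with $\bar{u}\bar{\delta} + \bar{v}\bar{\gamma} = \bar{r}_n$. Since $\deg \bar{r}_n < s+t$ and $\deg(\bar{v}\bar{\gamma}) < s+t$, necessarily $\deg(\bar{u}\bar{\delta}) < s+t$, forcing $\deg \bar{u} < s$. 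Lift $\bar{u}, \bar{v}$ coefficient-by-coefficient to $u, v \in R[X]$ of the same degrees.

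Finally, the coefficient-wise differences $g_{n+1} - g_n$ and $h_{n+1} - h_n$ lie in $\pi^n R$, so each coefficient of the sequences $\{g_n\}, \{h_n\}$ is Cauchy in $R$. Completeness of $k$ together with the fact that $R$ is closed in $k$ (being the unit ball of the non-Archimedean absolute value) yields limits $g, h \in R[X]$, monic of degrees $s$ and $t$, with $\bar{g} = \bar{\gamma}$, $\bar{h} = \bar{\delta}$; and since $g_n h_n \to gh$ coefficient-wise while $g_n h_n \to f$ coefficient-wise, one obtains $gh = f$. The main obstacle I anticipate is the degree control in the induction step: without forcing $\deg u < s$ and $\deg v < t$, the degrees of $g_n, h_n$ could drift upward at each stage and the coefficient-wise limit would not land in the right place. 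The coprimality of $\bar{\gamma}, \bar{\delta}$ is used precisely here, since it is what allows the degree-controlled decomposition of $\bar{r}_n$ via Bezout plus polynomial division in $\bar{R}[X]$.
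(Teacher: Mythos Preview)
Your successive-approximation argument is correct and is the classical proof of Hensel's lemma: the degree control via Bezout plus Euclidean division in $\bar R[X]$ is handled properly, and the passage to the limit is justified since $R$ is closed in $k$.

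The paper, however, takes a genuinely different route. Rather than lifting the factorization iteratively, it first factors $f(x)=\prod_i f_i(x)^{e_i}$ into monic irreducibles in $R[X]$ (using that $R$ is a local P.I.D.\ by Theorem~\ref{5.2.5}), and then invokes the preceding Lemma~\ref{5.5.1}, which says that each $\bar f_i(x)$ is a power of a single irreducible $\bar g_i(x)$ in $\bar R[X]$. Coprimality of $\bar\gamma$ and $\bar\delta$ then forces each $\bar g_i$ to divide exactly one of $\bar\gamma,\bar\delta$, so one simply groups the $f_i$ accordingly to produce $g$ and $h$. In the paper's approach completeness is used not for a limiting process but indirectly, inside Lemma~\ref{5.5.1}, via the uniqueness of the extended absolute value on a splitting field (Theorem~\ref{5.3.1}) and the norm formula $|a|=|N_{L/k}(a)|^{1/[L:k]}$. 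Your proof is more elementary and self-contained, needing nothing about field extensions or norms; the paper's proof is shorter once Lemma~\ref{5.5.1} is in hand and has the conceptual bonus of identifying $g$ and $h$ explicitly as products of the irreducible factors of $f$ itself.
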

        
        \begin{proof}
        Factorize $f(x)$ into monic, irreducible factors $f_i(x)$: $f(x)=\prod_{i=1}^s f_i(x)^{e_i}$. By last lemma \ref{5.5.1}, $\bar{f}_i(x)=\bar{g}_i(x)^{k_i}$ for some monic, irreducible $\bar{g}_i(x)\in \bar{R}(x)$. $\bar{f}(x)=\prod_{i=1}^s \bar{g}_i(x)^{k_ie_i}$. Since         $\bar{f}(x)=\bar{\gamma}(x)\bar{\delta}(x)$ for coprime $\bar{\gamma}(x)$, $\bar{\delta}(x)$. We may assume $\bar{\gamma}(x)=\prod_{i=1}^r \bar{g}_i(x)^{k_ie_i}$, $\bar{\delta}(x)=\prod_{j=r+1}^{s} \bar{g}_j(x)^{k_je_j}$, where $\bar{g}_i(x)\neq \bar{g}_j(x)$ for $i,j$ in the indicated range. We may put $g(x)=\prod_{i=1}^r f_i(x)^{e_i}$, $h(x)=\prod_{j=r+1}^s f_j(x)^{e_j}$ giving the result.
        \end{proof}
            
        \begin{corollary}\label{5.5.3}
            In the settings above, if $\bar{f}(x)$ has a simple root $\bar{\alpha}$ in $\bar{R}[X]$, then it lifts to a root $r$ in $R$ such that $\bar{r}=\bar{\alpha}$.
        \end{corollary}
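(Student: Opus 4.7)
The plan is to apply Hensel's Lemma \ref{Hensels} directly with a particularly simple choice of factorization mod $P$. Since $\bar{\alpha}$ is a simple root of $\bar{f}(x)$, I would first divide $\bar{f}(x)$ by $(x-\bar{\alpha})$ in $\bar{R}[X]$ to obtain a factorization $\bar{f}(x)=(x-\bar{\alpha})\bar{\delta}(x)$ with $\bar{\delta}(x)\in\bar{R}[X]$ monic, and use simplicity to conclude $\bar{\delta}(\bar{\alpha})\neq 0$.

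The next step is to check that the hypotheses of Hensel's Lemma are met for this factorization. Setting $\bar{\gamma}(x)\defeq x-\bar{\alpha}$, both factors are monic, and they are coprime in $\bar{R}[X]$: indeed $\bar{\gamma}(x)$ is linear (hence a unit times an irreducible), and $\bar{\gamma}(x)\nmid\bar{\delta}(x)$ precisely because $\bar{\delta}(\bar{\alpha})\neq 0$. Hence $\gcd(\bar{\gamma}(x),\bar{\delta}(x))=1$ in the P.I.D.\ $\bar{R}[X]$ (recall $\bar{R}$ is a field since $P$ is maximal in the local ring $R$).

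Now I invoke Hensel's Lemma to obtain monic $g(x),h(x)\in R[X]$ with $f(x)=g(x)h(x)$, $\bar{g}(x)=x-\bar{\alpha}$ and $\bar{h}(x)=\bar{\delta}(x)$. Since $g$ is monic with $\deg g=\deg\bar{g}=1$, I can write $g(x)=x-r$ for a unique $r\in R$. Reducing modulo $P$ yields $x-\bar{r}=\bar{g}(x)=x-\bar{\alpha}$, so $\bar{r}=\bar{\alpha}$, and substituting $r$ into $f$ gives $f(r)=g(r)h(r)=0\cdot h(r)=0$, so $r$ is the desired root.

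There is essentially no obstacle here: the only nontrivial point is verifying that simplicity of the root $\bar{\alpha}$ is exactly what delivers the coprimality hypothesis of Hensel's Lemma, after which the linearity of one factor forces it to yield a root in $R$ by matching degrees.
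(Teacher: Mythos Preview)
Your proof is correct and follows essentially the same approach as the paper: factor out $(x-\bar{\alpha})$ from $\bar{f}(x)$, use simplicity of the root to obtain coprimality of the two factors, and then invoke Hensel's Lemma. The paper's version is terser and leaves the extraction of $r$ from the degree-one factor $g(x)$ implicit, whereas you spell out this last step.
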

    
        \begin{proof}
            If $\bar{f}(x)=(x-\bar{\alpha})\bar{g}(x)$, $\bar{g}(x)\neq 0$, so $((x-\bar{\alpha}),\bar{g}(x))=1$ and we can apply Hensel's lemma and get the result.
        \end{proof}
            
        \begin{definition}[local field]\index{local field}
            A field $k$, together with absolute value $|\cdot|$ is called a \emph{local field} if it satisfies the following conditions:
            \begin{enumerate}[(i)]
                \item $|\cdot|$ is non-Archimedean, discrete, and non-trivial;
                \item $k$ is complete with respect to $|\cdot|$;
                \item The residue field of $|\cdot|$ is finite.
            \end{enumerate}
        \end{definition}
        
        \begin{remark}
            \begin{enumerate}[(i)]
                \item A typical example of local field is $(\Q_p, |\cdot|_p)$. Its valuation ring is $\Z_p$, the ring of p-adic integers. The maximal ideal is $p\Z_p$. The residue field is $\Z/p\Z$. Every element of $x\in \Q_p$ can be written uniquely in the form 
                $$x=\sum_{n=-N}^\infty a_np^n\quad for\ a_n\in \Z/p\Z,\ and\ some\ N\in \Z.$$
                Every element of $x\in \Z_p$ is uniquely written as $$x=\sum_{n=0}^\infty a_np^n,\quad a_n\in \Z/p\Z.$$
                Then $|x|_p=p^{-min\{n: a_n\neq 0\}}$;
                \item Conditions $(i)-(iii)$ carry over to finite extension of local fields by \ref{5.3}, \ref{5.3.2}, hence the finite extension of local field is local.
                
            \end{enumerate}
        \end{remark}
    
        \begin{lemma}\label{5.5.5}
            Let $k$ be a local field. $k\geq R\idealinv P$, $\bar{R}=R/P$ as before. Let $n_p=|\bar{R}|$. Then $R$ contains set $\Lambda_k$ of $n_p$ distinct roots to the equation $x^{n_p}=x$ and there is a group isomorphism $\Lambda_k^\times\cong \bar{R}^\times$.
        \end{lemma}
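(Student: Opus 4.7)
The plan is to apply Hensel's lemma (in its simple-root form, Corollary \ref{5.5.3}) to the polynomial $f(x) = x^{n_p} - x$. Since $\bar{R}$ is a finite field with $n_p$ elements, every element of $\bar{R}$ is a root of $\bar{f}(x) = x^{n_p} - x$ in $\bar{R}[X]$ (this is the standard fact that $\bar{R}^\times$ has order $n_p - 1$, so $\bar{a}^{n_p - 1} = 1$ for $\bar{a} \neq 0$, and trivially $0^{n_p} = 0$). Hence $\bar{f}(x)$ splits completely as $\prod_{\bar{a} \in \bar{R}}(x-\bar{a})$, giving $n_p$ roots.

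Next I would verify that each of these roots is simple. Since $\bar{R}$ is a finite field, $n_p = p^s$ for some prime $p = \mathrm{char}(\bar{R})$ and some $s \geq 1$. Thus in $\bar{R}[X]$ we have $\bar{f}'(x) = n_p x^{n_p - 1} - 1 = -1$, which is a nonzero constant, so $\gcd(\bar{f}, \bar{f}') = 1$ and every root of $\bar{f}$ is simple. By Corollary \ref{5.5.3}, each $\bar{a} \in \bar{R}$ lifts to a root $r_{\bar{a}} \in R$ of $f(x) = x^{n_p} - x$ with $\overline{r_{\bar{a}}} = \bar{a}$. Since distinct $\bar{a}$ produce distinct reductions, the set $\Lambda_k \defeq \{r_{\bar{a}} : \bar{a} \in \bar{R}\}$ consists of $n_p$ distinct elements of $R$, proving the first assertion.

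For the group statement, put $\Lambda_k^\times \defeq \Lambda_k \setminus \{0\}$; this is the preimage of $\bar{R}^\times$ under the reduction map, and I claim it is a subgroup of $k^\times$. Closure under multiplication follows from $(r_1 r_2)^{n_p} = r_1^{n_p} r_2^{n_p} = r_1 r_2$. For inverses, if $r \in \Lambda_k^\times$ then $r \neq 0$ and $r^{n_p} = r$ gives $r^{n_p - 1} = 1$, so $r^{-1} = r^{n_p - 2} \in R$, and $(r^{-1})^{n_p} = r^{-1}$ places $r^{-1}$ in $\Lambda_k^\times$. The identity $1$ is clearly in $\Lambda_k^\times$ since $1^{n_p} = 1$. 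Finally, the reduction map $\phi: \Lambda_k^\times \to \bar{R}^\times$, $r \mapsto \bar{r}$, is a group homomorphism (reduction modulo $P$ is a ring homomorphism on $R$), and it is bijective by the construction of $\Lambda_k$ via Hensel lifting. Hence $\Lambda_k^\times \cong \bar{R}^\times$.

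The only real subtlety is checking the simple-root hypothesis of Hensel's lemma; the rest is bookkeeping with the reduction map. I do not foresee any genuine obstacle, as the finiteness and positive characteristic of $\bar{R}$ make the derivative computation immediate.
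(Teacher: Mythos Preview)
Your proposal is correct and follows essentially the same approach as the paper: both arguments lift the $n_p$ simple roots of $x^{n_p}-x$ from $\bar{R}$ to $R$ via Corollary~\ref{5.5.3}, then observe that the reduction map gives the desired group isomorphism. You supply slightly more detail (the derivative computation showing simplicity, and the explicit verification that $\Lambda_k^\times$ is closed under products and inverses), but the underlying strategy is identical.
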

    
        \begin{proof}
            Let $n_p=|\bar{R}|$ and $q=char(\bar{R})$. Let $\Z/q\Z$ be its prime field. $n_p$ is some power of $q$ and $\bar{R}$ is the splitting field of $x^{n_p}-x$ over $\Z/q\Z$ (c.f. p.287 \cite{BAI}). Now pick any $\bar{\xi}_0\in \bar{R}$. It is a simple root of $f(x)=x^{n_p}-x$. By \ref{5.5.3}, there exist $\xi_0 \in R$ such that $\xi_0+P=\bar{\xi}_0$, $f(\xi_0)=0$. Now if we have $\bar{\xi}_1\neq \bar{\xi}_0\in \bar{R}$, then we have $\xi_1\in R$ with $f(\xi_1)=0$ but $\xi_1+P=\bar{\xi}_1\neq \bar{\xi}_0=\xi_0+P$. Hence we obtain $n_p$ elements $\xi_i\in \Lambda_k$ and $\xi_i+P\in \bar{R}$ are distict. Now it is easy to verify that $(\xi_i\xi_j)^{n_p}=\xi_i\xi_j$, and $\xi_i\to \xi_i+P$ is a group isomorphism $\Lambda_k^\times\cong \bar{R}^\times$.
        \end{proof}
    
        \begin{lemma}\label{5.5.6}
            Any finite extensions of finite fields are (Galois) cyclic extension.
        \end{lemma}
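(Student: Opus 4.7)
The plan is to write a finite extension of finite fields as the splitting field of $x^{q^n}-x$ over $\F_q$ and then exhibit an explicit generator of the Galois group, namely the Frobenius.

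First I would set up the notation. Let $k=\F_q$ be the base finite field, where $q=p^s$ for some prime $p$, and let $L/k$ be a finite extension of degree $n$, so $|L|=q^n$. Every element of $L^\times$ has order dividing $q^n-1$, and $0$ satisfies $x^{q^n}=x$ as well, hence every element of $L$ is a root of $f(x)=x^{q^n}-x\in k[X]$. Since $f$ has degree $q^n=|L|$, the field $L$ is precisely the set of roots of $f$ in any splitting field, so $L$ is a splitting field of $f$ over $k$. The derivative $f'(x)=-1$ is coprime to $f$, so $f$ is separable and $L/k$ is a separable, normal, hence Galois extension of degree $n$.

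Next I would exhibit the generator. Consider the Frobenius $\sigma:L\to L$, $a\mapsto a^q$. This is a $k$-algebra homomorphism because $k=\F_q$ is fixed by $x\mapsto x^q$ (every $a\in \F_q$ satisfies $a^q=a$) and because $(a+b)^q=a^q+b^q$ in characteristic $p$; it is injective (so bijective on the finite set $L$), hence $\sigma\in \mathrm{Gal}(L/k)$. The fixed field of the subgroup $\langle\sigma\rangle$ is the set of $a\in L$ with $a^q=a$, which is exactly $\F_q=k$. By the Galois correspondence, $\langle\sigma\rangle=\mathrm{Gal}(L/k)$, so the Galois group is cyclic, generated by the Frobenius $\sigma$.

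The only mild obstacle is verifying that $L$ really is captured as a splitting field of $x^{q^n}-x$ over $k$; this rests on the fact that $L^\times$ is cyclic of order $q^n-1$ (a standard consequence of the fact that finite subgroups of the multiplicative group of a field are cyclic), which is already assumed as classical Galois/field-theoretic background in the paper. Everything else is a direct appeal to the Galois correspondence, so no further substantial work is required.
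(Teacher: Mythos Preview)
Your proof is correct and follows essentially the same approach as the paper: identify $L$ as the splitting field of $x^{q^n}-x$, conclude that $L/k$ is Galois, and exhibit the Frobenius $\sigma:a\mapsto a^q$ as a generator of the Galois group. The only difference is in the final step: the paper shows directly that $\sigma$ has order $n$ by observing that $\sigma^r=\mathrm{Id}$ forces every element of $L$ to be a root of $x^{q^r}-x$, hence $q^n\leq q^r$; you instead compute the fixed field of $\langle\sigma\rangle$ as $\{a:a^q=a\}=\F_q$ and invoke the Galois correspondence. Both arguments are standard and equally short; yours is marginally slicker but relies on the correspondence, while the paper's root-counting is slightly more self-contained.
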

        
        \begin{proof}
            Let $k=\F_p$ be finite field of $p$ elements. Any finite extension is a finite field containing $k$, say $\F_{p^n}$ for some $n$. Since $\F_{p^n}$ is the splitting field of $x^{p^n}-x$ and $\F_{p}$ is perfect, $\F_{p^n}$ is Galois, with $|\F_{p^n}:\F_p|=|Gal(\F_{p^n}/\F_p)|=n$. Now consider the Frobenius automorphism $\sigma: \F_{p^n}\to \F_{p^n}$, $a\mapsto a^p$. Since $a^p-a=0$ for all $a\in \F_p$, $\sigma$ fixes $\F_p$. $\sigma$ is a bijection because $\F_{p^n}$ is finite, and a field. Therefore, $\sigma\in Gal(\F_{p^n}/\F_p)$. We now want to show $\sigma$ has order $n$.
            
            Let $\sigma^r:a\mapsto a^{p^r}$. If $\sigma^r=Id\in Gal(\F_{p^n}/\F_p)$, then $a_{p^r}=a$ for all $a\in \F_{p^n}$. But the polynomial $a^{p^r}-a$ of degree $p^r$ can have at most $p^r$ roots in $\F_{p^n}$. This forces $p^n\leq p^r$ and $n\leq r$. Hence $\sigma$ has order $n$ and generate $Gal(\F_{p^n}/\F_p)$.
        \end{proof}
    
        We will use this setting until the end of the section to determine the structure of finite dimensional extension fields of local field.
        
        Let $L/k$ be a finite dimensional local field $(k,|\cdot|)$, so $L$ is local. $|\cdot|$ is the given evaluation of the local field and extended unqiuely to the evaluation on $L$ (section \ref{5.3}). Let $e,f$ be the ramification index and the residue degree of $L/k$ with respect to $|\cdot|$. Let $R$ be the valuation ring of $k$. $P$ be the maximal ideal of $R$. $S$ is the valuation ring of $L$, and $Q$ is the maximal ideal of $L$. $\bar{S}\defeq S/Q$, $\bar{R}=R/P$. 
    
        \begin{align*}
       &(L,|\cdot|)\geq S\idealinv Q& \quad &\bar{S}=S/Q&\quad e=[|L^\times|:|k^\times|]&\\
        & |&&&\\
       &(k,|\cdot|) \geq R\idealinv P& \quad &\bar{R}=R/P&\quad f=[\bar{S}:\bar{R}]&\\
        \end{align*}
    
        Now we want to show $L/k$ is built up in two steps: $L\supseteq W\supseteq k$ where $W$ is unramified over $k$ and $L$ is completely ramified over $W$.
    
        \begin{theorem}\label{5.5.7}
            In the above setting, $k$ is a local field. Let $n_p=|\bar{R}|$ and $n_q=|\bar{S}|=n_p^f$. Let $\Lambda_k$ and $\Lambda_L$ be the set of roots of $x^{n_p}-x$, $x^{n_q}-x$. Let $W=k[\Lambda_L]$. Then $[W:k]=f$ and $W/k$ is unramified.
        \end{theorem}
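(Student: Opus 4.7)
The plan is to realize $W$ as a simple extension $k(\zeta)$ of degree exactly $f$, where $\zeta \in \Lambda_L$ is the Hensel lift of a primitive element of $\bar{S}$ over $\bar{R}$, and then bootstrap to show this single element already generates all of $\Lambda_L$ over $k$.

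First I would choose a generator $\bar{\zeta}$ of the cyclic group $\bar{S}^\times$ (which exists since $\bar{S}$ is a finite field, hence $\bar{R}(\bar{\zeta}) = \bar{S}$), and let $\bar{g}(x) \in \bar{R}[x]$ be its monic minimal polynomial, which has degree $f$ by Lemma \ref{5.5.6}. Since $\bar{\zeta}^{n_q} = \bar{\zeta}$, $\bar{g}$ divides $x^{n_q} - x$ in $\bar{R}[x]$, and because $x^{n_q} - x$ splits into $n_q$ distinct linear factors over $\bar{S}$, the cofactor $\bar{h}(x) = (x^{n_q}-x)/\bar{g}(x)$ is coprime to $\bar{g}$ in $\bar{R}[x]$. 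Hensel's Lemma (\ref{Hensels}) then lifts this to a factorization $x^{n_q} - x = g(x)h(x)$ in $R[x]$ with $g$ monic of degree $f$ and $\bar{g}$ as reduction. A monic factorization of $g$ in $R[x]$ would reduce to a factorization of $\bar{g}$, so $g$ is irreducible in $R[x]$, hence in $k[x]$ by Gauss's lemma.

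Next I would produce a root $\zeta$ of $g$ inside $\Lambda_L$. By Corollary \ref{5.5.3} applied to the simple root $\bar{\zeta}$ of $x^{n_q} - x$ in $\bar{S}$, there is a unique $\zeta \in S$ with $\zeta^{n_q} = \zeta$ and $\bar{\zeta} = \zeta + Q$; by construction of $\Lambda_L$ in \ref{5.5.5}, this $\zeta$ lies in $\Lambda_L$. The same $\zeta$ is a root of $g$: $g$ splits in $L$ into $f$ distinct linear factors (as a divisor of $x^{n_q}-x$), and its unique root lifting $\bar{\zeta}$ (Hensel applied to $g$ using the simple root $\bar{\zeta}$ of $\bar{g}$) coincides with our $\zeta$ by the uniqueness clause of \ref{5.5.3} applied in $S$. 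Since $g$ is the minimal polynomial of $\zeta$ over $k$, $[k(\zeta):k] = f$, and clearly $k(\zeta) \subseteq W$.

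To establish $W \subseteq k(\zeta)$ I would pick any $\eta \in \Lambda_L$ and show $\eta \in k(\zeta)$. The field $k(\zeta)$ is complete (Theorem \ref{5.3.1}), and its residue field, a subfield of $\bar{S}$ containing $\bar{R}(\bar{\zeta}) = \bar{S}$, equals $\bar{S}$. Applying Corollary \ref{5.5.3} to $x^{n_q}-x$ and the simple root $\bar{\eta}$ over the valuation ring of $k(\zeta)$ produces an element $\eta' \in k(\zeta)$ with $(\eta')^{n_q} = \eta'$ and $\eta' + Q = \bar{\eta}$. By the bijection $\Lambda_L \leftrightarrow \bar{S}$ from \ref{5.5.5} applied in $L$, $\eta' = \eta$, so $\eta \in k(\zeta)$. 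Hence $W = k(\zeta)$ and $[W:k] = f$.

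Finally, the residue degree of $W/k$ is $[\bar{S}:\bar{R}] = f$ by the computation above, so Lemma \ref{5.4.3} gives $e_W \cdot f \leq [W:k] = f$, forcing $e_W = 1$ and establishing that $W/k$ is unramified. The main obstacle is the identification step in the second paragraph: ensuring that the two Hensel lifts (one via $g$, one via $x^{n_q} - x$) produce the same element, which is resolved by invoking uniqueness of the simple-root lift in the common valuation ring $S$.
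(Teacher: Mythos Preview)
Your proof is correct and follows essentially the same strategy as the paper: lift the minimal polynomial $\bar g$ of a generator of $\bar S^\times$ via Hensel's lemma to an irreducible $g\in R[x]$ of degree $f$, identify a root $\zeta\in\Lambda_L$, and conclude $[W:k]=f$ with residue degree $f$, hence $e_W=1$.

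Two places where the paper is more economical are worth noting. First, to show $g(\zeta)=0$ the paper avoids any appeal to uniqueness of Hensel lifts (which Corollary~\ref{5.5.3} does not actually state): since $\zeta^{n_q}-\zeta=0$ and $x^{n_q}-x=g(x)h(x)$, if $g(\zeta)\neq 0$ then $h(\zeta)=0$, whence $\bar h(\bar\zeta)=0$; but $\bar g(\bar\zeta)=0$ too, contradicting separability of $x^{n_q}-x$. Second, your third paragraph (showing $\Lambda_L\subseteq k(\zeta)$ by running Hensel inside $k(\zeta)$) is unnecessary: the isomorphism $\Lambda_L^\times\cong\bar S^\times$ from Lemma~\ref{5.5.5} already tells you $\Lambda_L^\times$ is cyclic, and your $\zeta$ generates it (since $\bar\zeta$ generates $\bar S^\times$), so $\Lambda_L=\{0\}\cup\langle\zeta\rangle\subseteq k(\zeta)$ immediately. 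Your argument is not wrong, just longer than needed.
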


        \begin{proof}
            By \ref{5.5.5}, there is an isomorphism $\Lambda_L^\times\to \bar{S}^\times$, $\xi\mapsto \bar{\xi}\defeq \xi+Q$. If $\xi$ is a primitive $(n_q-1)$-root of unity in $L$, then $\bar{\xi}$ is a primitive $(n_q-1)$-root of unity in $\bar{S}$. Since $\bar{S}$ is a finite field, $\bar{S}^\times$ is cyclic, hence $\Lambda_L^\times$ is cyclic, generated by $\bar{\xi}$, $\xi$ respectively. Therefore, $W=k(\xi)$, $\bar{S}=\bar{R}(\bar{\xi})$. Let $\bar{g}(x)$ be the minimal polynomial of $\bar{\xi}$ in $\bar{R}[X]$. Hence $deg( \bar{g})=[\bar{S}:\bar{R}]=f$. Moreover, $\bar{\xi}$ is a root of $x^{n_q}-x$. Hence we can factorize $x^{n_q}-x=\bar{g}(x)\bar{h}(x)$ in $\bar{R}[X]$ for some $\bar{h}(x)$. By Hensel's lemma \ref{Hensels}, they lift to $x^{n_q}-x=g(x)h(x)$ in $R[X]$ where $g(x)$, $h(x)$ is monic, with images in $\bar{R}[X]$ coincide with $\bar{g}(x)$, $\bar{h}(x)$. If $g(\xi)\neq 0$ then $h(\xi)=0$, $\bar{h}_0(\bar{\xi})=0$ but $\bar{\xi}$ is also a root of $\bar{g}(x)$. This constradicts with the fact that $x^{n_q}-x$ is separable. Hence $g(\xi)=0$. Since $\bar{g}(x)$ is irreducible in $\bar{R}[X]$, $g(x)$ is irreducible in $R[X]$. Hence $g(x)$ is irreducible in $k[X]$. Now $g(x)$ is the minimal polynomial of $\xi$ in $k[X]$. $[W:k]=deg(g)=f$. Since $\Lambda_L\subseteq W$, by the isomorphism $\Lambda_L^\times\cong \bar{S}^\times$ we get that the residue field of $W$ relative to the valuation is isomorphic to $\bar{S}$. Therefore the residue degree of $W/k$  is $f$. Therefore the ramification index is $1$ and $W$ is unramified.
        \end{proof}
    
        \begin{theorem}\label{5.5.8}
        In the setting of \ref{5.5.7}, suppose $T$ is the valuation ring of $W$. $O$ is its maximal ideal. Then $Gal(W/k)\cong Gal(\bar{T}/\bar{R})$ via the map $\sigma \mapsto \bar{\sigma}$, where $\bar{\sigma}: \bar{a}\mapsto \overline{\sigma a}$. Moreover, $Gal(W/k)$ is cyclic.
        \end{theorem}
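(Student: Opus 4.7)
The plan is to build the map, verify it is a well-defined injective homomorphism, and then finish by a dimension count combined with Lemma \ref{5.5.6}.

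First I would show $W/k$ is Galois. Recall from the proof of \ref{5.5.7} that $W=k(\xi)$ where $\xi$ is a primitive $(n_q-1)$-th root of unity and its minimal polynomial $g(x)\in R[X]$ divides $x^{n_q}-x$. Since $\Lambda_L\subseteq W$ contains every root of $x^{n_q}-x$, all roots of $g$ lie in $W$, so $W/k$ is normal; and $g\mid x^{n_q}-x$ is separable, so $W/k$ is separable. Thus $|Gal(W/k)|=[W:k]=f$.

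Next I would construct the map. As in the proof of \ref{5.5.1}, any $\sigma\in Gal(W/k)$ preserves the absolute value by the formula $|\sigma(a)|=|N_{W/k}(\sigma a)|^{1/[W:k]}=|N_{W/k}(a)|^{1/[W:k]}=|a|$, so $\sigma(T)=T$ and $\sigma(O)=O$. Hence $\sigma$ descends to $\bar\sigma:\bar a\mapsto\overline{\sigma a}$, an automorphism of $\bar T$ fixing $\bar R$ pointwise. Because $\bar T/\bar R$ is a finite extension of finite fields, it is Galois of degree $f$ by \ref{5.5.6}, so $\bar\sigma\in Gal(\bar T/\bar R)$. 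The assignment $\sigma\mapsto\bar\sigma$ is evidently a group homomorphism.

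For injectivity I would use $\xi$ as a witness. Any $\sigma\in Gal(W/k)$ is determined by $\sigma(\xi)$, and $\sigma(\xi)$ is one of the $f$ roots of $g$ in $\Lambda_L$. Reduction modulo $Q$ maps these $f$ roots to the $f$ distinct roots of $\bar g$ in $\bar T$ (distinct because $\bar g\mid x^{n_q}-x$, which is separable, and by the one-to-one correspondence $\Lambda_L^\times\cong\bar S^\times$ from \ref{5.5.5}). Thus $\bar\sigma(\bar\xi)=\overline{\sigma(\xi)}$ determines $\sigma(\xi)$, and hence $\sigma$. So the homomorphism is injective. Since both $Gal(W/k)$ and $Gal(\bar T/\bar R)$ have order $f$, injectivity forces it to be an isomorphism. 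Finally, $Gal(\bar T/\bar R)$ is cyclic by \ref{5.5.6}, so $Gal(W/k)$ is cyclic as well.

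The main obstacle I anticipate is the bookkeeping around injectivity: one must ensure that the primitive root $\xi$ really does witness all of $Gal(W/k)$ and that the reduction map on the roots of $g$ is injective. Both points reduce to the separability of $x^{n_q}-x$ together with the explicit isomorphism $\Lambda_L^\times\cong\bar S^\times$ from \ref{5.5.5}, which was the content of the previous theorem; once that is invoked, the rest is a clean consequence of $[W:k]=[\bar T:\bar R]=f$.
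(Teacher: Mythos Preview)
Your argument is correct and follows essentially the same route as the paper: establish that $W/k$ is Galois via the separable polynomial $x^{n_q}-x$, build $\sigma\mapsto\bar\sigma$ using $\sigma(T)=T$, $\sigma(O)=O$ from the norm formula, prove injectivity via the bijection $\Lambda_L^\times\cong\bar S^\times$, and conclude by the cardinality match plus Lemma~\ref{5.5.6}. One small point: when you assert that $\bar T/\bar R$ has degree $f$, that is not from \ref{5.5.6} but from the end of the proof of \ref{5.5.7}, where it is shown that the residue field of $W$ is (isomorphic to) $\bar S$; the paper invokes this explicitly, and you should too.
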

        
        \begin{proof}
            Now $W$ is the splitting field of $x^{n_q}-x$ over $k$, which is separable, so $W/k$ is Galois. In the proof of \ref{5.5.1}, we see that every $\sigma\in Gal(W/k)$ induces a map $\bar{\sigma}:\bar{a}\mapsto \overline{\sigma a}$, $a\in \Lambda_L \in S\cap W=T$ by \ref{5.5.5}. Indeed, $\sigma(T)=T$, $\sigma(O)=O$ (\ref{5.5.1}). Therefore $\bar{\sigma}\in Gal(\bar{T}/\bar{R})$. Since the map $a\mapsto \bar{a}$ of $\Lambda_L^\times$ is injective, and $W=k[\Lambda_L]$, $\sigma\mapsto \bar{\sigma}$ is a monomorphism. Since $$|Gal(\bar{T}/\bar{R})|=[\bar{T}:\bar{R}]=[\bar{S}:\bar{R}]=f=[W:k]=|Gal(W/k)|,$$  following from $\bar{T}\cong \bar{S}$ as in the proof of \ref{5.5.7}, this map is an isomorphism. $Gal(W/k)$ is a cyclic group because $Gal(\bar{T}/\bar{R})$ is cyclic by \ref{5.5.6}.
        \end{proof}
    
        Now we have $W/k$ unramified, $|W^\times|=|k^\times|$, and $$[|L^\times|:|W^\times|]=e=n/f=[L:k]/[W:k]=[L:W].$$ $L$ is completely ramified over $W$.
        
        \begin{definition}[Frobenius automorphism]\index{Frobenius automorphism}
            In the above setting, $Gal(\bar{T}/\bar{R})$ is generated by $\bar{\sigma}_0: x\mapsto x^{n_p}$, where $n_p=|\bar{R}|$ (\ref{5.5.6}). Since $Gal(W/k)\cong Gal(\bar{T}/\bar{R})$ via the map $\sigma\mapsto \bar{\sigma}$ (\ref{5.5.8}). The element $\sigma_0\in Gal(W/k)$ which maps to $\bar{\sigma}_0$ generates $Gal(W/k)$. This $\sigma_0$ is called the \emph{Frobenius automorphism} of $W/k$.
        \end{definition}
    
        Now we want to derive an existence and uniqueness theorem for unramified extensions. \ref{5.5.7} already shows that such unramified extension exist. We want to show once the degree of extension $n=[W:k]$ is fixed, the unramified $W$ is unique up to isomorphism. First we need an analogue of "Eisenstein's criterion".
        
        Let $\Lambda_k=\{\xi_1,\cdots,\xi_{n_p}\}$ be the collection of distinct roots of $x^{n_p}=x$ in $R$ as before. Let $\pi$ be the generator of $P=\pi R$. Then $|\pi|$ generates $|k^\times|$. Let $\pi_k$ be the element such that $|\pi_k|=|\pi|^k$. We can choose $\pi_k=\pi^k$.
        
        For every $a\in k^\times$, we claim
        \begin{equation}\label{5.5.9.1}
            a=\alpha_{k_1}\pi_{k_1}+\alpha_{k_2}\pi_{k_2}+\cdots
        \end{equation}
        where every $\alpha_i\in \Lambda_k$, $k_1<k_2<\cdots$, and $\alpha_k\neq 0$. Let $k_1$ be such that $|a|=|\pi|^{k_1}=|\pi_{k_1}|$. Then $a\pi_{k_1}^{-1}\in R\setminus P$. There exist $\alpha_{k_1}\neq 0$ in $\Lambda^k$ such that $a\pi_{k_1}^{-1}\equiv \alpha_{k_1}\ mod\ P$ by \ref{5.5.5}. Then $1-a^{-1}\alpha_{k_1}\pi_{k_1}\in P$, $$|a-\alpha_{k_1}\pi_{k_1}|=|a||1-a^{-1}\alpha_{k_1}\pi_{k_1}|<|a|.$$ 
        
        If $a=\alpha_{k_1}\pi_{k_1}$ then it satisfies (\ref{5.5.9.1}). Otherwise we may continue this process with $a-\alpha_{k_1}\pi_{k_1}$ and obtain $k_1<k_2<\cdots$, $\alpha_{k_1},\alpha_{k_2},\cdots$ non-zero in $\Lambda_k$ such that $$|a|>|a-\alpha_{k_1}\pi_{k_1}|>|a-\alpha_{k_1}\pi_{k_1}-\alpha_{k_2}\pi_{k_2}|>\cdots$$
        and obtain (\ref{5.5.9.1}). It is clear that $\alpha$'s are uniquely determined. $a\in R$ if and only if all $k\geq 0$.
        
        \begin{definition}[Eisenstein polynomial]\label{5.5.10}\index{Eisenstein polynomial}
        Let $k_0$ be a subfield of a local field $k$ with
        \begin{enumerate}[(i)]
            \item $\Lambda_k\subseteq k_0$;
            \item $k_0$ is closed in the topology of $k$;
            \item $k_0\cap P\neq 0$.
        \end{enumerate}
        where $P$ is the maximal ideal of the valuation ring $R$ of $k$ in the usual way. Let $P_0=P\cap k_0$, $R_0=R\cap k_0$, then we call the polynomial $$f(x)=x^n+b_1x^{n-1}+\cdots+b_n\in R_0[X]$$ an \emph{Eisenstein polynomial} in $R_0[X]$ if $b_i\in P_0$ for all $i$, $b_n\notin P_0^2$.
        \end{definition}
    
        \begin{remark}
             It is clear that a subfield $k_0$ satisfying $(i)-(iii)$ is a local subfield of the local field $k$.
        \end{remark}
    
        \begin{lemma}\label{5.5.11}
            Let $k_0$ be a subfield of $k$ satisfying three conditions in \ref{5.5.10}. Let $\pi$ be the generator of $P=\pi R$. Then $k=k_0(\pi)$ and $\pi$ is algerbaic over $k_0$ with minimal polynomial an Eisenstein polynomial over $R_0=R\cap k_0$.
        \end{lemma}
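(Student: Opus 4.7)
My plan is to identify the appropriate degree $m$, prove that $\{1,\pi,\dots,\pi^{m-1}\}$ is $k_0$-linearly independent, exhibit an Eisenstein polynomial of degree $m$ annihilating $\pi$, and use closedness of $k_0(\pi)$ in $k$ to conclude $k=k_0(\pi)$. Condition (iii) provides a non-zero element of $k_0\cap P$, so $|k_0^\times|$ is a non-trivial subgroup of the cyclic group $|k^\times|=\langle|\pi|\rangle$. Let $m\geq 1$ be the smallest integer with $|\pi|^m\in|k_0^\times|$ and pick $\pi_0\in k_0^\times$ with $|\pi_0|=|\pi|^m$; then $|k_0^\times|=\langle|\pi|^m\rangle$ and $P_0=\pi_0 R_0$, so $\pi_0$ is a uniformizer of $k_0$, condition (i) together with \ref{5.5.5} forces $\bar R_0=\bar R$, and $R_0$ is a complete local P.I.D.\ by \ref{5.2.5} and (ii). For linear independence, in any relation $\sum_{i=0}^{m-1}c_i\pi^i=0$ with $c_i\in k_0$ not all zero the absolute values $|c_i\pi^i|$ of the non-zero terms lie in pairwise distinct cosets of $|k_0^\times|$ inside $|k^\times|$ by minimality of $m$, so the strict triangle inequality makes the maximum the unique dominant summand, contradicting vanishing.

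To build the Eisenstein polynomial I write $\pi^m=\pi_0 u$ with $u\in R^\times$ and expand, via the representation preceding $(\ref{5.5.9.1})$, $u=\alpha_0+\alpha_1\pi+\alpha_2\pi^2+\cdots$ with $\alpha_i\in\Lambda_k\subseteq R_0$ and $\alpha_0\neq 0$; then $\pi$ is a formal root of $F(T)=-T^m+\sum_{i\geq 0}\pi_0\alpha_i T^i\in R_0[[T]]$. The coefficient of $T^m$ is $-1+\pi_0\alpha_m\in R_0^\times$, while the coefficients of $T^i$ for $i<m$ lie in $P_0$. The Weierstrass preparation theorem for the complete discretely valued ring $R_0$ then factorizes $F=g\cdot h$ with $g(T)=T^m+b_1T^{m-1}+\cdots+b_m\in R_0[T]$ distinguished ($b_i\in P_0$) and $h\in R_0[[T]]^\times$. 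Evaluation at $\pi$, together with $|h(\pi)|=|h(0)|=1$, yields $g(\pi)=0$. Since $|b_m|=|F(0)|/|h(0)|=|\pi_0\alpha_0|=|\pi|^m$, we have $b_m\in P_0\setminus P_0^2$, so $g$ is Eisenstein; combined with the linear independence above, $g$ must be the minimal polynomial of $\pi$ over $k_0$.

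It follows that $[k_0(\pi):k_0]=m<\infty$, so the Cauchy-sequence argument in the proof of \ref{5.3.1} (which only requires a fixed linearly independent spanning set) shows $k_0(\pi)$ is complete, hence closed in $k$. Every $a\in k^\times$ admits a convergent expansion $a=\sum_{i\geq k_1}\alpha_i\pi^i$ with $\alpha_i\in\Lambda_k\subseteq k_0$ (see $(\ref{5.5.9.1})$ and the discussion preceding it); every partial sum lies in $k_0[\pi,\pi^{-1}]\subseteq k_0(\pi)$, so closedness forces $a\in k_0(\pi)$, and hence $k=k_0(\pi)$. The main technical obstacle is the passage from the formal identity $F(\pi)=0$ to the polynomial identity $g(\pi)=0$: the cleanest route is to cite Weierstrass preparation, but an elementary substitute is a Newton-style successive approximation producing $b_i^{(n)}\in P_0$ whose errors $\pi^m+\sum_i b_i^{(n)}\pi^{m-i}$ live in progressively higher powers of $P_0 R$, with the limits $b_i=\lim_n b_i^{(n)}$ existing by completeness of $R_0$.
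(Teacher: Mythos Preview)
Your argument is correct, but the paper takes a more direct and economical route. The paper first regroups the expansion $(\ref{5.5.9.1})$ by writing each $\pi_k$ as $\pi_0^{q}\pi^{r}$ (with $k=eq+r$, $0\le r<e$), thereby expressing every $a\in R$ as $a=\sum_{r=0}^{e-1}a_r\pi^r$ with $a_r=\sum_q\alpha_q\pi_0^q\in R_0$; together with the same value-in-distinct-cosets argument you use for linear independence, this immediately gives that $\{1,\pi,\dots,\pi^{e-1}\}$ is a $k_0$-basis of $k$, so $k=k_0(\pi)$ and the minimal polynomial has degree $e$ without any appeal to Weierstrass preparation. The Eisenstein property is then read off from two tools already in the paper: the constant term satisfies $|b_e|=|N_{k/k_0}(\pi)|=|\pi|^e=|\pi_0|$ by the norm formula of \ref{5.3.1}, and the remaining coefficients lie in $P_0$ because otherwise $\bar f(x)=\bar g_0(x)\,x^j$ with coprime factors, whence Hensel's lemma~\ref{Hensels} would force $f$ to be reducible.

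Your approach reverses the logical order: you first manufacture the annihilating polynomial via Weierstrass preparation (or a Newton iteration) and only afterwards recover $k=k_0(\pi)$ from completeness of the finite extension. This is perfectly valid and makes the Eisenstein polynomial explicit, but it imports machinery external to the paper, whereas the paper's route stays entirely within the norm formula and Hensel's lemma already established. The paper's proof is shorter; yours is more constructive.
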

    
        \begin{proof}
            Let $P_0=P\cap k_0$, $P_0=\pi_0 R_0$, and $P=\pi R$. Then $|\pi_0|=|\pi|^e$ for some $e\geq 1$. Let $k=eq+r$ with $0\leq r<e$. If $\pi_k=\pi_0^q\pi^r$, $|\pi_k|=|\pi|^k$.
            
            Now if $a\in R$ we have $k\geq 0$ in (\ref{5.5.9.1}). Then it can be rewritten as $$a=a_0+a_1\pi_1+\cdots+a_{e-1}\pi_{e-1}$$ where $a_i=\sum_{q\geq 0}\alpha_q\pi_0^q$. Since $k\geq 0$, $a_i\in R_0$. Now $|a_i\pi^i|$ has the form $|\pi|^{qe+i}$, therefore for $i\neq j$, $0\leq i,j<e$, $|a_i\pi^i|\neq |a_j\pi^j|$, so if $\sum a_i\pi^i=0$ then $a_i=0$ for all $i$.
            
            Thus $\{1,\pi, \cdots,\pi^{e-1}\}$ is a basis of $k/k_0$. $k=k_0(\pi)$ and $\pi$ is algebraic, with minimal polynomial of degree $e$, say $f(x)=x^e+b_1x^{e-1}+\cdots+b_e$. Moreover, $N_{k/k_0}(\pi)=\pm b_e$ and $$|b_e|=|N_{k/k_0}(\pi)|=|\pi^e|=|\pi_0|,$$ by \ref{5.3.1}, so $b_e\in P_0\setminus P_0^2$.
            
            Now if $b_i\notin P_0$ for some $i$, then because $b_e\in P_0$, we can take out all $x$'s and write $\bar{f}=\bar{g}_0(x)x^j$ for some $j\geq 1$ and $(\bar{g}_0(x), x^j)=1$. By Hensel's lemma \ref{Hensels}, $f(x)$ is reducible and a contradiction. Hence $b_i\in P_0$ for all $i$. It follows that $f(x)$ is an Eisenstein polynomial. 
        \end{proof}
    
        \begin{theorem}[existence and uniqueness]\label{5.5.12}
            Let $k$ be a local field and $L/k$ be a finite extension. Then $L$ contains a unique maximal unramified subfield $W$ satisfying the following properties.
            \begin{enumerate}[(i)]
                \item $[W:k]=f$, $f$ is the residue degree of $L/k$;
                \item $W$ is a cyclic extension over $k$;
                \item $L$ is completely ramified over $W$ and $[L:W]=e$, $e$ is the ramification index of $L/k$;
                \item If $\Pi$ is chosen such that $Q=\Pi S$, where $Q$ is the maximal ideal of the valuation ring $S$ of $L$, then $L=W(\Pi)$ and the minimal polynomial of $\Pi$ over $W$ is Eisenstein over $T=S\cap W$.
                
            \end{enumerate}
        \end{theorem}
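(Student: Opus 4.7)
The plan is to take $W \defeq k[\Lambda_L]$ from Theorem \ref{5.5.7} and show that this is the required subfield, then derive (i)--(iv), and finally prove uniqueness and maximality by identifying every unramified subextension as a subfield of $W$.

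For (i) and (ii): these are immediate from Theorems \ref{5.5.7} and \ref{5.5.8}, which already state that $W = k[\Lambda_L]$ is unramified with $[W:k] = f$ and that $\mathrm{Gal}(W/k)$ is cyclic (isomorphic to $\mathrm{Gal}(\bar T/\bar R)$, which is cyclic by \ref{5.5.6}). For (iii): since $W/k$ is unramified, $|W^\times| = |k^\times|$, so $e_{L/W} = [|L^\times|:|W^\times|] = [|L^\times|:|k^\times|] = e$. Moreover, by Theorem \ref{5.4.4} applied to $L/k$ we have $ef = n$, and applied to $W/k$ we get $[W:k] = f$, hence $[L:W] = n/f = e$. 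Again by \ref{5.4.4} applied to $L/W$, the residue degree $f_{L/W}$ satisfies $e \cdot f_{L/W} = [L:W] = e$, so $f_{L/W} = 1$ and $L/W$ is completely ramified.

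For (iv): apply Lemma \ref{5.5.11} with the local field $L$ playing the role of $k$ and $W$ playing the role of $k_0$. I must verify the three conditions of Definition \ref{5.5.10}. First, $\Lambda_L \subseteq W$ by the very construction of $W$. Second, $W$ is a finite-dimensional subspace of $L$ over the complete field $k$, hence closed in $L$ by the norm-formula in Theorem \ref{5.3.1}. Third, $W \cap Q$ is the maximal ideal $O$ of the valuation ring $T = S \cap W$, which is non-zero since the valuation on $W$ is non-trivial. Then \ref{5.5.11} yields $L = W(\Pi)$ with $\Pi$ satisfying an Eisenstein polynomial over $T$.

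For uniqueness and maximality: suppose $W' \subseteq L$ is any unramified subfield over $k$ with residue degree $f' = [W':k]$, and let $T'$ be its valuation ring with residue field $\bar{T'} \subseteq \bar S$. Since $\bar S$ is a finite field with $n_q = n_p^f$ elements and $\bar{T'}$ contains $\bar R$ with $[\bar{T'}:\bar R] = f'$, we have $|\bar{T'}| = n_p^{f'}$, and its non-zero elements are precisely the $(n_p^{f'}-1)$-th roots of unity, a subset of $\Lambda_L^\times$ under the isomorphism $\Lambda_L^\times \cong \bar S^\times$ from \ref{5.5.5}. Running the argument of \ref{5.5.7} inside $W'$ shows $W' = k[\Lambda_{W'}] \subseteq k[\Lambda_L] = W$. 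Hence $W$ contains every unramified subextension, establishing both that $W$ is the unique maximal unramified subfield and completing all four properties. The main subtlety will be the third condition of \ref{5.5.10} and the identification of $\Lambda_{W'}$ with a subset of $\Lambda_L$ in the uniqueness argument; these are the places where one genuinely uses that residue fields of finite fields are completely determined by their cardinality.
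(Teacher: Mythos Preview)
Your proposal is correct and follows essentially the same route as the paper: parts (i)--(iii) are derived from \ref{5.5.7}, \ref{5.5.8}, and \ref{5.4.4} exactly as the paper does, and part (iv) is Lemma \ref{5.5.11} applied with $L$ in the role of the ambient local field and $W$ as the subfield $k_0$ (the paper simply cites \ref{5.5.11} without spelling out the three hypotheses of \ref{5.5.10}, so your explicit verification is a small gain in rigor). For uniqueness and maximality, the paper re-runs the Hensel lifting argument from \ref{5.5.7} to show that any unramified $W'\subseteq L$ equals $k(\xi)$ for a lift $\xi$ of a primitive root of $\bar T'^{\times}$, hence is the splitting field of $x^{n_p^{l}}-x$ with $l\mid f$, and then observes that all such roots lie in $\Lambda_L\subseteq W$; your phrasing ``run the argument of \ref{5.5.7} inside $W'$'' is exactly this, and your identification $\Lambda_{W'}\subseteq\Lambda_L$ via $(n_p^{f'}-1)\mid(n_q-1)$ is the same step the paper makes at the end.
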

    
        \begin{proof}
            Theorem \ref{5.5.7} proves the existence of $W$ and $(i)$. Theorem \ref{5.5.8} proves $(ii)$. Since $W$ is unramified, $|W^\times|=|k^\times|$, hence $[|L^\times|:|W^\times|]=e$. By tower law and \ref{5.4.4}, $[L:W]=e$. $(iii)$ is proved. By \ref{5.5.11}, $L=W(\Pi)$ and the minimal polynomial of $\Pi$ in $W[X]$ is an Eisenstein polynomial. $(iv)$ is shown.
            
            To show the uniqueness and maximality, we want to show every unramified extension is a splitting field of $x^{n_p^l}-x$, for $l|f$ (adapted from \cite{pierce1982associative}). Let $W'/k$ be another unramified extension, and $W'$ contains $L$. Let $T'$, $O'$ be its valuation ring and maximal ideal. $\bar{T}'\defeq T'/O'$. Let $l\defeq [W':k]=|Gal(W'/k)|\stackrel{\ref{5.5.8}}{=}|Gal(\bar{T}'/\bar{R})|=[\bar{T}':\bar{R}]$. Let $n_p=|\bar{R}|$ as usual. Then $|\bar{T}'|=n_p^l.$ $\bar{T}'$ is the splitting field of $x^{n_p^l}=x$, similar to the argument in \ref{5.5.7}. Let $\bar{\xi}$ be a primitive $n_p^l$-th root of unity in $\bar{T}'$ so $\bar{T}'=\bar{R}(\bar{\xi})$. Let $\bar{g}(x)$ be a minimal polynomial of $\bar{\xi}$ over $\bar{R}$. Then $x^{n_p^l}-x=\bar{g}(x)\bar{h}(x)$ for some $\bar{h}(x)$ in $\bar{R}(x)$. By Hensel's lemma \ref{Hensels}, it lifts to $x^{n_p^l}-x=g(x)h(x)$ in $R[X]$ with the image in $\bar{R}[X]$ coinciding with $\bar{g}(x)$, $\bar{f}(x)$. By \ref{5.5.3} $\bar{\xi}$ lifts to a root $\xi$ satisfying $x^{n_p^l}-x$. If $g(\xi)\neq 0$ then $h(\xi)=0$. But this implies $\bar{h}(\bar{\xi})=0$ contradicts to the fact that $x^{n_p^l}-x$ is separable. Hence $g(\xi)=0$. $g(x)$ is irreducible because $\bar{g}(x)$ is irreducible in $\bar{R}[X]$. Hence $g(x)$ is the minimial polynomial of $\xi$ in $R[X]$. Therefore $l=[\bar{T}':\bar{R}]=[\bar{R}(\bar{\xi}):\bar{R}]=deg(\bar{g})=deg(g)=[k(\xi):k]\leq [W':k]=l.$
            Hence $W'=k(\xi)$ is the splitting field of $x^{n_p^l}-x$.
            Now $$l=[\bar{T}':\bar{R}]=\frac{[S/Q:\bar{R}]}{[S/Q:\bar{T}']}=\frac{|Gal(S/Q/\bar{R})|}{[S/Q:\bar{T}']}\stackrel{\ref{5.5.8}}{=}\frac{|Gal(W/k)|}{[S/Q:\bar{T}']}\stackrel{(i)}{=}\frac{f}{[S/Q:\bar{T}']}.$$
            Hence $l|f$.
            
            Since $W=k[\Lambda_L]$, every solutions to $x^{n_p^l}-x$ is in $\Lambda_L$. Hence $W'\subseteq W$. $W$ is maximal and $W$ is unique because it is the splitting field of $x^{n_p^f}-x$.
        \end{proof}
    
        \begin{corollary}
            If $k$ is a local field. Then for each $n\in \N$ there is a unique field $W$ such that $[W:k]=n$ and $W$ is unramified.
        \end{corollary}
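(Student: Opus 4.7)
The plan is to prove existence and uniqueness separately. Uniqueness will essentially read off from Theorem \ref{5.5.12}, while existence will be a Hensel-lifting construction.

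For existence, I would exploit that the residue field $\bar{R}$ is finite of order $n_p$, so there is a monic irreducible polynomial $\bar{g}(x)\in\bar{R}[X]$ of degree $n$ (a standard fact for finite fields). Lift $\bar{g}$ coefficient-by-coefficient to a monic $g(x)\in R[X]$ of degree $n$. The first check is that $g$ remains irreducible in $k[X]$: since $R$ is a P.I.D.\ by Theorem \ref{5.2.5}, Gauss's lemma reduces any nontrivial factorization in $k[X]$ to a factorization $g=g_1g_2$ with $g_1,g_2\in R[X]$ monic of positive degree; reducing modulo $P$ then contradicts the irreducibility of $\bar{g}$. Let $\alpha$ be a root of $g$ in an algebraic closure of $k$ and set $W=k(\alpha)$, so $[W:k]=n$.

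Next I would argue $W$ is unramified. Because $g$ is monic over $R$, $\alpha$ is integral over $R$ and so lies in the valuation ring $T$ of $W$. Its image $\bar{\alpha}\in\bar{T}$ is a root of the irreducible polynomial $\bar{g}$, so $[\bar{R}(\bar{\alpha}):\bar{R}]=n$, giving $f\geq n$ for the residue degree of $W/k$. Combining with the inequality $ef\leq n=[W:k]$ from Lemma \ref{5.4.3} forces $e=1$ and $f=n$, i.e.\ $W$ is unramified of degree $n$.

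For uniqueness, suppose $W'/k$ is any unramified extension with $[W':k]=n$. Then the residue degree of $W'/k$ is $n$, and applying Theorem \ref{5.5.12} to $L=W'$ makes $W'$ its own unique maximal unramified subfield. The uniqueness argument in that proof shows such a $W'$ is precisely the splitting field of $x^{n_p^n}-x$ over $k$, and splitting fields of a separable polynomial are unique up to $k$-isomorphism, so $W'\cong W$.

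The main obstacle is the existence step; once one recognises to lift an irreducible polynomial via Hensel/Gauss, the rest is an arithmetic alignment of $e$, $f$, and $n$ via Lemma \ref{5.4.3}, and the key sub-step is verifying $[\bar{R}(\bar{\alpha}):\bar{R}]=n$, which follows directly from the irreducibility of $\bar{g}$. Uniqueness is essentially a corollary of already-established theory.
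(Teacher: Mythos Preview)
Your proof is correct. The uniqueness half matches the paper exactly: both read it off from the splitting-field characterization inside the proof of Theorem~\ref{5.5.12}. For existence, however, you take a genuinely different route. The paper simply points back to the machinery of Theorems~\ref{5.5.7} and~\ref{5.5.12}, where unramified extensions arise as splitting fields of $x^{n_p^n}-x$ and Hensel's lemma is used to show the lifted generator has degree~$n$ over~$k$. You instead lift an \emph{arbitrary} irreducible $\bar g\in\bar R[X]$ of degree~$n$ coefficientwise, invoke Gauss's lemma (via Theorem~\ref{5.2.5}) to preserve irreducibility, and then pin down $e=1$ by the inequality $ef\leq n$ of Lemma~\ref{5.4.3}. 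Your argument is more self-contained and avoids re-running the Hensel step, at the cost of appealing to Gauss's lemma; the paper's route keeps everything inside the Hensel/splitting-field framework already built, but is less explicit about why existence holds for a prescribed~$n$ rather than for the residue degree of an ambient~$L$.
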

        \begin{proof}
             This theorem follows directly from the proof of uniqueness and maximality and $(i)$ in theorem \ref{5.5.12}.
        \end{proof}
        
    \chapter{The Brauer Group of Local Fields}
    \numberwithin{theorem}{chapter}
    
        We are now ready to determine the Brauer group of local fields with the results of all previous chapters. It is mainly based on \cite{jacobson2009}, unless explicitly cited. 
        
        Let $k$ be a local field. We want to establish an isomorphism between $k^\times/N(W^\times)$ and $Br(W/k)$, where $W$ is an unramified extension of $k$.
        
    \begin{lemma}\label{6.1}
        Let $\F_{q^n}/\F_q$ be a finite field extension, with $q^n$ and $q$ elements respectively. Then every $a\in \F_q^\times$ is a norm of an element $b\in \F_{q^n}^\times$ that is not contained in any proper subfield of $\F_{q^n}$.
    \end{lemma}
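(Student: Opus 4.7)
The plan is to combine the explicit form of the norm on finite fields with a cardinality comparison between the fibres of $N_{\F_{q^n}/\F_q}$ and the union of the proper subfields. Since $\F_{q^n}^\times$ is cyclic of order $q^n-1$ and $\mathrm{Gal}(\F_{q^n}/\F_q)$ is generated by the Frobenius $x\mapsto x^q$, Lemma \ref{4.2.3}(iv) gives the explicit formula
\[
N_{\F_{q^n}/\F_q}(b)=b\cdot b^{q}\cdots b^{q^{n-1}}=b^{(q^n-1)/(q-1)}.
\]
This is a homomorphism from the cyclic group $\F_{q^n}^\times$ of order $q^n-1$ to $\F_q^\times$ of order $q-1$, and the exponent $(q^n-1)/(q-1)$ is precisely the index, so the norm is surjective and every fibre $N_{\F_{q^n}/\F_q}^{-1}(a)$ has size $(q^n-1)/(q-1)$.

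It then suffices to show that each fibre meets the set of elements of $\F_{q^n}^\times$ lying in no proper subfield. I would argue by overestimating the complement: every proper subfield of $\F_{q^n}$ is of the form $\F_{q^d}$ with $d\mid n$, $d<n$, and each such subfield is contained in a maximal one $\F_{q^{n/p}}$ for some prime $p\mid n$. A union bound therefore gives
\[
\Bigl|\bigcup_{\substack{d\mid n\\ d<n}}\F_{q^d}^\times\Bigr|\;\le\;\sum_{\substack{p\mid n\\ p\text{ prime}}}(q^{n/p}-1)\;<\;\sum_{\substack{p\mid n\\ p\text{ prime}}}q^{n/p}.
\]

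The remaining step is the elementary inequality
\[
\frac{q^n-1}{q-1}=1+q+q^2+\cdots+q^{n-1}\;>\;\sum_{\substack{p\mid n\\ p\text{ prime}}}q^{n/p}.
\]
The exponents $n/p$ on the right are distinct (since $p$ is determined by $n/p$) and each lies in $\{1,\ldots,n/2\}\subseteq\{1,\ldots,n-1\}$, so the right-hand side is majorised by $q+q^2+\cdots+q^{n-1}$; the extra $1$ on the left then gives strict inequality. Combining this with the previous bound, the fibre $N_{\F_{q^n}/\F_q}^{-1}(a)$ has strictly more elements than can fit into the union of proper subfields, so some $b$ in the fibre lies outside every $\F_{q^d}$ with $d\mid n$, $d<n$. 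The degenerate case $n=1$ is vacuous, since there are no such $d$ and one may take $b=a$.

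I do not expect a serious obstacle: the argument rests on the cyclic structure of $\F_{q^n}^\times$ and a geometric–series inequality that is loose enough to hold uniformly in $n$ and $q$. The only subtlety worth checking carefully is that the maximal proper subfields are exactly the $\F_{q^{n/p}}$ for primes $p\mid n$ and that the corresponding exponents $n/p$ are all distinct, both of which are immediate.
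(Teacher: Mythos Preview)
Your proposal is correct and follows essentially the same approach as the paper: compute the norm via the Frobenius to see that each fibre has size $(q^n-1)/(q-1)$, then bound the union of proper subfields by summing $q^{m}-1$ over maximal proper divisors $m$ of $n$ (equivalently $m=n/p$ for primes $p\mid n$) and compare. The only difference is that you spell out the geometric-series inequality explicitly, whereas the paper simply asserts $T<(q^n-1)/(q-1)$ without justification; your version is the more complete one.
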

    
    \begin{proof}
        The Frobenius automorphism $\sigma: x\mapsto x^q$ generates $G=Gal(\F_{q^n}/\F_q)$ (\ref{5.5.6}). Hence the norm is $N_{\F_{q^n}/\F_q}(b)=\prod_{\sigma\in G}\sigma(b)=bb^q\cdots b^{q^{n-1}}=b^{(q^n-1)/(q-1)}$.
        
        The kernel of the norm map is the set $\{x\in F_{q^n}: x^{(q^n-1)/(q-1)}=1\}$ and it has order $(q^n-1)/(q-1)$ because $\F_{q^n}^\times$ is cyclic. Hence the image has order $q-1=|\F_{q^n}^\times|$. The norm map is surjective. Then for any $a\in \F_q^\times$, it is the norm of $(q^n-1)/(q-1)$ elements in $\F_{q^n}^\times$.
        
        If $b$ contains in some proper subfields of $\F_{q^n}$, $b$ is in some maximal proper subfields, which has cardinality of $q^m$ where $m$ is a maximal proper divisor of $n$. Since $\F_{q^n}^\times$ is cyclic, for every $m$ which is a maximal proper divisor of $n$, there exist a unique maximal subfield. Therefore at most $T=\sum (q^m-1)$ elements in the maximal subfields, where the sum is over all $m$ that are maximal proper divisors of $n$.
        
        But definitely $T<\frac{q^n-1}{q-1}$, therefore there is a $b$, not contained in any subfield, with its norm value $a$.
    \end{proof}
    
    \begin{remark}
        If $b$ is not in any maximal subfield of $\F_{q^n}$, then $\F_{q^n}=\F_q(b)$, and the minimal polynomial of $b$ in $\F_q(b)$ has order $n$.
    \end{remark}
        
    \begin{lemma}\label{6.2}
        Let $W/k$ be a finite, unramified extension of local field. Let $T$, $O$ be valuation ring and its maximal ideal of $W$. $R$, $P$ be the valuation ring, maximal ideal of $k$. Then any element $a\in R\setminus P$ is a norm in $W$. 
    \end{lemma}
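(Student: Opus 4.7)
The plan is to prove this by successive approximation (Hensel-style lifting), with the base case supplied by Lemma \ref{6.1} applied to the residue extension $\bar{T}/\bar{R}$. Since $W/k$ is unramified, a uniformizer $\pi\in R$ with $P=\pi R$ also satisfies $O=\pi T$, and the residue extension $\bar{T}/\bar{R}=\F_{q^f}/\F_q$ is a Galois (cyclic) extension of finite fields of degree $f=[W:k]$ (Lemma \ref{5.5.6}). Let $G=Gal(W/k)$, which by Theorem \ref{5.5.8} maps isomorphically onto $Gal(\bar{T}/\bar{R})$ via $\sigma\mapsto\bar{\sigma}$.

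First, I would establish the base case. Reduce $a$ to $\bar{a}\in\bar{R}^\times$. By Lemma \ref{6.1}, there exists $\bar{b}_0\in\bar{T}^\times$ with $N_{\bar{T}/\bar{R}}(\bar{b}_0)=\bar{a}$. Lift $\bar{b}_0$ to any $b_0\in T^\times$; because Galois conjugation on $W$ reduces to Galois conjugation on $\bar{T}$, one has $\overline{N_{W/k}(b_0)}=\prod_{\sigma}\overline{\sigma(b_0)}=\prod_{\bar\sigma}\bar\sigma(\bar b_0)=\bar a$, so $N_{W/k}(b_0)=a(1+\pi v_0)$ for some $v_0\in R$.

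Next, the inductive step. Assume I have found $b_n\in T^\times$ with $N_{W/k}(b_n)=a(1+\pi^{n+1}v_n)$ for some $v_n\in R$. Seek a correction of the form $b_{n+1}=b_n(1+\pi^{n+1}d_n)$ with $d_n\in T$. Expanding,
\[
N_{W/k}(1+\pi^{n+1}d_n)=\prod_{\sigma\in G}\bigl(1+\pi^{n+1}\sigma(d_n)\bigr)\equiv 1+\pi^{n+1}\mathrm{Tr}_{W/k}(d_n)\pmod{\pi^{n+2}},
\]
so $N_{W/k}(b_{n+1})\equiv a\bigl(1+\pi^{n+1}(v_n+\mathrm{Tr}_{W/k}(d_n))\bigr)\pmod{\pi^{n+2}}$. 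It suffices to choose $d_n\in T$ with $\mathrm{Tr}_{W/k}(d_n)\equiv -v_n\pmod{\pi}$; reducing modulo $P$ this becomes $\mathrm{Tr}_{\bar T/\bar R}(\bar d_n)=-\bar v_n$ in $\bar R$. Since $\bar T/\bar R$ is a finite separable extension of finite fields, its trace form is non-degenerate and the trace map is surjective, so such a $\bar d_n$, and hence $d_n$, exists.

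Finally, I would conclude by completeness. The sequence $(b_n)$ is Cauchy because $b_{n+1}-b_n=b_n\pi^{n+1}d_n\to 0$, so by completeness of $W$ (Theorem \ref{5.3.1}) it converges to some $b\in T$; since $|b_n|=|b_0|=1$ throughout, the limit $b$ lies in $T^\times$. The norm $N_{W/k}$ is continuous (it is a polynomial expression in the coordinates of a $k$-basis of $W$), so $N_{W/k}(b)=\lim_n N_{W/k}(b_n)=a$, completing the proof. The main obstacle is the inductive step, specifically producing a usable base point $b_0$ via Lemma \ref{6.1} and verifying the surjectivity of the residue-field trace; once those are in place the $\pi$-adic expansion and completeness of $W$ (which relies crucially on $W/k$ being unramified so that $\pi$ remains a uniformizer in $W$) make the rest routine.
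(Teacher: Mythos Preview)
Your proof is correct and takes a genuinely different route from the paper. The paper exploits the full strength of Lemma~\ref{6.1}: it picks $\bar b\in\bar T$ with $N_{\bar T/\bar R}(\bar b)=\bar a$ \emph{and} $\bar T=\bar R(\bar b)$, so that the minimal polynomial $\bar f$ of $\bar b$ over $\bar R$ has degree $n$ and hence coincides with its characteristic polynomial, forcing its constant term to be $(-1)^n\bar a$. One then lifts $\bar f$ to a monic $f\in R[X]$ chosen with constant term exactly $(-1)^n a$, applies Hensel (Corollary~\ref{5.5.3}) in $T$ to produce a root $b\in T$, and observes that since $\bar f$ is irreducible so is $f$; thus $f$ is the minimal (and characteristic) polynomial of $b$ over $k$, giving $N_{W/k}(b)=a$ on the nose in a single step. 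Your argument, by contrast, uses only the surjectivity half of Lemma~\ref{6.1} to get an approximation $N_{W/k}(b_0)\equiv a\pmod{\pi}$, then iteratively corrects via the first-order expansion $N(1+\pi^{m}d)\equiv 1+\pi^{m}\mathrm{Tr}(d)\pmod{\pi^{m+1}}$ together with surjectivity of the residue trace, and finishes by completeness. The paper's approach is a clean one-shot argument but leans on the ``generator'' refinement in Lemma~\ref{6.1} and on the norm/constant-term identity from Lemma~\ref{4.2.3}; your successive-approximation method is the standard one found in, e.g., Serre's \emph{Local Fields}, is more self-contained, and immediately generalizes to show that every element of $1+P$ (indeed of $1+P^m$) is a norm from $W$.
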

    
     \begin{align*}
        &W\geq T\idealinv O& \quad &\bar{T}=T/O&\\
        &|\\
        &k\geq R\idealinv P& \quad &\bar{R}=R/P&\\
    \end{align*}
    
    \begin{proof}
      Let $\bar{T}=T/O$, $\bar{R}=R/P$. $W$ is unramified so $[W:k]=n=[\bar{T}:\bar{R}]$. By lemma \ref{6.1}, every $\bar{a}\in \bar{R}$, $\bar{a}=N_{\bar{T}/\bar{R}}(\bar{b})$, for $\bar{b}\in \bar{T}$ such that the minimal polynomial of $\bar{b}$ over $\bar{R}$ has degree $n$. By \ref{5.5.8}, $Gal(W/k)\cong Gal(\bar{T}/\bar{R})$ by $\sigma\mapsto \bar{\sigma}$, $\bar{\sigma}:\bar{x}\mapsto\overline{\sigma x}$, $x\in T$. Hence for any $x\in T$, the norm $\overline{N_{W/k}(x)}=N_{\bar{T}/\bar{R}}(\bar{x})$. Now choose $b\in T$ such that its image in $\bar{T}$ is $\bar{b}$, that means $\overline{N_{W/k}(b)}=N_{\bar{T}/\bar{R}}(\bar{b})=\bar{a}$. Also for the minimal polynomial $\bar{f}(x)$ of $\bar{b}$ in $\bar{R}[X]$, it lifts to some polynomial $f(x)$ in $R[X]$. Since $deg(\bar{f)}=n$ it is also the characteristic polynomial of $\bar{b}$ in its regular matrix representation (\ref{4.2.3}), and the constant term is $(-1)^n N_{\bar{T}/\bar{R}}(\bar{b})=(-1)^n \bar{a}$. 
      
      Since $\bar{f}(x)$ is separable, by corollary \ref{5.5.3}, $\bar{b}$ lifts to a root $b$ such that its image in $\bar{R}$ is $\bar{b}$. Hence we may assume the constant term of $f(x)$ to be $(-1)^n a$. Since $\bar{f}(x)$ is irreducible in $\bar{R}[X]$, $f(x)$ is irreducible in $R[X]$, hence in $k[X]$, so $f(x)$ is the minimal polynomial of $b$ in $k[X]$. 
      
      Moreover its degree is $n$, so does the characteristic polynomial of regular matrix representation of $a$. Its constant term is $(-1)^na=(-1)^n N_{W/k}(b)$. Hence $a=N_{W/k}(b)$.
    \end{proof}

    \begin{theorem}
        In the setting of \ref{6.2}, for $W$ unramified, $k^\times/N(W^\times)\cong Br(W/k)$ by $\pi^sN(W^\times)\mapsto[(\sigma,w, \pi^s)]$ where $0\leq s<n$, $\pi\in P-P^2$, $|\pi|$ generates $|W^\times|=|k^\times|$, and $\sigma$ the Frobenius automorphism of $W/k$. Both are cyclic groups of order $n\defeq[W:k]$.
    \end{theorem}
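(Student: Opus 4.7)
The plan is to reduce to Theorem \ref{4.2.4}, which already supplies the isomorphism $k^\times/N(W^\times)\cong Br(W/k)$ via $rN(W^\times)\mapsto[(\sigma,W,r)]$, valid whenever $W/k$ is cyclic Galois with $Gal(W/k)=\langle\sigma\rangle$. By Theorem \ref{5.5.8} this hypothesis holds here with $\sigma$ the Frobenius automorphism. What remains is to analyse the quotient $k^\times/N(W^\times)$ directly and show that it is cyclic of order $n$, generated by the class of $\pi$; the cyclic structure then transports through the isomorphism to give the stated description of $Br(W/k)$.

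First I would reduce arbitrary representatives to powers of $\pi$. Since $R$ is a local P.I.D.\ (Theorem \ref{5.2.5}), every $a\in k^\times$ factors uniquely as $a=u\pi^s$ with $u\in R\setminus P$ and $s\in\Z$. By Lemma \ref{6.2}, every such unit $u$ lies in $N(W^\times)$, so $aN(W^\times)=\pi^s N(W^\times)$, and therefore the class $\pi N(W^\times)$ generates the quotient.

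Next I would pin down the order of $\pi N(W^\times)$. Since $\pi\in k$ is fixed by every element of $Gal(W/k)$, Lemma \ref{4.2.3}(iv) gives $N_{W/k}(\pi)=\pi^n$, so $\pi^n\in N(W^\times)$. Conversely, suppose $\pi^s=N_{W/k}(b)$ for some $b\in W^\times$. By Theorem \ref{5.3.1} applied to $W/k$ we have $|\pi|^s=|N_{W/k}(b)|=|b|^n$. Since $W/k$ is unramified, $|W^\times|=|k^\times|$, an infinite cyclic group generated by $|\pi|$, so $|b|=|\pi|^t$ for some $t\in\Z$. This forces $s=tn$, i.e.\ $n\mid s$. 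Hence $\pi N(W^\times)$ has order exactly $n$, and $k^\times/N(W^\times)$ is cyclic of order $n$.

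The main obstacle (surjectivity of the norm on units) has already been discharged by Lemma \ref{6.2}; the remaining descent $n\mid s$ is essentially forced by the unramifiedness of $W/k$, which locks $|b|$ into the value group of $k$. Applying Theorem \ref{4.2.4} with $r=\pi^s$ then packages everything into the stated isomorphism, and transports the cyclic structure of order $n$ to $Br(W/k)$, generated by $[(\sigma,W,\pi)]$.
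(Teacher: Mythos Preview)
Your proposal is correct and follows essentially the same approach as the paper: both arguments invoke Theorem \ref{4.2.4} for the isomorphism, use Lemma \ref{6.2} to kill the unit part, and then use the valuation formula from Theorem \ref{5.3.1} together with unramifiedness to see that $\pi^s$ is a norm if and only if $n\mid s$. The only cosmetic difference is that the paper computes $N(W^\times)$ explicitly as $\{u\pi^{ns}:u\in R\setminus P,\ s\in\Z\}$ and reads off the quotient, whereas you argue directly that $\pi N(W^\times)$ generates and has order $n$; these are two phrasings of the same computation.
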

    
    \begin{proof}
    Since $W$ is unramified over $k$, $|W^\times|=|k^\times|$. We can choose $\pi\in k^\times$ such that $\pi\in O-O^2$ and $|\pi|$ generates $|W^\times|$ as in \ref{5.2.5}. Then any element $w\in W$ is of the form $w=u\pi^s$ for some unit $u\in T\setminus O$, $s\in \Z$. Then $N_{W/k}(w)=N_{W/k}(u)\pi^{ns}$, $n=[W:k]$, and $N_{W/k}(u)\in R\setminus P$ by the extension of valuation formula in $\ref{5.3.1}$.
    
    Conversely if $v\in k$, $v=u'\pi^{ns}$ where $u'\in R\setminus P$. By \ref{6.2}, $u'$ is a norm in $W$, and $\pi^{ns}=N_{W/L}(\pi^s)$ so $v$ is a norm. Therefore the elements of $N(W^\times)$ is of the form $v=u\pi^{ns}$ for some unit $u$, $s\in \Z$. 
    
    Therefore $k^\times/N(W^\times)$ is a cyclic group of order $n$ with elements $\pi^sN(W^\times)$, $0\leq s<n$. Hence $Br(W/k)$ is cyclic of order $n$.
    
    The previous isomorphism now becomes $\pi^sN(W^\times)\mapsto[(\sigma,W,\pi^s)]$, where $0\leq s<n$, $\sigma$ is the generator of $Gal(W/k)$, which is the Frobenius automorphism.
    \end{proof}
    
    By a topological argument, one has an important result which we will not prove here.
    
    \begin{theorem}[Theorem 9.21, p.607 \cite{jacobson2009}]\label{6.4}
        Let $D$ be a finite dimensional central division algebra over a local field $k$. Then $D$ is a cyclic algebra $D=(\sigma, W, \gamma)$ over $k$ where $W$ is unramified and $\gamma$ is a generator of maximal ideal of the valuation ring $R$ of $k$.
    \end{theorem}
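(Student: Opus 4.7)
The plan is to combine the classification of $Br(W/k)$ for unramified $W$, established just above, with the topological input that every finite dimensional central simple algebra over $k$ admits an unramified splitting field; then to rescale the multiplicative structure so the defining element $\gamma$ becomes a uniformizer of $k$.

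First I would invoke the topological fact of Jacobson (Theorem 9.21 of \cite{jacobson2009}) that $D$ is split by some unramified extension of $k$, and pick out the unique unramified $W/k$ of degree $n \defeq \deg D$, whose existence and uniqueness follow from the corollary to Theorem \ref{5.5.12}. Granting that this particular $W$ splits $D$, the preceding theorem identifies $Br(W/k)$ with the cyclic group $k^\times/N(W^\times)$ of order $n$ via $\pi^s N(W^\times) \mapsto [(\sigma, W, \pi^s)]$, where $\sigma$ is the Frobenius of $W/k$ and $\pi$ is a uniformizer of $k$. Hence $[D] = [(\sigma, W, \pi^s)]$ for some $0 \le s < n$, and since both algebras have $k$-dimension $n^2$, Theorem \ref{1.9} upgrades the similarity to an isomorphism $D \cong (\sigma, W, \pi^s)$.

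Next I would argue $\gcd(s, n) = 1$. Otherwise, with $g = \gcd(s, n) > 1$ and $W_0 \subset W$ the unique unramified subextension of degree $n/g$, Corollary \ref{4.2.6} gives $(\sigma|_{W_0}, W_0, \pi^{s/g}) \sim (\sigma, W, \pi^s) \sim D$, so $D$ would be similar to an algebra of dimension $(n/g)^2 < n^2$, contradicting that $D$ is itself a division algebra of dimension $n^2$. With coprimality in hand, let $s'$ invert $s$ modulo $n$, write $ss' = 1 + nk$ for some $k \in \Z$, and set $\tau = \sigma^{s'}$, another generator of $Gal(W/k)$. In the presentation $(\sigma, W, \pi^s) = \bigoplus_{i=0}^{n-1} W u^i$ with $ua = \sigma(a)u$ and $u^n = \pi^s$, I would introduce $v \defeq \pi^{-k} u^{s'}$; since $\pi^{-k} \in k$ is $\sigma$-fixed it commutes with $u^{s'}$, and a direct computation gives $va = \tau(a) v$ together with $v^n = \pi^{-nk}(u^n)^{s'} = \pi^{-nk}\pi^{ss'} = \pi^{-nk}\pi^{1+nk} = \pi$, yielding $D \cong (\tau, W, \pi)$ as required.

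The hard part is the topological input in the first step: without the a priori existence of an unramified splitting field (and its reduction to the extension of degree exactly $\deg D$), the purely algebraic machinery developed in the excerpt cannot pin the cyclic structure of $D$ to an unramified $W$. Once that ingredient is granted, the rest reduces to a dictionary lookup via the $Br(W/k) \cong k^\times/N(W^\times)$ isomorphism, a dimension count to exclude non-coprime exponents, and the one-line basis change $u \mapsto \pi^{-k} u^{s'}$ that converts $\pi^s$ into the uniformizer $\pi$.
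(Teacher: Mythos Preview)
The paper does not prove this theorem at all: it explicitly writes ``By a topological argument, one has an important result which we will not prove here'' and defers entirely to Jacobson. So there is no paper proof to compare against; your plan is strictly more than what the paper offers.

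Your algebraic reduction is sound. Once $D \cong (\sigma, W, \pi^s)$ with $[W:k]=n$, the coprimality argument via Corollary~\ref{4.2.6} is correct (a proper divisor would make $D$ similar to an algebra of smaller dimension, impossible for a division algebra), and the basis change $v = \pi^{-k}u^{s'}$ with $ss' \equiv 1 \pmod n$ cleanly converts $\pi^s$ to $\pi$; since $\gcd(s',n)=1$ the powers of $v$ still form a $W$-basis.

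Two points to tighten. First, citing Theorem~9.21 of Jacobson for the ``topological fact'' is circular: Theorem~9.21 \emph{is} the statement you are proving. What Jacobson actually establishes by the topological argument is that the absolute value on $k$ extends to $D$, from which one deduces that $D$ contains an \emph{unramified maximal subfield}, i.e.\ an unramified $W$ with $[W:k]=n$. Second, this stronger formulation is exactly what closes the gap you flag with ``granting that this particular $W$ splits $D$'': from the weaker hypothesis that \emph{some} unramified extension splits $D$, it is not automatic that the unramified extension of degree exactly $n$ does (you would need period $=$ index, which over local fields is typically proved as a consequence of this very theorem). If instead you take as the black-box input that $D$ has an unramified maximal subfield, Theorem~\ref{3.1.5}(ii) gives that this $W$ of degree $n$ splits $D$, and the rest of your argument runs unchanged.
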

    
    Now we are ready to determine the Brauer group of a local field by combining all the results together.
    
    \begin{theorem}
    Let $k$ be a local field. Then $Br(k)\cong \Q/\Z$.
    
    \end{theorem}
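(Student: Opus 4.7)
The plan is to present $Br(k)$ as a directed union of the subgroups $Br(W_n/k)$, where $W_n$ is the unique unramified extension of $k$ of degree $n$, and to show that under the natural inclusions $Br(W_n/k)\hookrightarrow Br(W_m/k)$ (for $n\mid m$) the resulting system is exactly the standard system $\tfrac{1}{n}\Z/\Z\hookrightarrow \tfrac{1}{m}\Z/\Z$ whose direct limit is $\Q/\Z$.

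First I would observe that by Theorem \ref{6.4} every finite dimensional central division algebra over $k$ is similar to a cyclic algebra of the form $(\sigma,W,\gamma)$ with $W$ unramified over $k$ and $\gamma$ a uniformizer of $R$. Combined with Corollary \ref{4.1.4}, this yields $Br(k)=\bigcup_{n\geq 1} Br(W_n/k)$, where $W_n$ is the (unique, by \ref{5.5.12} and its corollary) unramified extension of $k$ of degree $n$. The previous theorem identifies each $Br(W_n/k)$ with the cyclic group $k^\times/N(W_n^\times)$ of order $n$, with explicit generator $[(\sigma_n,W_n,\pi)]$, where $\sigma_n$ is the Frobenius automorphism of $W_n/k$ and $\pi$ is a fixed uniformizer of $k$. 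Thus there is an isomorphism $\psi_n:Br(W_n/k)\xrightarrow{\sim}\tfrac{1}{n}\Z/\Z$ sending $[(\sigma_n,W_n,\pi^s)]\mapsto s/n+\Z$.

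The key step is to verify that these isomorphisms are compatible with the inclusions $Br(W_n/k)\subseteq Br(W_m/k)$ whenever $n\mid m$ (which hold because $W_n\subseteq W_m$ forces every splitting field of $W_n$ to split over $W_m$ as well). For this I would apply Corollary \ref{4.2.6} with $E=W_m$ and $L=W_n$: it yields
\[
(\bar{\sigma}_m, W_n, \pi^s)\sim (\sigma_m, W_m, \pi^{s\cdot m/n}),
\]
where $\bar{\sigma}_m$ denotes the restriction of $\sigma_m$ to $W_n$. The point is that $\bar{\sigma}_m$ coincides with the Frobenius $\sigma_n$ of $W_n/k$: both are characterized modulo the maximal ideal by $x\mapsto x^{n_p}$, and the isomorphism $Gal(W/k)\cong Gal(\bar T/\bar R)$ from Theorem \ref{5.5.8} makes the restriction map on Galois groups correspond to the restriction of the residual Frobenius, which maps the Frobenius generator to the Frobenius generator. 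Hence under $\psi_m$ the class $[(\sigma_n,W_n,\pi^s)]$ goes to $(s\cdot m/n)/m+\Z=s/n+\Z$, i.e.\ the square
\begin{equation*}
\begin{tikzcd}
Br(W_n/k) \arrow[r,hook] \arrow[d,"\psi_n"'] & Br(W_m/k) \arrow[d,"\psi_m"] \\
\tfrac{1}{n}\Z/\Z \arrow[r,hook] & \tfrac{1}{m}\Z/\Z
\end{tikzcd}
\end{equation*}
commutes.

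Putting these pieces together, passing to the direct limit over $n$ gives
\[
Br(k)=\bigcup_{n\geq 1} Br(W_n/k)\;\cong\;\bigcup_{n\geq 1}\tfrac{1}{n}\Z/\Z=\Q/\Z,
\]
and the theorem follows. The main obstacle is the compatibility verification in the middle paragraph: it requires tracing through both the explicit cocycle manipulation in Corollary \ref{4.2.6} and the identification of Frobenius elements under restriction provided by Theorem \ref{5.5.8}, while keeping the uniformizer $\pi$ fixed across all the $W_n$ (which is legitimate because each $W_n/k$ is unramified, so $|W_n^\times|=|k^\times|$ and $\pi$ remains a uniformizer in $W_n$).
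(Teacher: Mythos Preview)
Your proposal is correct and takes essentially the same approach as the paper: both use Theorem~\ref{6.4} to write $Br(k)$ as the union of the $Br(W_n/k)$, identify each piece with a cyclic group of order $n$ via the previous theorem, and invoke Corollary~\ref{4.2.6} together with the Frobenius-restriction fact from Theorem~\ref{5.5.8} to handle the passage between different $W_n$. The only difference is packaging: the paper defines a single map $[A]\mapsto k/n+\Z$ and checks well-definedness, surjectivity, and the homomorphism property directly, whereas you phrase the same verifications as compatibility of the isomorphisms $\psi_n$ in a directed system and take the direct limit.
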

    
    \begin{proof}
     By theorem \ref{6.4}, any finite dimensional division algebra $D$ has a unramified extension field $W/k$ as splitting field. Therefore for $[A]\in Br(k)$, $[A]\in Br(W/k)$ for some unramified $W$. $A\sim (\sigma, W, \pi^k)$ for some $0\leq k<n$, $n=[W:k]$ and $\sigma$ is the Frobenius automorphism. Once $W$ is fixed, $k$ is uniquely determined. Now we map $[A]\mapsto r=k/n$, $0\leq k<n$ and wish to show this is well-defined and independent of choice of $W$.
     
     From \ref{5.5.12}, we know $W$ is uniquely determined by $n=[W:k]$ up to isomorphism, and if $W'/k$ is unramified and $[W':k]=m$, then $W'$ can be embeded into $W$ as a subfield if and only if $m|n$.
     
     Now it suffices to show if $W'\subseteq W$ is splitting field of $A$, then the rational number remains the same no matter if it is determined by $W'$ or $W$. Now by a dimension argument we have the Frobenius automorphism $\bar{\sigma}$ of $W'/k$ with order $m$ as we did in \ref{4.2.6}. Using \ref{4.2.6}, $A\sim (\bar{\sigma},W',\pi^{km/n})$ so the rational number determined by $W'$ is $\frac{km}{n}/m=k/n$, which remain the same. The map is well-defined.
     
     To prove surjectivity, for $r=k/n$, with $0\leq r<1$, i.e., $0\leq k<n$, we may take $W$ unramified with $[W:k]=n$. Then $(\sigma,W,\pi^k)$ is central simple with $W$ splitting field and maps into $r$.
     
     If $A$, $B$ are two central simple algebras, we can choose $W$ unramified that is splitting field for $A$, $B$, and let $A\sim (\sigma, W, \pi^{k_1})$, $B\sim (\sigma, W, \pi^{k_2})$ for $0\leq k_1, k_2<n$. $A\otimes B\sim (\sigma, W, \pi^{k_1+k_2})\sim (\sigma, W, \pi^s)$ where $s/n=(k_1+k_2)/n\ mod\ \Z$ and $0\leq s<n$.
     
     It follows that the map $[A]\mapsto k/n+\Z$ is an isomorphism.
    \end{proof}

    \chapter*{Conclusion and Future work}\addcontentsline{toc}{chapter}{Conclusion and Future work}
        This dissertation finishes the first two main steps of determining $Br(\Q)$. It starts by defining the Brauer group of a field, making use of the classical results of finite dimensional central simple algebra. Then by proving the famous Frobenius theorem, we determined $Br(\R)\cong \Z/2\Z$. At the later stage we introduce cyclic algebras, which are concrete algebras we can work on and every finite dimensional central division algebras we focus on are of this form. Finally by introducing valuations, we get more understanding of the finite extensions of the local fields by ramification and then are able to determine the Brauer group of local fields.
        
        One possibility of the future work is, of course, to determine $Br(\Q)$. In order to determine $Br(\Q)$, we need $Br(\R)$ and $Br(k_\nu)$ for local fields $k_\nu$, which we already have the result. In a certain sense the "only" completions (finite and infinite) of $\Q$ are $\R$ and $\Q_p$, so it is necessary to find out $Br(\R)$ and $Br(\Q_p)$ first. By Albert-Brauer-Hasse-Neother theorem \cite{wikiBrauergroup}, for a global field $k$, there is a canonical exact sequence \cite{stackexchangeBrauerGroup}\cite{wiki:albertbrauerhassenoether}
    \begin{equation}\label{0.1}
        0\to Br(k)\to \oplus_\nu Br(k_\nu)\to \Q/\Z\to 0
    \end{equation}
    where $\nu$ is a place (c.f. p.579 \cite{jacobson2009}) of $k$ and $k_\nu$ is the completion of $k$ at $\nu$. Now we know $Br(\R)\cong \Z/2\Z$, $Br(k_\nu)\cong \Q/\Z$, so for $k=\Q$ the exact sequence (\ref{0.1}) becomes 
    \begin{equation}\label{0.2}
        0\to Br(\Q)\to \Z/2\Z \oplus_\nu \Q/\Z\to \Q/\Z\to 0
    \end{equation}
    and have \cite{stackexchangeBrauerGroup}
        $$Br(\Q)=\{(a,x): \ a\in \{0,1/2\},\ x\in \oplus_p \Q/\Z\ and\ a+\sum x_p=0\}.$$
        It is a natural continuation of this dissertation to work out the maps in (\ref{0.2}) and show the above results in details.
        
        Another possible route is to work on applications of this content in a bigger context, the class field theory, and apply it to number theory. For example the proof of the quadratic reciprocity law in number theory. These topics are covered in some texts in algebraic number theory, especially chapters in local and global class field theory, such as chapter VII in \cite{cassels1967algebraic}.

\printindex
\addcontentsline{toc}{chapter}{Index}
\addcontentsline{toc}{chapter}{References}
\renewcommand{\bibname}{References}
\bibliography{refs}        

\begin{thebibliography}{10}

\bibitem{non-comm}
Konstantin Ardakov.
\newblock {Noncommutative Rings}, 2018.
\newblock URL: \url{https://courses.maths.ox.ac.uk/node/view_material/36733}.
  Last visited on 2019/05/30.

\bibitem{artin1944rings}
E~Artin, C~Nesbitt, and R~Thrall.
\newblock Rings with minimum condition. university of michigan.
\newblock {\em Ann Arbor, Michigan}, 1944.

\bibitem{ash2010ANT}
Robert~B Ash.
\newblock {\em A course in algebraic number theory}.
\newblock Courier Corporation, 2010.

\bibitem{cassels1967algebraic}
John William~Scott Cassels and Albrecht Fr{\"o}lich.
\newblock {\em Algebraic number theory: proceedings of an instructional
  conference}.
\newblock Academic Pr, 1967.

\bibitem{drozd2012finite}
Yurj~A Drozd and Vladimir~V Kirichenko.
\newblock {\em Finite dimensional algebras}.
\newblock Springer Science \& Business Media, 2012.

\bibitem{jacobson2009}
Nathan Jacobson.
\newblock {\em Basic algebra II}.
\newblock Dover Publications, 2 edition, 2009.

\bibitem{BAI}
Nathan Jacobson.
\newblock {\em Basic algebra I}.
\newblock Courier Corporation, 2 edition, 2012.

\bibitem{lam1999lectures}
TY~Lam.
\newblock Lectures on rings and modules.
\newblock {\em Graduate Texts in Mathematics}, 189, 1999.

\bibitem{stackexchangeBrauerGroup}
Daniel Miller.
\newblock Brauer group of a field of rational numbers.
\newblock Mathematics Stack Exchange.
\newblock URL:https://math.stackexchange.com/q/591279 (version: 2013-12-03).

\bibitem{pierce1982associative}
Richard~S Pierce.
\newblock Associative algebras, volume 88 of.
\newblock {\em Graduate texts in mathematics}, 1982.

\bibitem{wiki:albertbrauerhassenoether}
{Wikipedia contributors}.
\newblock Albert-brauer-hasse-noether theorem --- {Wikipedia}{,} the free
  encyclopedia.
\newblock
  \url{https://en.wikipedia.org/w/index.php?title=Albert%E2%80%93Brauer%E2%80%93Hasse%E2%80%93Noether_theorem&oldid=782158721},
  2017.
\newblock [Online; accessed 30-August-2019].

\bibitem{wikiBrauergroup}
{Wikipedia contributors}.
\newblock Brauer group --- {Wikipedia}{,} the free encyclopedia.
\newblock
  \url{https://en.wikipedia.org/w/index.php?title=Brauer_group&oldid=912950720},
  2019.
\newblock [Online; accessed 30-August-2019].

\end{thebibliography}
\bibliographystyle{plain}  

\end{document}